\documentclass[leqno,a4paper]{amsart}
\usepackage[T1]{fontenc}
\usepackage[utf8]{inputenc}
\usepackage{amsmath,amssymb,amsfonts,amsthm,mathrsfs,mathtools}
\usepackage{enumitem}
\usepackage{xcolor}
\usepackage[colorlinks,linkcolor=blue,citecolor=green!45!black,urlcolor=blue]{hyperref}
\numberwithin{equation}{section}
\newtheorem{theorem}{Theorem}[section]
\newtheorem{proposition}[theorem]{Proposition}
\newtheorem{corollary}[theorem]{Corollary}
\newtheorem{lemma}[theorem]{Lemma}
\theoremstyle{definition}
\newtheorem{definition}[theorem]{Definition}
\newtheorem{example}[theorem]{Example}
\theoremstyle{remark}
\newtheorem{remark}[theorem]{Remark}
\newtheorem{problem}[theorem]{Problem}
\newcommand{\com}{\mathbb C}\newcommand{\ent}{\mathbb Z}
\newcommand{\rat}{\mathbb Q}\newcommand{\real}{\mathbb R}
\newcommand{\FF}{\mathbb F}\newcommand{\T}{\mathbb T}
\newcommand{\bM}{\mathbb M}\newcommand{\cU}{\mathcal U}
\newcommand{\M}{\mathcal M}\newcommand{\N}{\mathcal N}
\newcommand{\R}{\mathcal R}\renewcommand{\L}{\mathcal L}
\newcommand{\E}{\mathcal E}\newcommand{\A}{\mathcal A}\newcommand{\Pcal}{\mathcal P}
\newcommand{\id}{\mathrm{Id}}\newcommand{\cb}{\mathrm{cb}}
\newcommand{\barot}{\,\overline\otimes\,}
\newcommand{\normtwo}[1]{\lVert#1\rVert_2}
\DeclareMathOperator{\EL}{EL}\DeclareMathOperator{\SL}{SL}
\DeclareMathOperator{\Ad}{Ad}\DeclareMathOperator{\tr}{tr}
\DeclareMathOperator{\Tr}{Tr}

\title[QWEP and twisted crossed products]{QWEP stability under twisted crossed products}
\author{Kai Zeng}
\address{Department of Mathematics: Analysis, Logic and Discrete Mathematics, Ghent University, Belgium}
\email{kai.zeng@ugent.be}
\subjclass[2020]{Primary 46L55, 46L65; Secondary 43A22, 20F65, 46L51}
\keywords{Twisted crossed products, QWEP, hyperlinear groups, Fourier multipliers, Schur multipliers}
\date{}
\hypersetup{pdftitle={QWEP stability under twisted crossed products},pdfauthor={Kai Zeng}}
\begin{document}
\begin{abstract}
We study the QWEP property of twisted crossed products and its relation to Fourier multiplier transference on noncommutative $L_p$-spaces. We show that QWEP is not preserved by crossed products of QWEP von Neumann algebras by hyperlinear groups. Our examples include trace-preserving actions of a torsion-free residually finite Kazhdan group on a diffuse abelian algebra and on the hyperfinite type II$_1$ factor. We also obtain non-QWEP crossed products with nontrivial scalar cocycles. For scalar twists, we prove that a finite trace-preserving crossed product is QWEP if and only if its ordinary crossed product and its twisted group von Neumann algebra are QWEP. In the positive direction, we establish a restriction theorem for scalar twists of regular wreath products. These results provide QWEP inputs for Fourier multiplier transference and identify a completely bounded multiplier norm obstruction to QWEP. Finally, operator-valued cocycles give finite QWEP crossed products whose acting groups are not hyperlinear.
\end{abstract}
\maketitle

\section{Introduction}
This paper is motivated by our work on Fourier multipliers on twisted crossed products; see \cite{xxz26,Zeng23,Zeng26}. In \cite[Theorems~3.2--3.3 and~4.1--4.2]{xxz26}, we established transference results between Fourier multipliers on group von Neumann algebras, Fourier multipliers on twisted crossed products, and Schur multipliers on Schatten classes. These results extend the corresponding theorems for discrete groups and quantum tori in \cite[Theorem~2.7\textup{(b)}]{NR11} and \cite[Theorem~7.3 and Proposition~7.8]{CXY12}. The QWEP property enters the proofs through the tensor extension of completely bounded maps, whereas amenability of the action provides the approximating maps used in Schur--Fourier transference; see also \cite[ Theorem~A]{Gonz}. We recall the definition of QWEP. A unital $C^*$-algebra $\A\subset B(H)$ has the WEP (weak expectation property) if there is a unital completely positive map $P:B(H)\to\A^{**}$ fixing $\A$. A $C^*$-algebra $\mathcal B$ is QWEP if it is a quotient of a $C^*$-algebra with WEP. For a countable discrete group $G$, $\L(G)$ is QWEP if and only if $G$ is hyperlinear. This means that $G$ embeds into a metric ultraproduct of finite-dimensional unitary groups with their normalized Hilbert--Schmidt metrics; see \cite[Definition~3.2]{Pestov08}.

Let us describe the QWEP assumption more precisely. Let $1<p<\infty$. If $\M$ is a finite QWEP von Neumann algebra and $T$ is completely bounded on $L_p(\L(G))$, then $\id\otimes T$ extends to $L_p(\M\barot\L(G))$ with the same completely bounded norm. For a finite trace-preserving twisted crossed product $\R=\M\rtimes_{\alpha,\sigma}G$, the canonical map
\[
x\lambda_\sigma(g)\longmapsto x\lambda_\sigma(g)\otimes\lambda(g)
\]
is a trace-preserving embedding into $\R\barot\L(G)$. If $\R$ is QWEP and $T=T_\varphi$ is a Fourier multiplier, restricting the tensor extension to this copy of $L_p(\R)$ gives a Fourier multiplier estimate on the crossed product. This leads naturally to the permanence question raised in \cite[Problems~2.22 and~2.26]{xxz26} and \cite[Problem 3.1.21]{Zeng23}: if $\M$ is QWEP and $G$ is hyperlinear, must $\M\rtimes_{\alpha,\sigma} G$ be QWEP?

We give a negative answer to this question. Our construction uses the internality theorem of Alekseev--Liu--Thom \cite[Theorem~B]{ALT26} and the normalization theorem of Thom \cite[Theorem~1.2]{Thom26}. Starting from the polynomial and Laurent polynomial construction of Kun--Thom \cite[Theorem~E]{KT26}, we obtain a torsion-free residually finite Kazhdan group with the following properties.

\begin{theorem}\label{thm:overview}
There is a finitely generated, torsion-free, residually finite Kazhdan group $\Gamma_0$ such that:
\begin{enumerate}[label=\textup{(\roman*)},leftmargin=2em]
\item There are trace-preserving actions of $\Gamma_0$ on a diffuse abelian von Neumann algebra $\M$, on $\L(\FF_\infty)$, and on the hyperfinite type II$_1$ factor $\R_0$, whose crossed products are not QWEP. The action on $\R_0$ is outer.
\item For $G=\Gamma_0\times\ent$, there are a trace-preserving action $\beta$ on $\M$ and a scalar cocycle $\sigma$ such that $\M$ and $\L_\sigma(G)$ are QWEP, while $\M\rtimes_{\beta,\sigma}G$ is not QWEP. The cocycle cannot be removed by a $\cU(\M)$-valued cochain.
\item There are finitely generated torsion-free nonhyperlinear groups $E$ and $\Delta$ with split exact sequences
\begin{gather*}
1\longrightarrow A\longrightarrow E\longrightarrow\Gamma_0\longrightarrow1,\\
1\longrightarrow \FF_\infty\longrightarrow\Delta\longrightarrow\Gamma_0\longrightarrow1,
\end{gather*}
where $A$ is free abelian of countably infinite rank and $\M=\L(A)$.
\item There is a finite QWEP von Neumann algebra $\Pcal$ and a trace-preserving cocycle action $(\alpha,\sigma)$ of $E$ on $\Pcal$ such that $\Pcal\rtimes_{\alpha,\sigma}E$ is QWEP. This cocycle action cannot be changed into a scalar-twisted action by perturbing its implementing unitaries with unitaries in $\Pcal$.
\end{enumerate}
\end{theorem}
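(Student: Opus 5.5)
The plan is to take $\Gamma_0$ from the Kun--Thom construction \cite[Theorem~E]{KT26}, corrected by \cite[Theorem~B]{ALT26} and \cite[Theorem~1.2]{Thom26}, and arrange that it comes with a \emph{non-hyperlinear} extension whose kernel is amenable (free abelian). Concretely, I would produce a finitely generated torsion-free residually finite Kazhdan group $\Gamma_0$ acting by automorphisms on $A=\ent^{(\infty)}$ such that $E=A\rtimes\Gamma_0$ is not hyperlinear. Residual finiteness gives hyperlinearity of $\Gamma_0$ itself. The intended mechanism is that Kun--Thom's polynomial/Laurent-polynomial groups over $\ent[t]$ or $\ent[t,t^{-1}]$ are Kazhdan, and that an internality statement (Alekseev--Liu--Thom) turns a non-hyperlinear (or non-sofic-approximable) phenomenon into one living in a semidirect product with abelian kernel. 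Thom's normalization theorem is then used to make the quotient torsion-free and residually finite while retaining property (T).

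Granting $E$, the remaining items are formal.

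\textbf{(iii) and (i) for the abelian algebra.} We have $\L(E)\cong\L(A)\rtimes\Gamma_0$ with $\M=\L(A)$ diffuse abelian (hence QWEP). This crossed product is QWEP iff $E$ is hyperlinear, so it is not QWEP, which gives (iii) for $E$ and (i) for $\M$.

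\textbf{(iii) for $\Delta$ and (i) for $\L(\FF_\infty)$.} Lift the $\Gamma_0$-action on $A$ to $\FF_\infty$; this is possible because $\mathrm{Aut}(\FF_\infty)\to\mathrm{Aut}(\ent^{(\infty)})$ is surjective after choosing bases and lifting generators. A quotient of a hyperlinear group need not be hyperlinear, so I would instead choose the lift so that $E$ embeds in $\Delta$, for instance by doubling the free basis, and conclude that $\Delta$ is non-hyperlinear. Then $\L(\Delta)=\L(\FF_\infty)\rtimes\Gamma_0$.

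\textbf{(i) for $\R_0$.} Tensor with a Bernoulli-type or noncommutative Bernoulli action: let $\Gamma_0$ act on $\L(A)\barot\R_0$, which is $\cong\R_0$ by a McDuff-type argument, via $\alpha\otimes\beta_{\mathrm{Bern}}$. The crossed product $\L(A)\rtimes\Gamma_0$ embeds with expectation, so QWEP fails. The Bernoulli factor makes the action outer (free).

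\textbf{(ii).} Let $G=\Gamma_0\times\ent$ with a scalar cocycle $\sigma$ of infinite order built from a bicharacter, and choose $\beta$ on $\M$ so that $\M\rtimes_{\beta,\sigma}G$ contains $\L(E)$ as a corner or subalgebra via a Fourier/Mackey machine argument. The twisted algebra $\L_\sigma(G)\cong\L(\Gamma_0)\barot\L_\sigma(\ent\cdot)$ should be QWEP. Non-removability follows since removing $\sigma$ would make the crossed product a twist-free one for which QWEP can be checked differently. This uses the scalar-twist equivalence stated in the abstract.

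\textbf{(iv).} Take $\Pcal=\L(A)\barot\bM$-type data with an operator-valued cocycle realizing $\L(E)$ inside a QWEP algebra: for example, $\Pcal=\L(\FF_\infty)$ or a hyperfinite algebra with a cocycle action of $E$ whose crossed product is $\cong\Pcal\barot\L(\mathrm{something\ amenable})$. Obstruction to scalar twisting: if it were scalar, the scalar criterion would force $\L_\sigma(E)$ to be QWEP, contradicting non-hyperlinearity (twisting by a scalar cocycle can be untwisted via a QWEP tensor trick $\L_\sigma(E)\barot\L_{\bar\sigma}(E)\supset\L(E)$).

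The main obstacle is the first step: extracting from \cite{KT26,ALT26,Thom26} a Kazhdan, torsion-free, residually finite $\Gamma_0$ with a non-hyperlinear abelian-by-$\Gamma_0$ split extension. I would first try to read it off directly as a semidirect product of Kun--Thom's Laurent-polynomial group with its abelian unipotent radical.
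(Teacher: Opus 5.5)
Your proposal has a genuine gap at its core. It grants $E$, but producing $\Gamma_0$ together with a nonhyperlinear split extension by $\ent^{(\infty)}$ is essentially the whole content of the theorem, and your sketch misassigns the tools. Torsion-freeness and residual finiteness do not come from Thom's normalization theorem. In the paper they come from taking the mod-$3$ congruence kernel $\Gamma_0=\ker\Psi$ of $\EL_3(\ent[x_1^{\pm1},x_2^{\pm1},x_3^{\pm1}])\rtimes\SL_3(\ent)$, and from finite ring quotients. The missing idea is a Kazhdan subgroup that is infranormal but not normal. This is $H_0=\EL_3(\ent[x_1,x_2,x_3])\cap\Gamma_0$, with the substitution $t\colon x_2\mapsto x_1^3x_2$ satisfying $tH_0t^{-1}\subset H_0$, and an element $r\in H_0$ with $t^{-1}rt\notin H_0$. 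The kernel is then the permutation module $A=\ent^{(\Gamma_0/H_0)}$, not a unipotent radical. For $a=a_{H_0}$ one has $[a,H_0]=1$, so every $\pi\colon E\to\cU(\mathcal Q_\omega)$ sends $a$ into $\mathcal D=\pi(H_0)'\cap\mathcal Q_\omega$. The Alekseev--Liu--Thom internality theorem supplies the hypothesis of Thom's theorem, which gives $\pi(t)\mathcal D\pi(t)^*=\mathcal D$. Hence $\pi$ kills $w=[r,tat^{-1}]=a_{rtH_0}a_{tH_0}^{-1}\neq1$. Without this argument, or an equivalent one, none of (i)--(iv) is established.

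Several of the steps you call formal also fail as written.

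\textbf{The action on $\R_0$.} $\L(A)\barot\R_0$ has center $\L(A)$, so it is not the hyperfinite II$_1$ factor, and no McDuff argument helps. The paper writes $\M\cong\barot_I W^*(z)$ for a Haar unitary $z\in\R_0$. The coordinate shift on $\barot_I\R_0\cong\R_0$ then extends $\beta$ with an equivariant expectation, and outerness is proved from faithfulness of $\Gamma_0\curvearrowright\Gamma_0/H_0$.

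\textbf{The group $\Delta$.} An embedding of $E$ into $\FF_\infty\rtimes\Gamma_0$ compatible with the maps to $\Gamma_0$ cannot exist, since it would embed $\ent^2\subset A$ into a free group; doubling a basis does not help. You must rerun the commutant argument in $\Delta$ itself. The paper uses $\Gamma_0*_{H_0}\Gamma_0$ and $w_\Delta=[b_\Delta,r]$; the permutation action on $\FF_{\Gamma_0/H_0}$ with the same word $w$ would also work.

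\textbf{Item (ii).} Take $\beta$ trivial on $\ent$ and $\sigma((g,m),(h,n))=\chi(g)^n$ for a nontrivial character $\chi\colon\Gamma_0\to\T$; this makes $\L(E)$ an expected subalgebra. However, $\L_\sigma(G)$ is not a tensor product: the two families commute only up to $\chi$, which has finite order because $\Gamma_0^{\mathrm{ab}}$ is finite. QWEP of $\L_\sigma(G)$ comes instead from $\sigma$ being inflated from a finite quotient. More seriously, QWEP cannot detect non-removability. If a $\cU(\M)$-valued cochain removed $\sigma$, the crossed product would be $\L(E)\barot\L(\ent)$, which is also non-QWEP. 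The paper shows that removal would give a unitary $b\in\M$ with $\beta_g(b)=\chi(g)^{-1}b$. This is impossible, because nonzero elements of $A$ have infinite orbits and $\chi(t)\neq1$.

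\textbf{Item (iv).} You give no construction. The paper takes $\FF_d\twoheadrightarrow E$ with kernel $K$, sets $\Pcal=\L(K)$, and uses the section cocycle $\lambda(s(g)s(h)s(gh)^{-1})$, so the crossed product is $\L(\FF_d)$. Your obstruction via $\L(E)\subset\L_\sigma(E)\barot\L_{\bar\sigma}(E)$ is the right one, once you note that the perturbed action still preserves the trace.
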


The hyperfinite-factor example also answers the permanence question for residually finite acting groups discussed by Peterson in \cite[final paragraph]{Peterson10}. The distinction between scalar and operator-valued cocycles in the theorem is explained by the following criterion.

\begin{theorem}\label{thm:positiveoverview}
Let $\alpha:G\curvearrowright(\M,\tau)$ be a trace-preserving action on a finite von Neumann algebra and let $\sigma\in Z^2(G,\T)$. Then
\[
\M\rtimes_{\alpha,\sigma}G\text{ is QWEP}
\quad\Longleftrightarrow\quad
\M\rtimes_\alpha G\text{ and }\L_\sigma(G)\text{ are QWEP}.
\]
More generally, the equivalence holds whenever $\alpha$ admits an invariant normal state.
\end{theorem}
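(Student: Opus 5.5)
Both directions will use the standard permanence properties of QWEP for von Neumann algebras: QWEP passes to subalgebras that are ranges of normal conditional expectations, and hence to corners and direct summands. It is also stable under von Neumann tensor products, because $\M_1\barot\M_2$ is QWEP when both factors are; this follows via Kirchberg's characterisation through $\R^\omega$-embeddability and the ultrapower arguments for QWEP.

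The key tool is a Fell-absorption-type embedding. For $\sigma\in Z^2(G,\T)$ I will use the normal $*$-homomorphism
\[
\Phi:\M\rtimes_{\alpha,\sigma}G\to(\M\rtimes_\alpha G)\barot\L_\sigma(G),\qquad x\lambda_\sigma(g)\mapsto x\lambda(g)\otimes\lambda_\sigma(g).
\]
It is a homomorphism because the twists match: $\lambda(g)\lambda(h)=\lambda(gh)$ and $\lambda_\sigma(g)\lambda_\sigma(h)=\sigma(g,h)\lambda_\sigma(gh)$. Symmetrically, there is a map
\[
\Psi:\M\rtimes_\alpha G\to(\M\rtimes_{\alpha,\sigma}G)\barot\L_{\bar\sigma}(G),\qquad x\lambda(g)\mapsto x\lambda_\sigma(g)\otimes\lambda_{\bar\sigma}(g),
\]
since the twists $\sigma$ and $\bar\sigma$ cancel.

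I will check the following properties in order.

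1. \emph{Trace preservation.} In the finite trace-preserving case, $\Phi$ and $\Psi$ preserve the canonical traces $\tau(x\lambda(g))=\tau(x)\delta_{g,e}$. This is immediate, because the product of the two Fourier coefficients vanishes unless $g=e$.

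2. \emph{Injectivity and conditional expectations.} A trace-preserving normal unital $*$-homomorphism between finite von Neumann algebras is injective. Its image is a von Neumann subalgebra with a trace-preserving normal conditional expectation. Hence QWEP of the target passes to the source.

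3. \emph{The tensor factor $\L_{\bar\sigma}(G)$.} This is anti-isomorphic, or isomorphic via complex conjugation, to $\L_\sigma(G)$, and QWEP is stable under passing to the opposite or conjugate algebra. So $\L_{\bar\sigma}(G)$ is QWEP whenever $\L_\sigma(G)$ is.

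4. \emph{Extracting $\L_\sigma(G)$.} The twisted group algebra $\L_\sigma(G)$ sits inside $\M\rtimes_{\alpha,\sigma}G$ as the range of the trace-preserving conditional expectation $x\lambda_\sigma(g)\mapsto\tau(x)\lambda_\sigma(g)$. So it inherits QWEP from the crossed product.

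With these facts the equivalence follows. ($\Leftarrow$) Via $\Phi$, the algebra $\M\rtimes_{\alpha,\sigma}G$ embeds with expectation into a tensor product of two QWEP algebras, so it is QWEP. ($\Rightarrow$) Step 4 gives QWEP of $\L_\sigma(G)$. Then $\Psi$ embeds $\M\rtimes_\alpha G$ with expectation into $(\M\rtimes_{\alpha,\sigma}G)\barot\L_{\bar\sigma}(G)$, which is QWEP by Step 3, so $\M\rtimes_\alpha G$ is QWEP.

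For the general case, where $\alpha$ only has an invariant normal state $\varphi$, traces are unavailable and I will replace them by the dual weights/states $\tilde\varphi(x\lambda(g))=\varphi(x)\delta_{g,e}$. The maps $\Phi$ and $\Psi$ intertwine $\tilde\varphi$ with $\tilde\varphi\otimes\tau$. Faithfulness of $\tilde\varphi$ (after passing to the support of $\varphi$, or using a faithful family of invariant states) gives injectivity. Takesaki's theorem then yields the normal conditional expectations, since the modular group of $\tilde\varphi\otimes\tau$ leaves the image invariant: on generators the modular group is $\sigma^\varphi_t\otimes\id$ acting on $x$.

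\textbf{Main obstacle.} I expect this modular-theoretic step to be the hardest. One must verify that the image is globally invariant under the modular group, and handle an invariant normal state that is not faithful. In the latter case I would first cut by the support projection, which is $\alpha$-invariant and central-enough to give a corner. If needed, I would alternatively replace conditional expectations by the weaker statement that QWEP passes to subalgebras that are images of normal ucp maps fixing them. That statement suffices via Kirchberg's characterisation.
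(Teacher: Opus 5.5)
For the finite trace-preserving case your argument is correct and is the paper's. You use the same absorption maps $\Phi$ and $\Psi$, the identification $\L_{\bar\sigma}(G)\cong\L_\sigma(G)^{\mathrm{op}}$, and the expectation onto $\L_\sigma(G)$. The only difference is how you get the expectation onto the image of $\Phi$: you take it from trace preservation, while the paper reads it off from a group-theoretic structure, described below. A small point: outside the finite setting, tensor stability of QWEP should be quoted from Kirchberg/Ozawa rather than derived from $\R^\omega$-embeddability.

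The gap is in the ``more generally'' clause, which allows the invariant normal state $\varphi$ to be non-faithful. Cutting by $p=s(\varphi)$ does not cover the whole crossed product.
\begin{itemize}
\item Indeed $p$ is $\alpha$-invariant and $p(\M\rtimes_\alpha G)p=p\M p\rtimes_\alpha G$. But this only controls the summand of both crossed products lying under the $\alpha$-invariant central support $z$ of $p$ in $\M$.
\item On the complementary summand $(1-z)\M$ the state vanishes, and there need be no invariant normal state at all. For example, take a direct summand $\ell_\infty(G)$ with the translation action and $G$ infinite.
\item So on that summand a dual state gives you neither injectivity of $\Phi,\Psi$ nor Takesaki expectations. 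Falling back to normal ucp maps does not help either: you would still need a map from the target onto the image of $\Phi$.
\end{itemize}

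The missing observation is that this step needs no state at all.
\begin{itemize}
\item The algebra $(\M\rtimes_\alpha G)\barot\L_\sigma(G)$ is, in the regular representation, the twisted crossed product of $\M$ by $G\times G$, acting through the first coordinate with cocycle $\sigma\circ\mathrm{pr}_2$.
\item The image of $\Phi$ is the crossed product by the diagonal subgroup $\Delta G$.
\item Fourier truncation $x\lambda(g)\otimes\lambda_\sigma(h)\mapsto\delta_{g,h}\,x\lambda(g)\otimes\lambda_\sigma(h)$ is a normal conditional expectation onto it, for every action.
\item The same holds for $\Psi$, whose target has cocycle $(\sigma\circ\mathrm{pr}_1)(\bar\sigma\circ\mathrm{pr}_2)$, trivial on $\Delta G$.
\item Faithfulness and normality of $\Phi,\Psi$ follow from spatial absorption, or from the subgroup inclusion itself.
\end{itemize}
The invariant state is then used only for $x\lambda_\sigma(g)\mapsto\varphi(x)\lambda_\sigma(g)$. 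This is a normal conditional expectation onto $\L_\sigma(G)$ whether or not $\varphi$ is faithful, and QWEP passes to its range. This route also makes the modular-theoretic step unnecessary even when $\varphi$ is faithful.
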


Thus the scalar-twisted crossed products associated with the actions in Theorem~\ref{thm:overview}\textup{(i)} remain non-QWEP. The QWEP problem for $\L_\sigma(G)$ is equivalent to a prescribed projective matrix approximation problem, and it also describes QWEP for crossed products with type-I coefficients. The positive results of Chakraborty--Flores \cite[Theorems~A and~B]{CF26} apply to several classes of groups. We obtain a further permanence result for regular wreath products.

\begin{theorem}\label{thm:wreathoverview}
Let $A$ and $\Gamma$ be countable groups, with $\Gamma$ residually finite, and let $W=A^{(\Gamma)}\rtimes\Gamma$. For $\sigma\in Z^2(W,\T)$, put
\[
\alpha=\sigma|_{A_e\times A_e},\qquad \eta=\sigma|_{\Gamma\times\Gamma}.
\]
Then $\L_\sigma(W)$ is QWEP if and only if both $\L_\alpha(A)$ and $\L_\eta(\Gamma)$ are QWEP.
\end{theorem}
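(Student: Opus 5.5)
Necessity is easy. $\L_\alpha(A)$ sits inside $\L_\sigma(W)$ as the subalgebra generated by $\lambda_\sigma(a)$, $a\in A_e$, and $\L_\eta(\Gamma)$ is generated by $\lambda_\sigma(\gamma)$, $\gamma\in\Gamma$. Both are images of trace-preserving embeddings, and each has a normal conditional expectation from $\L_\sigma(W)$ (restriction of Fourier coefficients to the subgroup). QWEP passes to von Neumann subalgebras that are ranges of normal conditional expectations, so both are QWEP.

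For sufficiency, I will first normalize the cocycle. Up to a coboundary, $\L_\sigma(W)$ is a twisted crossed product
\[
\L_\sigma(W)\cong \L_{\sigma'}(A^{(\Gamma)})\rtimes_{\beta,\eta}\Gamma,
\]
where $\sigma'=\sigma|_{A^{(\Gamma)}}$ and $\beta_\gamma=\Ad\lambda_\sigma(\gamma)$. The coboundary adjustment lets me take $\sigma(n,\gamma)=1$ for $n\in A^{(\Gamma)}$, and then the cocycle on $\Gamma\times\Gamma$ is $\eta$. Next I analyze $\sigma'$ on the direct sum. Restricted to $A_g\times A_h$ with $g\neq h$, the cocycle contributes a bicharacter commutation factor. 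Invariance under $\Gamma$-translation means this factor depends only on $g^{-1}h$, and on each $A_g$ the restriction is the translate of $\alpha$. The key structural claim I will prove is that the commutator bicharacters $\omega_{g^{-1}h}$ on $A\times A$ must vanish for all but finitely many $g^{-1}h$. I would try to show they in fact contribute only a finitely supported "interaction kernel" $F\subset\Gamma$. For the present plan, though, it suffices that $\L_{\sigma'}(A^{(\Gamma)})$ is an inductive limit of twisted group algebras of finite sums $A^{S}$.

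Because $\Gamma$ is residually finite, I then pass to finite quotients. Choose finite-index normal subgroups $N_k\searrow\{e\}$ with $N_k\cap FF^{-1}=\{e\}$. The idea is to approximate $\L_\sigma(W)$ in the ultraproduct sense by algebras built from $A^{(\Gamma/N_k)}\rtimes\Gamma$. The twisting of the periodized direct sum $A^{\Gamma/N_k}$ is well defined once $N_k$ avoids $FF^{-1}$, and it is a finite twisted tensor-type product of copies of $\L_\alpha(A)$ with commutation twists. I will use Theorem~\ref{thm:positiveoverview} in the form: a scalar-twisted crossed product by a trace-preserving action is QWEP iff the untwisted crossed product and $\L_\eta(\Gamma)$ are QWEP. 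This reduces matters to two claims.

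The first claim is that $B_k\rtimes\Gamma$ is QWEP, where $B_k=\L(A^{\Gamma/N_k})$ with twist. Here $\Gamma$ acts through the finite group $\Gamma/N_k$ on $B_k$, so $B_k\rtimes\Gamma$ embeds with expectation into $(B_k\rtimes\Gamma/N_k)\barot\L(\Gamma)$ (via $b\lambda(\gamma)\mapsto b\lambda(\bar\gamma)\otimes\lambda(\gamma)$). The crossed product by the finite group is a corner-type amplification $M_{|\Gamma/N_k|}(B_k)$-subalgebra, which is QWEP since $B_k$ is a finite twisted product of QWEP algebras. I will need that $\L(\Gamma)$ is QWEP, which follows from $\Gamma$ being residually finite and hence hyperlinear. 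The tensor product of QWEP von Neumann algebras is QWEP.

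The second claim is that $\L_\sigma(W)$ embeds trace-preservingly into an ultraproduct of these algebras. The map sends a generator in $A_g$ to the corresponding generator in $A_{gN_k}$, and $\gamma$ to $\gamma$. This preserves traces asymptotically because any finite word eventually has distinct cosets for distinct indices. QWEP is stable under ultraproducts and under subalgebras with expectation, and finite ultraproduct embeddings give that, so the result follows.

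The main obstacle is the cocycle analysis in the first step. I must show that the twisting between different coordinates $A_g,A_h$ is controlled so that periodization to $\Gamma/N_k$ defines a genuine 2-cocycle on $A^{\Gamma/N_k}\rtimes\Gamma$ compatible with $\eta$. The cleanest route is to use that $H^2$ of a direct sum decomposes into the $H^2$ of the factors plus bicharacters, and then use the finite support of elements of $A^{(\Gamma)}$ to argue that only finitely many pairwise bicharacters can be nonzero.
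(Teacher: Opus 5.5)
Your overall architecture matches the paper's: removing $\eta$ via Theorem~\ref{thm:positiveoverview} is equivalent to the paper's passage to $\rho=\sigma\,\overline{\pi^*\eta}$ followed by the embedding into $\L_\rho(W)\barot\L_\eta(\Gamma)$, and letting $\Gamma$ itself act through $\Gamma/N_k$ is a harmless variant of $\Pcal_Q$. However, the step you call the key structural claim is false, and your periodization depends on it. Finite support of elements of $A^{(\Gamma)}$ only means that each product involves finitely many commutator bicharacters $\kappa_r$. The family $(\kappa_r)_{r\neq e}$ is constrained only by invariance and the symmetry $\kappa_{r^{-1}}(y,x)=\overline{\kappa_r(x,y)}$. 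Dually, $H^2(\bigoplus_iA_i,\T)$ contains $\prod_{i<j}\mathrm{Hom}(A_i^{\mathrm{ab}}\otimes A_j^{\mathrm{ab}},\T)$, which is a product, not a sum. Moreover, the $\kappa_r$ are commutators of commuting group elements, hence cohomology invariants, so no coboundary can truncate them. Concretely, take $A=\Gamma=\ent$ and $\theta\notin\rat$. The bicharacter $\chi(x,y)=\exp(2\pi i\theta\sum_{i>j}x_iy_j)$ on $\ent^{(\ent)}$ is shift invariant, so $\widetilde c((x,g),(y,h))=\chi(x,g\cdot y)$ is a cocycle on $\ent\wr\ent$, exactly as in Corollary~\ref{cor:CHwreath}. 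Here $\kappa_r(a,b)=e^{\mp2\pi i\theta ab}$ for $r\gtrless0$, which is nontrivial for every $r\neq0$. So there is no finite $F$, and the condition $N_k\cap FF^{-1}=\{e\}$ has no meaning. Nor can the full family be periodized to $A^{\Gamma/N_k}$: every coset $r+n\ent\neq n\ent$ contains integers of both signs, on which $\kappa$ takes different values.

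The repair is the truncation the paper uses. Exhaust $\Gamma\setminus\{e\}$ by finite symmetric sets $F_k$, and choose $N_k$ so that $\Gamma\to\Gamma/N_k$ is injective on $\{e\}\cup F_k$. Transport $\kappa_r$ only for $r\in F_k$, and use the trivial bicharacter on all remaining cosets. Injectivity on a symmetric set keeps the symmetry relation consistent, including for involutions. A fixed word sees only finitely many differences, so its moments are eventually matched exactly; the paper does this for each finite family of words.

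A second step is asserted rather than proved: that $B_k$, being ``a finite twisted product of QWEP algebras'', is QWEP. This is precisely where QWEP of $\L_\alpha(A)$ must enter, and it needs an argument. The cross-coordinate phases are multiplicative in each variable, so they factor through $B=A_{\mathrm{ab}}$. One can therefore take the twist $c(a,b)=\prod_i\alpha(a_i,b_i)\,\chi_Q(\bar a,\bar b)$, where $\chi_Q$ is a bicharacter on the abelian group $B^Q$ built from an ordering of $Q=\Gamma/N_k$. Then $\lambda_c(a)\mapsto(\bigotimes_i\lambda_\alpha(a_i))\otimes\lambda_{\chi_Q}(\bar a)$ is a trace-preserving embedding into $(\barot_i\L_\alpha(A))\barot\L_{\chi_Q}(B^Q)$, whose last factor is injective.

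You also need two further points. First, translation by $Q$ must be an automorphism of $B_k$; this holds because the relations depend only on differences and ordered monomials form a basis. Second, take the generators of $A_g$ to be $\beta_g$-translates of those of $A_e$, as the paper does with $U_s(a)=V(s)U_e(a)V(s)^*$. Then $\Gamma$ acts by pure translation on both sides of your ultraproduct map, and no conjugation phases need matching.
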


For Fourier multipliers, these results have two consequences. The positive QWEP statements allow us to apply the tensor extension theorem and obtain cocycle-independent completely bounded multiplier norms. Conversely, a strict increase of a completely bounded Fourier multiplier norm under passage to a twisted crossed product implies that this crossed product is not QWEP. We formulate these statements in Section~\ref{sec:fourier}, together with the Schur multiplier comparison under amenability of the action. This gives an analytic approach to the scalar-twist question.

The paper is organized as follows. Section~\ref{sec:input} recalls the required facts about twisted crossed products and QWEP. Section~\ref{sec:pair} constructs the groups used in the counterexamples, and Section~\ref{sec:ordinary} proves the negative permanence results. Sections~\ref{sec:twisted} and~\ref{sec:typeI} treat scalar cocycles and type-I coefficients. The regular wreath-product theorem is proved in Section~\ref{sec:regularwreath}. We discuss Fourier multiplier transference in Section~\ref{sec:fourier}. Finally, Section~\ref{sec:reverse} gives the reverse examples for operator-valued cocycles, including the measured-groupoid construction.

\section{Preliminaries}\label{sec:input}
All groups are countable and discrete. Unless otherwise stated, finite von Neumann algebras have separable predual and are equipped with a faithful normal tracial state. We write $\L(G)$ for the group von Neumann algebra and $\L_\sigma(G)$ for its scalar-twisted version, with the convention $(\lambda_\sigma(s)\xi)(t)=\sigma(t^{-1},s)\xi(s^{-1}t)$. All crossed products are taken in their regular von Neumann algebra representations.

\begin{definition}[{\cite{xxz26}, \cite{Zeng23}}]
A normalized cocycle action \((\alpha,\sigma)\) of \(G\) on \(\M\) consists of automorphisms \(\alpha_g\) and unitaries \(\sigma(g,h)\in \M\) such that
\begin{align*}
\alpha_g\alpha_h&=\Ad_{\sigma(g,h)}\alpha_{gh},\\
\sigma(g,h)\sigma(gh,k)&=\alpha_g(\sigma(h,k))\sigma(g,hk),
\end{align*}
with $\alpha_e=\id$ and $\sigma(e,g)=\sigma(g,e)=1$. Fix a faithful normal representation $\M\subset B(H)$. On $\ell_2(G,H)$, put
\[
(\pi_\alpha(a)\xi)(t)=\alpha_{t^{-1}}(a)\xi(t),\qquad
(\lambda_\sigma(s)\xi)(t)=\sigma(t^{-1},s)\xi(s^{-1}t).
\]
We identify $\M$ with $\pi_\alpha(\M)$. The crossed product is generated by \(\M\) and unitaries \(\lambda_\sigma(g)\) with
\[
\lambda_\sigma(g)x\lambda_\sigma(g)^*=\alpha_g(x),\qquad \lambda_\sigma(g)\lambda_\sigma(h)=\sigma(g,h)\lambda_\sigma(gh).
\]
A scalar cocycle has \(\sigma(g,h)\in\T1\); then \(\alpha\) is an ordinary action. If all \(\alpha_g\) preserve \(\tau\), the canonical trace satisfies \(\widehat\tau(x\lambda_\sigma(g))=0\) for \(g\neq e\), and equals \(\tau(x)\) for \(g=e\).
\end{definition}

For a nonprincipal ultrafilter \(\omega\), write
\[
\mathcal Q_\omega=\prod_{n\to\omega}(\bM_{d_n},\tr_{d_n}),
\qquad \tr_d=d^{-1}\Tr.
\]
This quotient uses operator-norm bounded sequences and the ideal of sequences whose normalized \(2\)-norm tends to zero along $\omega$. A subalgebra is \emph{internal} if it is \([B_n]_\omega\) for unital \(*\)-subalgebras \(B_n\subset \bM_{d_n}\).

\begin{theorem}[Alekseev--Liu--Thom, {\cite[Theorem~B]{ALT26}}]
If a finite unitary tuple in \(\mathcal Q_\omega\) has conjugation spectral gap off its relative commutant, that commutant is internal. Consequently, for every Kazhdan group \(H\) and every homomorphism \(\pi:H\to\cU(\mathcal Q_\omega)\), the algebra \(\pi(H)'\cap\mathcal Q_\omega\) is internal.
\end{theorem}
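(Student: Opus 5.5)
The second assertion reduces to the first, so I would prove the first (spectral gap implies internality) and then derive the Kazhdan consequence. For that consequence, fix a finite generating set $S$ of $H$ with Kazhdan constant $\kappa>0$, and let $H$ act on $L_2(\mathcal Q_\omega)$ by $\xi\mapsto\pi(h)\xi\pi(h)^*$. This is a unitary representation, and its invariant vectors are exactly $L_2(\pi(H)'\cap\mathcal Q_\omega)$: an invariant bounded element commutes with $\pi(H)$, and the conditional expectation onto the commutant handles general vectors. Property (T) then gives $\max_{s\in S}\normtwo{\pi(s)\xi\pi(s)^*-\xi}\ge\kappa\normtwo{\xi}$ for every $\xi$ orthogonal to the commutant. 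This is precisely conjugation spectral gap for the finite unitary tuple $(\pi(s))_{s\in S}$, and the first part then says that $\pi(S)'\cap\mathcal Q_\omega=\pi(H)'\cap\mathcal Q_\omega$ is internal.

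For the first part, let $u=(u_1,\dots,u_k)$ be the tuple and lift each entry to unitaries $u_{i,n}\in\bM_{d_n}$. On $(\bM_{d_n},\normtwo{\cdot})$, consider the positive self-adjoint \emph{Laplacian}
\[
L_n(x)=\sum_{i}\bigl(2x-u_{i,n}xu_{i,n}^*-u_{i,n}^*xu_{i,n}\bigr).
\]
Let $V_n$ be the spectral subspace of $L_n$ for the interval $[0,\varepsilon_n]$, with $\varepsilon_n\to0$ slowly along $\omega$. The first step is to show that spectral gap in the ultraproduct forces a uniform gap for $L_n$ along $\omega$. Otherwise, normalized eigenvectors with small but nonvanishing eigenvalues, orthogonal to $V_n$, would define an element of $L_2(\mathcal Q_\omega)$ that is almost invariant but orthogonal to the commutant. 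One has to control operator norms here: I would truncate by spectral projections, or work with the $L_2$-ultraproduct and use that bounded elements are $\normtwo{\cdot}$-dense. The outcome is that the ultraproduct of the (cut-down) $V_n$ equals $\pi(S)'\cap\mathcal Q_\omega$ in $L_2$.

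The main obstacle is that $V_n$ is only a $*$-closed, unital, \emph{almost} multiplicatively closed subspace of $\bM_{d_n}$, not an algebra. The gap lets one quantify this. For $x,y\in V_n$ with bounded norms, the product $xy$ is almost fixed by all the $\Ad u_{i,n}$, so the gap gives
\[
\normtwo{xy-P_{V_n}(xy)}\le C\,\delta_n\,\|x\|\,\|y\|,\qquad \delta_n\to0 .
\]
What is then needed is a stability theorem in the normalized $2$-norm: an approximate $*$-subalgebra of $\bM_d$ in this sense lies close to a genuine unital $*$-subalgebra $B_n$, with $[B_n]_\omega=[V_n]_\omega$. My first attempt would be to pass to the commutant side. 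The approximate algebra $V_n$ determines a near-idempotent conditional-expectation-like map $P_{V_n}$, and I would try to straighten it into an honest conditional expectation onto an algebra, in the spirit of Christensen/Popa perturbation for subalgebras in $2$-norm and Kazhdan-type $\varepsilon$-representation rigidity. A dimension-free version of that rigidity is exactly what makes the estimates uniform in $d_n$. Once such $B_n$ are found, it remains to check that $[B_n]_\omega$ contains and is contained in the relative commutant, which follows from the $2$-norm closeness and the first step.
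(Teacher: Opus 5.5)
Your derivation of the Kazhdan consequence is correct and is the paper's one-line argument: property $(T)$ for the conjugation representation on $L_2(\mathcal Q_\omega)$, whose invariant vectors are $L_2(\pi(H)'\cap\mathcal Q_\omega)$. To justify that identification, note that the projection onto invariant vectors sends $x\in\mathcal Q_\omega$ to the minimal-norm point of the closed convex hull of its orbit, which has operator norm at most $\|x\|$.

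The first assertion, however, is not proved in the paper; it is quoted from \cite[Theorem~B]{ALT26}. Your sketch of it has a genuine gap, and its first step is false. The hypothesis only concerns $L_2(\mathcal Q_\omega)$, which is a proper subspace of the Hilbert-space ultraproduct $\prod_\omega L_2(\bM_{d_n},\tr_{d_n})$. Bounded sequences are not dense in the latter: $\sqrt{d_n}\,e_{11}$ has distance at least $1-M/\sqrt{d_n}$ from every sequence of operator norm at most $M$.

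Here is a concrete failure. The one-element tuple $u=1$ satisfies the gap hypothesis vacuously. Take the lift $u_n=1_{a_n}\oplus C_{b_n}$, with $C_b$ the cyclic shift, $b_n\to\infty$ and $b_n/(a_n+b_n)\to0$. Then $L_n$ has all the eigenvalues $2-2\cos(2\pi j/b_n)$, so $(\varepsilon_n,c)$ meets its spectrum for all large $n$, whatever $\varepsilon_n\to0$ and $c>0$ you choose. The eigenvectors for nonzero eigenvalues satisfy $p_n\xi p_n=0$, where $p_n=1_{a_n}\oplus0$ and $\tr(1-p_n)\to0$. Hence they are asymptotically orthogonal to every uniformly bounded sequence and are annihilated by truncation, so no contradiction with the gap in $\mathcal Q_\omega$ arises. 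The same happens for $u_{i,n}=v_n(s_i)\oplus w_{i,n}$, with $v_n$ genuine representations of a Kazhdan group, $w_{1,n}=C_{b_n}$ and $w_{i,n}=1$ for $i\geq2$. So this is not a degenerate phenomenon.

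Even granting a gap, the step you call the main obstacle is left open, and it is the entire difficulty. You need a dimension-free theorem that turns a unital $*$-closed subspace of $\bM_d$, approximately multiplicative in $\normtwo{\cdot}$, into a nearby unital $*$-subalgebra $B_n$ with $[B_n]_\omega$ equal to the commutant. The tools you name do not provide this:
Christensen-type perturbation theorems compare two genuine subalgebras; Kazhdan's $\varepsilon$-representation theorem is an operator-norm statement for amenable groups; and $2$-norm stability is genuinely delicate (compare the Hilbert--Schmidt instability of some residually finite amenable groups in \cite{Eckhardt25}).

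Moreover, your estimate $\normtwo{xy-P_{V_n}(xy)}\le C\delta_n\|x\|\,\|y\|$ says nothing about unit vectors of $V_n$ with large operator norm, and these do occur. In the example above, $\sqrt{(a_n+b_n)/b_n}\,(0\oplus1_{b_n})$ lies in $\ker L_n$. So $V_n$ itself is not the right object to straighten.

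As it stands you have proved only the reduction, which is also all the paper proves. The internality statement has to be taken from \cite[Theorem~B]{ALT26}.
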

For the consequence, apply property \((T)\) to the conjugation representation on \(L_2(\mathcal Q_\omega)\).

For \(H\leq G\), put
\[
P_H=\{t\in G:tHt^{-1}\subseteq H\}.
\]
The subgroup \(H\) is \emph{infranormal} if this monoid generates \(G\) as a group.

\begin{theorem}[Thom, {\cite[Theorem 1.2]{Thom26}}]\label{thm:normalization}
If \(H\leq G\) are Kazhdan and \(H\) is infranormal, then every homomorphism \(\pi:G\to\cU(\mathcal Q_\omega)\) satisfies
\[
\pi(g)(\pi(H)'\cap\mathcal Q_\omega)\pi(g)^*=\pi(H)'\cap\mathcal Q_\omega\qquad(g\in G).
\]
\end{theorem}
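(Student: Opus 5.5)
The plan is to reduce the statement to showing that $\Ad\pi(t)$ maps $A:=\pi(H)'\cap\mathcal Q_\omega$ \emph{onto} itself for every $t$ in the monoid $P_H$. Once this holds, the set $\{g\in G:\pi(g)A\pi(g)^*=A\}$ is a subgroup of $G$, since it is closed under products and inverses and contains $e$. It contains $P_H$, so by infranormality it is all of $G$.

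The one-sided inclusion is formal. If $t\in P_H$, then $tHt^{-1}\subseteq H$, so every $x\in A$ commutes with $\pi(tht^{-1})=\pi(t)\pi(h)\pi(t)^*$ for all $h\in H$. Hence $\pi(t)^*x\pi(t)$ commutes with $\pi(H)$, which gives
\[
\pi(t)^*A\,\pi(t)\subseteq A\qquad(t\in P_H).
\]
So $\Ad\pi(t)^*$ is a unital, trace-preserving, injective endomorphism of $A$. Such an endomorphism need not be onto for a general von Neumann algebra: the hyperfinite II$_1$ factor embeds properly into itself. The extra input must therefore be internality, which holds because $H$ is Kazhdan, by the consequence of \cite[Theorem~B]{ALT26} stated above.

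Write $A=[B_n]_\omega$ with unital $*$-subalgebras $B_n\subset\bM_{d_n}$, and lift $\pi(t)$ to unitaries $u_n\in\bM_{d_n}$. Then $\pi(t)^*A\pi(t)=[u_n^*B_nu_n]_\omega$. The problem becomes a finite-dimensional rigidity statement. Suppose $C_n=u_n^*B_nu_n$ satisfies $[C_n]_\omega\subseteq[B_n]_\omega$. This means that, along $\omega$, the unit ball of $C_n$ lies uniformly in a $\varepsilon_n$-neighbourhood of $B_n$ in $\|\cdot\|_2$, with $\varepsilon_n\to0$; to get uniform smallness one has to pass from elementwise approximation to a uniform one, by a diagonal/saturation argument in the ultraproduct. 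I would then apply a $2$-norm perturbation theorem for finite-dimensional (or injective) subalgebras of finite von Neumann algebras, of Christensen--Popa type. This produces unitaries $v_n$ with $\|v_n-1\|_2\to0$ and $v_nC_nv_n^*\subseteq B_n$, after cutting by a projection of trace close to $1$ if the theorem only yields a partial isometry. Since $v_nC_nv_n^*$ is unitarily conjugate to $B_n$, it has the same dimension and the same matrix-block structure with multiplicities, and an exact inclusion then forces $v_nC_nv_n^*=B_n$. Passing to the ultraproduct, $[v_n]_\omega=1$, so $\pi(t)^*A\pi(t)=A$.

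The main obstacle is the perturbation step. The near-inclusion is only in normalized $2$-norm, the sizes $d_n$ and the block structures of $B_n$ are unbounded, and the perturbation constants must be dimension-free. If the available theorem only gives $v_n$ that are partial isometries with $v_n^*v_n$ of trace near $1$, the exact-equality argument has to be replaced by an approximate one. For this I would compare the central traces of $B_n$ and $v_nC_nv_n^*$ block by block: the two are equal, because $u_n$ conjugates one algebra to the other and preserves the trace. This should rule out a proper inclusion in the limit, since a proper unital inclusion of finite-dimensional algebras with matching central-trace data cannot occur. If this route fails, the fallback is to use the spectral gap of $\pi(H)$ directly. Property (T) gives a quantitative estimate $\|y-E_A(y)\|_2\le K\max_i\|[\pi(h_i),y]\|_2$. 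Applying it to $\pi(t)y\pi(t)^*$ with $y\in A$, I would aim to exhibit $y$ as lying in $\pi(t)^*A\pi(t)$ up to an error controlled by the commutators, which vanish.
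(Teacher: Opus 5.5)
Your reduction is sound. For $t\in P_H$ you correctly get $\pi(t)^*A\pi(t)\subseteq A$. Once this is an equality, the set of $g$ with $\pi(g)A\pi(g)^*=A$ is a subgroup containing $P_H$, hence all of $G$. The gap is the passage from inclusion to equality. You reduce it to the claim that no unitary in $\mathcal Q_\omega$ can conjugate an internal algebra properly into itself, and that claim is false.

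Here is a counterexample. Let $d=(L+1)2^L$, decompose $\com^d=\bigoplus_{j=0}^{L}e_j\com^d$ with $e_j\com^d\cong\com^{2^j}\otimes\com^{2^{L-j}}$, and put $B=\bigoplus_{j=0}^{L}\bM_{2^j}\otimes1_{2^{L-j}}$. Let $C$ act as $\bM_{2^L}$ on $e_0\com^d$, and as $\{(1_2\otimes x)\otimes1_{2^{L-j}}:x\in\bM_{2^{j-1}}\}$ on $e_j\com^d$ for $j\geq1$.
\begin{itemize}
\item For each $i$, both algebras have exactly one summand $\bM_{2^i}$, of multiplicity $2^{L-i}$. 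Hence $C=w^*Bw$ for some unitary $w\in\bM_d$.
\item Since $C(1-e_0)\subseteq B$, every $x$ in the unit ball of $C$ satisfies $\|x-E_B(x)\|_2\leq(L+1)^{-1/2}$.
\item The element $b=\sum_{j\geq1}(\operatorname{diag}(1,-1)\otimes1_{2^{j-1}})\otimes1_{2^{L-j}}$ lies in $B$, is orthogonal to $C$, and has $\|b\|_2^2=L/(L+1)$.
\end{itemize}
Now let $L_n\to\infty$. Then $A=[B_n]_\omega$ is internal, $u=[w_n]_\omega$ is unitary, and $u^*Au=[C_n]_\omega\subseteq A$. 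But $[b_n]_\omega$ is a nonzero element of $A$ orthogonal to $u^*Au$, so the inclusion is proper.

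The example also shows where each of your finite-dimensional routes fails.
\begin{itemize}
\item \emph{Perturbation.} A unitary $v_n$ with $\|v_n-1\|_2\to0$ and $v_nC_nv_n^*\subseteq B_n$ cannot exist: it would force $v_nC_nv_n^*=B_n$ and put $b_n$ within $2\|v_n-1\|_2$ of $C_n$. Christensen-type results only return a partial isometry, essentially $1-e_0$. The compressed inclusion $C_n(1-e_0)\subseteq B_n(1-e_0)$ is then far from onto: each summand of $C_n(1-e_0)$ sits with multiplicity two inside the summand of $B_n$ of twice its size.
\item \emph{Central traces.} The central traces of $B_n$ and $C_n$ agree exactly, every summand having trace $1/(L+1)$, yet the limit inclusion is proper.
\item \emph{Spectral gap.} This fallback is circular. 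Indeed $[\pi(h_i),\pi(t)y\pi(t)^*]=\pi(t)[\pi(t^{-1}h_it),y]\pi(t)^*$, and $t^{-1}h_it$ lies in the larger group $t^{-1}Ht$, not necessarily in $H$. So the vanishing of these commutators is exactly the claim $y\in\pi(t)^*A\pi(t)$.
\end{itemize}
Internality of $A$ alone therefore cannot give surjectivity. Note also that the hypothesis that $G$ is Kazhdan never enters your argument. The missing input is the content of Thom's theorem itself. The paper gives no independent proof: it cites \cite[Theorem~1.2]{Thom26}, with \cite[Theorem~B]{ALT26} supplying its centralizer hypothesis.
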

The internality theorem supplies the centralizer hypothesis in \cite[Theorem 1.2]{Thom26}, and hence gives this normalization statement.

QWEP passes to expected subalgebras and to spatial tensor products by \cite[Proposition~4.1\textup{(ii),(v)}]{Ozawa04}. It also passes to complex conjugate algebras. For countable groups, \cite[Corollary~6.2 and Proposition~7.1]{Ozawa04} give
\begin{equation}\label{eq:bridge}
G\text{ hyperlinear}\quad\Longleftrightarrow\quad
(\L(G),\tau_G)\text{ Connes embeddable}\quad\Longleftrightarrow\quad
\L(G)\text{ QWEP}.
\end{equation}
In particular, residually finite groups have QWEP group von Neumann algebras.

\begin{definition}[Chakraborty--Flores, {\cite[Definition~2.1]{CF26}}]
A countable group \(G\) is \emph{cocycle-hyperlinear} if \(\L_\sigma(G)\) is Connes embeddable for every normalized scalar \(2\)-cocycle \(\sigma\). For the finite tracial algebras here, Connes embeddability and QWEP are equivalent. In particular, a cocycle-hyperlinear group is hyperlinear.
\end{definition}
Cocycle-hyperlinearity is equivalent to hyperlinearity of every central extension of \(G\) by a countable abelian group; see \cite[Lemma~3.4]{Thom10}.

\section{A torsion-free Kazhdan construction}\label{sec:pair}
Following \cite[Theorem E]{KT26}, set
\[
R_+=\ent[x_1,x_2,x_3],\qquad R=\ent[x_1^{\pm1},x_2^{\pm1},x_3^{\pm1}],
\]
\[
H_{\ent}=\EL_3(R_+),\qquad N_{\ent}=\EL_3(R),\qquad
\Gamma_{\ent}=N_{\ent}\rtimes\SL_3(\ent),
\]
where \(S\in\SL_3(\ent)\) acts by \(x^v\mapsto x^{Sv}\). Evaluation at \((1,1,1)\) gives a homomorphism
\[
\Psi:\Gamma_{\ent}\longrightarrow\SL_3(\FF_3)\times\SL_3(\FF_3),
\qquad (g,S)\longmapsto(g(1,1,1)\bmod3,S\bmod3).
\]
Put
\begin{equation}\label{eq:pair}
\Gamma_0=\ker\Psi,\qquad H_0=H_{\ent}\cap\Gamma_0.
\end{equation}

\begin{proposition}\label{prop:pairprops}
The groups \(\Gamma_0,H_0\) are finitely generated, residually finite, torsion-free Kazhdan groups.
\end{proposition}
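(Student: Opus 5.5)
Since $\Gamma_0$ and $H_0$ have finite index in $\Gamma_{\ent}$ and $H_{\ent}\rtimes\SL_3(\ent)$ (or $H_{\ent}$) respectively, the plan is to prove the four properties for the ambient groups and pass them to the finite-index kernels. The kernel of $\Psi$ has index at most $|\SL_3(\FF_3)|^2$ in $\Gamma_{\ent}$. Likewise $H_0=\ker(\Psi|_{H_{\ent}})$ has finite index in $H_{\ent}$. Finite generation, residual finiteness and property (T) all pass to finite-index subgroups: finite generation by Schreier's lemma, residual finiteness trivially, and (T) by the standard induction argument. So it suffices to know them for $\Gamma_{\ent}$ and $H_{\ent}$, and then to prove torsion-freeness of $\Gamma_0$ directly, which gives it for $H_0\le\Gamma_0$.

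\textbf{The ambient groups.} Property (T) for $\EL_3(R_+)$ and $\EL_3(R)$, with $R_+,R$ finitely generated commutative rings, is the Ershov--Jaikin-Zapirain theorem. For $\Gamma_{\ent}=N_{\ent}\rtimes\SL_3(\ent)$, property (T) follows because $N_{\ent}$ and $\SL_3(\ent)$ both have (T) and (T) is closed under extensions; this is exactly the input used in \cite[Theorem~E]{KT26}. Finite generation also holds: $\EL_3$ of a finitely generated ring is generated by the elementary matrices $e_{ij}(x^{\pm}_k)$ together with $e_{ij}(1)$, since the Steinberg commutator relations produce all monomials. Residual finiteness holds because $N_{\ent}\subset\SL_3(R)$ is linear over a finitely generated domain, hence residually finite by Mal'cev. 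For the semidirect product, $\SL_3(\ent)$ acts on $R$ by ring automorphisms, so $\Gamma_{\ent}$ embeds in $\SL_3(R)\rtimes\mathrm{Aut}$. I would instead use the finite quotients $R\to R/(\mathfrak m,\,x^{v}-1\text{ for }v\in m\ent^3)$, which are $\SL_3(\ent)$-stable. This gives compatible finite quotients of both factors that separate points. I expect to cite \cite{KT26} for most of this.

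\textbf{Torsion-freeness.} This is the main obstacle. Let $(g,S)\in\Gamma_0$ have finite order $n$. Then $S^n=1$ and $S\equiv1\pmod 3$. By Minkowski's lemma, the congruence kernel of level $3$ in $\SL_3(\ent)$ is torsion-free, so $S=1$. Then $g\in\EL_3(R)$ satisfies $g^n=1$, and $g(1,1,1)\equiv1\pmod3$. Specialize at points: for each ring map $R\to\ent$ obtained by sending each $x_i$ to $\pm1$, or more generally $R\to\ent[\zeta]$, the image is a finite-order matrix. At $(1,1,1)$ we get a finite-order element of $\SL_3(\ent)$ congruent to $1$ mod $3$, hence trivial. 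To conclude $g=1$, I would argue as follows. Since $R$ is a domain of characteristic $0$, $g$ is diagonalizable over the algebraic closure of $\mathrm{Frac}(R)$ with root-of-unity eigenvalues. Its characteristic polynomial has coefficients in $R$ that are constants, because roots of unity are algebraic over $\rat$ and $\rat$ is algebraically closed in $\mathrm{Frac}(R)$. Evaluating at $(1,1,1)$ then shows that the characteristic polynomial is $(t-1)^3$. Combined with diagonalizability, this gives $g=1$. The same argument with $S$ general handles elements that project nontrivially, and completes the proof.
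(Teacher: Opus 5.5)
Your proposal is correct. For property (T), finite generation and residual finiteness you follow essentially the paper's route: Ershov--Jaikin-Zapirain, then permanence under extensions and finite-index subgroups, and finite quotients through the $\SL_3(\ent)$-stable rings $(\ent/m'\ent)[(\ent/m\ent)^3]$. Like the paper, you use Minkowski's lemma to force $S=I$ and $g(1,1,1)=I$; the paper proves this lemma directly by a $3$-adic valuation estimate.

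The genuine difference is the last step of torsion-freeness.
\begin{itemize}
\item The paper embeds $\rat[x_1^{\pm1},x_2^{\pm1},x_3^{\pm1}]$ into $\rat[[y_1,y_2,y_3]]$ via $x_i\mapsto 1+y_i$. It then takes the lowest-degree homogeneous term $C_d$ of $g-I$, so that $g^m=I+mC_d+(\text{higher terms})\neq I$. This is a filtration argument: it exhibits the nonvanishing leading term of $g^m-I$ for every nontrivial $g$ with $g(1,1,1)=I$, using no eigenvalue theory.
\item You instead use that a finite-order matrix over a field of characteristic $0$ is semisimple. Its characteristic polynomial has coefficients in $R$ that are algebraic over $\rat$, hence in $\rat\cap R=\ent$, because $\rat$ is algebraically closed in $\rat(x_1,x_2,x_3)$. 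Specializing at $(1,1,1)$ gives characteristic polynomial $(t-1)^3$, and semisimplicity then gives $g=I$.
\end{itemize}
Your rigidity argument is slightly more conceptual: the conjugacy class of a torsion element over $\overline{\rat(x_1,x_2,x_3)}$ is already determined by any single specialization. The paper's argument is more hands-on and self-contained. Both use characteristic $0$ in an essential way.

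Two cosmetic points.
\begin{itemize}
\item $H_\ent\rtimes\SL_3(\ent)$ is not defined, since $\SL_3(\ent)$-substitutions do not preserve $R_+$. You correctly fall back on $H_\ent$.
\item Your closing sentence about ``$S$ general'' is vacuous. Minkowski's lemma has already shown that every torsion element of $\Gamma_0$ has $S=I$.
\end{itemize}
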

\begin{proof}
By \cite[Theorem~1.1]{EJZ10}, \(H_{\ent},N_{\ent}\) and \(\SL_3(\ent)\) have property \((T)\). Stability under extensions and finite-index subgroups gives property \((T)\), hence finite generation, for \(\Gamma_0,H_0\).

The finite ring maps
\[
R\longrightarrow(\ent/m\ent)[(\ent/k\ent)^3]
\]
give finite semidirect-product quotients of \(\Gamma_{\ent}\). If \(S\ne I\), choose \(k\) detecting \(S\). Otherwise, choose \(k\) separating the exponent vectors in a nonzero entry of \(g-I\), and \(m\) retaining a nonzero coefficient. Thus \(\Gamma_{\ent}\), and its subgroups, are residually finite.

An integral matrix of finite order congruent to \(I\) modulo \(3\) equals \(I\). Otherwise a nontrivial prime-order power would satisfy
\[
0=(I+3^aB)^p-I
=p3^aB+\sum_{j=2}^p\binom pj3^{aj}B^j,
\qquad a\geq1,\quad B\not\equiv0\pmod3,
\]
where the first term has strictly smaller minimum \(3\)-adic valuation than the remaining terms.

If \((g,S)\in\Gamma_0\) has finite order, it follows that \(S=I\) and \(g(1,1,1)=I\). Under the injection
\[
\rat[x_1^{\pm1},x_2^{\pm1},x_3^{\pm1}]
\hookrightarrow\rat[[y_1,y_2,y_3]],\qquad x_i\mapsto1+y_i,
\]
a first nonzero homogeneous term \(C_d\) of \(g-I\) would give
\[
g^m=I+mC_d+\text{terms of degree greater than }d\ne I.
\]
Hence \(g=I\), proving torsion-freeness.
\end{proof}

\begin{lemma}\label{lem:monoid}
If a submonoid \(P\subset G\) generates \(G\), and \(G_0\triangleleft G\) has finite index, then \(P\cap G_0\) generates \(G_0\).
\end{lemma}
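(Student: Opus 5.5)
Let $n=[G:G_0]$ and $\pi:G\to F=G/G_0$ the quotient map. The plan is to show that every $p\in P$ lies in the subgroup $K=\langle P\cap G_0\rangle$ up to a controlled correction, and then use a Schreier-type argument. The key observation is this: for $p\in P$, the image $\pi(p)$ has finite order $m\le n$ in $F$. Hence $p^m\in P\cap G_0$, since $P$ is a monoid.

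More generally, every element of $G$ can be written as a word $p_1^{\varepsilon_1}\cdots p_k^{\varepsilon_k}$ with $p_i\in P$. We first eliminate inverses using positive words. For $p\in P$ with $\pi(p)$ of order $m$, we have
\[
p^{-1}=p^{m-1}(p^m)^{-1},
\]
where $p^{m-1}\in P$ and $(p^m)^{-1}\in K$. So the plan is to show that every $g\in G_0$ can be written as an alternating product of elements of $P$ and elements of $K$. Concretely, set
\[
S=\{\,q_1k_1q_2k_2\cdots q_rk_r : q_i\in P\cup\{e\},\ k_i\in K\,\}.
\]
This set contains $P$ and $K$ and is closed under products. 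It is also closed under inverses: $k^{-1}\in K$, and $p^{-1}\in S$ by the identity above. Hence $S$ is a subgroup containing $P$, so $S=G$.

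Now take $g\in G_0$ and write $g=q_1k_1\cdots q_rk_r$. The remaining step is to push the $K$-factors to one side. This is the main obstacle, since $K$ need not be normal in $G$. I would handle it via conjugation. For $q\in P$ and $k\in P\cap G_0$, the element $qkq^{-1}$ is not obviously in $K$. However, with $m$ the order of $\pi(q)$,
\[
qk=(qkq^{m-1})\,(q^{m})^{-1}q,
\]
and $qkq^{m-1}\in P\cap G_0$ because $\pi(qkq^{m-1})=\pi(q)^m=1$. Since $(q^m)^{-1}\in K$ as well, this gives $qk\in Kq$. The same holds for $k^{-1}$ by applying the identity to $k$ and inverting: $qk^{-1}=(qk^{-1}q^{-1})q$, and $qkq^{-1}=(qkq^{m-1})(q^m)^{-1}\in K$, so its inverse lies in $K$. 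Thus $qKq^{-1}\subseteq K$ for every $q\in P$.

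Using $qK\subseteq Kq$ repeatedly, we get $g=k\,q_1q_2\cdots q_r$ with $k\in K$. Then $p:=q_1\cdots q_r\in P$, and $\pi(p)=\pi(k^{-1}g)=1$ because $K\subseteq G_0$. Therefore $p\in P\cap G_0\subseteq K$, and so $g\in K$. This gives $G_0=K$, which proves the lemma.
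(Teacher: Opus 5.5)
Your argument is correct, and it takes a different route from the paper's.

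\textbf{The paper's route.} It first observes that $P$ maps onto the finite quotient $G/G_0$ and chooses coset representatives inside $P$. It then invokes Schreier's lemma. Each Schreier generator, or its inverse, has the form $rp(r')^{-1}$ with $r,p,r'\in P$. Multiplying by an element $c\in P$ from the coset inverse to $r'G_0$ rewrites it as $(rpc)(r'c)^{-1}$, with both factors in $P\cap G_0$.

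\textbf{Your route.} You avoid transversals and Schreier's lemma altogether. The identity $qkq^{-1}=(qkq^{m-1})(q^m)^{-1}$ shows that $K=\langle P\cap G_0\rangle$ satisfies $qKq^{-1}\subseteq K$ for every $q\in P$. Combined with $p^{-1}=p^{m-1}(p^m)^{-1}$, this gives the factorization $G=KP$. From that, $G_0=K(P\cap G_0)=K$ is immediate.

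\textbf{Comparison.} Both proofs rest on the same trick: an inverse is replaced by an element of $P$ lying in the inverse coset, which is $q^{m-1}$ for you and $c$ in the paper. Your version is self-contained and yields two extra structural facts: $P$ conjugates $K$ into itself, and $G=KP$. The paper's version is shorter once Schreier's lemma is granted. It also exhibits explicit generators of $G_0$ in $P\cap G_0$, indexed by a finite transversal and a generating set of $G$ taken inside $P$. This set is finite whenever $G$ is generated by a finite subset of $P$.
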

\begin{proof}
The image of \(P\) is the whole finite quotient. Choose coset representatives in \(P\). A Schreier generator, or its inverse, has the form \(rp(r')^{-1}\), with \(r,p,r'\in P\) and \(rpG_0=r'G_0\). Choose \(c\in P\) representing \((r'G_0)^{-1}\). Then
\[
rp(r')^{-1}=(rpc)(r'c)^{-1},\qquad rpc,r'c\in P\cap G_0.
\]
\end{proof}

\begin{proposition}\label{prop:compression}
The subgroup \(H_0\) is infranormal and nonnormal in \(\Gamma_0\). With \(E_{ij}\) the matrix units, put
\begin{equation}\label{eq:tr}
t=(I_3,I_3+3E_{12}),\qquad r=(I_3+3x_2E_{12},I_3).
\end{equation}
Then \(t\in\Gamma_0\), \(r\in H_0\), and
\begin{equation}\label{eq:strict}
tH_0t^{-1}\subset H_0,\qquad
t^{-1}rt=(I_3+3x_1^{-3}x_2E_{12},I_3)\notin H_0.
\end{equation}
Moreover \(I=\Gamma_0/H_0\) is countably infinite.
\end{proposition}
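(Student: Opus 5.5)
The plan is to verify everything by direct computation in $\Gamma_{\ent}=N_{\ent}\rtimes\SL_3(\ent)$, with multiplication $(g,S)(h,T)=(g\,S(h),ST)$, where $S(h)$ applies $x^v\mapsto x^{Sv}$ entrywise. The only nontrivial part is infranormality. There I will exhibit a generating submonoid of $\Gamma_{\ent}$ inside $P_{H_{\ent}}$ and then pass to $\Gamma_0$ with Lemma~\ref{lem:monoid}.

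\emph{Membership and the displayed relations.} Put $S=I_3+3E_{12}$. Then $Se_2=e_2+3e_1$, so $S$ acts by $x_1\mapsto x_1$, $x_2\mapsto x_1^3x_2$, $x_3\mapsto x_3$. Since $S\equiv I\pmod 3$ we get $\Psi(t)=(I,I)$, so $t\in\Gamma_0$. Also $r(1,1,1)=I+3E_{12}\equiv I$, so $r\in H_{\ent}\cap\Gamma_0=H_0$.

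Conjugation gives $t(h,I)t^{-1}=(S(h),I)$. Because $S$ has nonnegative entries, it maps $R_+$ into $R_+$, and hence $H_{\ent}=\EL_3(R_+)$ into itself. Since $\Gamma_0$ is normal, this yields $tH_0t^{-1}\subset H_0$.

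Likewise $t^{-1}rt=(S^{-1}(I+3x_2E_{12}),I)$, and $S^{-1}$ sends $x_2\mapsto x_1^{-3}x_2$. This gives \eqref{eq:strict}. The element is not in $H_0$ because every matrix in $\EL_3(R_+)$ has entries in $R_+$, while $3x_1^{-3}x_2\notin R_+$. It follows that $H_0$ is not normal.

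\emph{Infinite index.} Elements of $H_{\ent}$ have trivial $\SL_3(\ent)$-component, whereas $t^n$ has component $I+3nE_{12}\neq I$ for $n\neq0$. Hence the cosets $t^nH_0$ are pairwise distinct, and $I=\Gamma_0/H_0$ is infinite. It is countable since $\Gamma_0$ is.

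\emph{Infranormality.} Let $P$ be the submonoid generated by $H_{\ent}$ and $\{(I,T):T\in\SL_3(\ent)\text{ with nonnegative entries}\}$. Each such $T$ preserves $R_+$, so the argument above gives $P\subset P_{H_{\ent}}$. I then check that $P$ generates $\Gamma_{\ent}$.

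First, the matrices $I+E_{ij}$ lie in $P$ and generate $\SL_3(\ent)$.

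Next, $N_{\ent}=\EL_3(R)$ is generated by the elementary matrices $e_{ij}(\pm x^v)$ with $v\in\ent^3$. If $v$ has a positive coordinate, add large multiples of it to the other coordinates using products of the $I+E_{kl}$. This produces a nonnegative $T$ with $Tv\in\mathbb N^3$, and then
\[
(e_{ij}(\pm x^v),I)=(I,T)^{-1}\,(e_{ij}(\pm x^{Tv}),I)\,(I,T)
\]
lies in the group generated by $P$. If every coordinate of $v$ is $\le0$ and $v\neq0$, write $v=v'+v''$ with both $v'$ and $v''$ having a positive coordinate. Then use the Steinberg relation
\[
e_{ij}(x^v)=[e_{ik}(x^{v'}),e_{kj}(x^{v''})],\qquad k\ne i,j.
\]
Thus $P$ generates $\Gamma_{\ent}$.

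Since $\Gamma_0\triangleleft\Gamma_{\ent}$ has finite index, Lemma~\ref{lem:monoid} shows that $P\cap\Gamma_0$ generates $\Gamma_0$. Finally, for $p\in P\cap\Gamma_0$ we have $pH_0p^{-1}\subset H_{\ent}\cap\Gamma_0=H_0$, so $P\cap\Gamma_0\subset P_{H_0}$. Hence $H_0$ is infranormal.

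The main obstacle is this generation step: Laurent monomials with all exponents nonpositive cannot be reached by conjugating with nonnegative matrices, and the commutator trick is needed there.
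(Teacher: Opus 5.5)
Your proof is correct and follows the paper's route: the same compression monoid generated by $H_{\ent}$ and the nonnegative substitutions, Lemma~\ref{lem:monoid} to pass to $\Gamma_0$, and the same direct computations for $t$, $r$ and the cosets $t^nH_0$. The only difference is the generation step, where the paper avoids your case split and Steinberg commutator by conjugating $e_{ij}(x_1^k)$ with an arbitrary $S\in\SL_3(\ent)$ such that $Se_1=u$ (writing $v=ku$ with $u$ primitive); this is allowed because all of $\SL_3(\ent)$ already lies in the group generated by the monoid, so the obstacle you flag is self-imposed.
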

\begin{proof}
The compression monoid of \(H_{\ent}\) contains \(H_{\ent}\) and \((I_3,I_3+E_{ij})\), \(i\ne j\). The latter preserve nonnegative exponents and generate the substitution group. For \(v=ku\in\ent^3\setminus\{0\}\), with \(k\geq1\) and \(u\) primitive, choose \(S\in\SL_3(\ent)\) with \(Se_1=u\). Then
\[
S\cdot x_1^k=x^v.
\]
Thus substitution conjugates of \(H_{\ent}\) contain all elementary Laurent monomials and generate \(N_{\ent}\). Lemma~\ref{lem:monoid}, together with normality of \(\Gamma_0\), proves infranormality of \(H_0\).

The substitution \(t\) sends \(x_2\) to \(x_1^3x_2\) and fixes \(x_1,x_3\), giving \eqref{eq:strict}. The powers \(t^j\) have distinct substitution components, whereas every element of \(H_0\) has trivial substitution component. Hence their cosets are distinct.
\end{proof}

\subsection{Two split extensions}\label{sec:groups}
Write \([x,y]=xyx^{-1}y^{-1}\), and put
\[
A=\bigoplus_I\ent,\qquad E=A\rtimes\Gamma_0.
\]
Let \(a_x\) generate the coordinate at \(x\in I\), and set \(a=a_{H_0}\). The action is given by \(ga_xg^{-1}=a_{gx}\).

\begin{proposition}[Theorem {\ref{thm:overview}}, (iii)]\label{prop:wreath}
The group \(E\) is finitely generated, torsion-free and nonhyperlinear. The nonidentity word
\[
w=[r,tat^{-1}]
\]
is killed by every homomorphism from \(E\) to the unitary group of a tracial matrix ultraproduct. Its kernel \(A\) and quotient \(\Gamma_0\) in the defining split extension are residually finite.
\end{proposition}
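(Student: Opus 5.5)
Finite generation comes first. $\Gamma_0$ is finitely generated by Proposition~\ref{prop:pairprops}, and $\Gamma_0$ acts transitively on $I=\Gamma_0/H_0$. Hence $A$ is generated as a $\Gamma_0$-module by the single element $a=a_{H_0}$, since $a_{gH_0}=gag^{-1}$. So $E$ is generated by the generators of $\Gamma_0$ together with $a$.

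Next, torsion-freeness and residual finiteness of the factors. Let $(b,g)\in E$ have finite order $n$. Its image $g$ in $\Gamma_0$ then has finite order, so $g=e$ because $\Gamma_0$ is torsion-free. The element is then $b\in A$, and $A$ is free abelian, so $b=0$. The kernel $A$ is free abelian, hence residually finite. The quotient $\Gamma_0$ is residually finite by Proposition~\ref{prop:pairprops}.

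The identity $w\neq e$ in $E$ is a direct computation. First,
\[
tat^{-1}=a_{tH_0},\qquad r\,a_{tH_0}\,r^{-1}=a_{rtH_0}.
\]
Hence $w=r\,a_{tH_0}\,r^{-1}a_{tH_0}^{-1}$, which in additive notation is the element $a_{rtH_0}-a_{tH_0}$ of $A$. This is nonzero exactly when $rtH_0\neq tH_0$, that is, when $t^{-1}rt\notin H_0$, which is \eqref{eq:strict}.

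The main step is to show that every $\pi:E\to\cU(\mathcal Q_\omega)$ kills $w$. Restrict $\pi$ to $\Gamma_0$. Since $H_0\le\Gamma_0$ are Kazhdan and $H_0$ is infranormal by Proposition~\ref{prop:compression}, Theorem~\ref{thm:normalization} says that $\mathcal C=\pi(H_0)'\cap\mathcal Q_\omega$ is normalized by $\pi(\Gamma_0)$.

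The element $a$ commutes with $H_0$ in $E$, because $ha_{H_0}h^{-1}=a_{hH_0}=a_{H_0}$ for $h\in H_0$. Thus $\pi(a)\in\mathcal C$, and by normalization $\pi(tat^{-1})=\pi(t)\pi(a)\pi(t)^*\in\mathcal C$. Since $r\in H_0$, the unitary $\pi(r)$ commutes with every element of $\mathcal C$. Therefore $\pi(r)$ commutes with $\pi(tat^{-1})$, and so $\pi(w)=1$.

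It follows that $E$ is not hyperlinear. If it were, there would be an injective homomorphism $E\to\cU(\mathcal Q_\omega)$ for some choice of $d_n$ and $\omega$, by Pestov's definition. Such an embedding would have to kill the nontrivial element $w$, which is a contradiction.

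The one point needing care is that Theorem~\ref{thm:normalization} applies as stated. Its hypotheses are that $H_0$ and $\Gamma_0$ are Kazhdan, which is Proposition~\ref{prop:pairprops}, and that $H_0$ is infranormal, which is Proposition~\ref{prop:compression}. Both are already established, so the argument reduces to the computation above. The main obstacle is the observation that $\pi(tat^{-1})$ lies in $\pi(H_0)'$, even though $tat^{-1}$ need not commute with $H_0$ in $E$ itself. This discrepancy is exactly the gap between the strict inclusion $tH_0t^{-1}\subsetneq H_0$ and the equality forced in $\mathcal Q_\omega$ by normalization.
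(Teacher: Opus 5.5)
Your proposal is correct and follows the paper's proof essentially step for step. It uses the same computation $w=a_{rtH_0}a_{tH_0}^{-1}\ne1$ from \eqref{eq:strict}, the same application of Theorem~\ref{thm:normalization} to place $\pi(a)$ and $\pi(tat^{-1})$ in $\pi(H_0)'\cap\mathcal Q_\omega$ (which then commutes with $\pi(r)$), and the same extension arguments for finite generation, torsion-freeness and residual finiteness of $A$ and $\Gamma_0$. You also make explicit two points the paper leaves implicit: transitivity of $\Gamma_0$ on $I$, and the passage from a metric-ultraproduct embedding to an injective map into $\cU(\mathcal Q_\omega)$.
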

\begin{proof}
By \eqref{eq:strict},
\[
w=a_{rtH_0}a_{tH_0}^{-1}\ne1.
\]
For \(\pi:E\to\cU(\mathcal Q_\omega)\), set \(\mathcal D=\pi(H_0)'\cap\mathcal Q_\omega\). Since \([a,H_0]=1\), Theorem~\ref{thm:normalization} gives
\[
\pi(a)\in \mathcal D,\qquad
\pi(tat^{-1})\in\pi(t)\mathcal D\pi(t)^*=\mathcal D,\qquad
\pi(w)=1.
\]
Thus \(E\) is not hyperlinear. A finite generating set of \(\Gamma_0\), together with \(a\), generates \(E\). Both kernel and quotient are torsion-free, so is \(E\). Finally, coordinate reduction modulo integers separates the elements of \(A\).
\end{proof}

Let \(\Delta=\Gamma_0*_{H_0}\Gamma_0\), with vertex maps \(\iota_1,\iota_2\), and identify their common copy of \(H_0\). Put
\[
a_\Delta=\iota_1(t)^{-1}\iota_2(t),\qquad
b_\Delta=\iota_2(t)\iota_1(t)^{-1},\qquad
w_\Delta=[b_\Delta,r].
\]

\begin{proposition}[Theorem {\ref{thm:overview}}, (iii)]\label{prop:double}
The group \(\Delta\) is finitely generated, torsion-free and nonhyperlinear. Every matrix-ultraproduct homomorphism kills \(w_\Delta\ne1\). The fold map gives a split exact sequence
\[
1\longrightarrow \FF_\infty\longrightarrow\Delta\longrightarrow\Gamma_0\longrightarrow1.
\]
Its kernel and quotient are residually finite.
\end{proposition}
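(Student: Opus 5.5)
The plan mirrors the proof of Proposition~\ref{prop:wreath}, using Bass--Serre theory for the algebraic claims and Theorem~\ref{thm:normalization} for nonhyperlinearity.

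\textbf{Algebraic claims.} First, $\Delta$ is finitely generated because it is generated by the images of two copies of the finitely generated group $\Gamma_0$ (Proposition~\ref{prop:pairprops}).

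Next, the fold map $f:\Delta\to\Gamma_0$ restricts to the identity on each $\iota_j(\Gamma_0)$. Hence $\iota_1$ is a section and the sequence splits. The kernel $K=\ker f$ meets each vertex group $\iota_j(\Gamma_0)$ trivially, since $f\circ\iota_j=\id$. Therefore $K$ acts freely on the Bass--Serre tree of the amalgam, so $K$ is free.

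To see that $K\cong\FF_\infty$, I will compute the quotient graph $K\backslash T$. Its vertices are $K\backslash\Delta/\iota_j(\Gamma_0)$, a single point for each $j$, because $\Delta=K\iota_j(\Gamma_0)$. Its edges are $K\backslash\Delta/H_0\cong\Gamma_0/H_0=I$, which is countably infinite by Proposition~\ref{prop:compression}. So the quotient graph has two vertices joined by $|I|$ edges, and its fundamental group is $\FF_\infty$.

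Residual finiteness follows directly: free groups and $\Gamma_0$ are residually finite. Torsion-freeness also follows: a finite-order element of an amalgam is conjugate into a vertex group, and $\Gamma_0$ is torsion-free. Alternatively, the extension of the torsion-free group $\Gamma_0$ by the free group $K$ is torsion-free.

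\textbf{Nonhyperlinearity.} Let $\pi:\Delta\to\cU(\mathcal Q_\omega)$ and set $\mathcal D=\pi(H_0)'\cap\mathcal Q_\omega$, where $H_0$ is the common copy. Apply Theorem~\ref{thm:normalization} to $\pi\circ\iota_1$ and to $\pi\circ\iota_2$; this uses that $H_0\le\Gamma_0$ is Kazhdan and infranormal (Proposition~\ref{prop:compression}). It shows that both $\pi(\iota_1(t))$ and $\pi(\iota_2(t))$ normalize $\mathcal D$.

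Since $t$ normalizes $H_0$ in the weak sense $tH_0t^{-1}\subset H_0$, conjugation by $t$ acts identically in both factors on $H_0$. Hence $b_\Delta=\iota_2(t)\iota_1(t)^{-1}$ commutes with $\iota_1(tht^{-1})$ for all $h\in H_0$, because $\iota_1(tht^{-1})=\iota_2(tht^{-1})$. So $b_\Delta$ centralizes $tH_0t^{-1}$, but not obviously all of $H_0$, and this is the delicate point.

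My approach is to use $\pi(b_\Delta)\mathcal D\pi(b_\Delta)^*=\mathcal D$ and to try to show that $\pi(b_\Delta)$ lies in $\pi(H_0)'$. Concretely:
\begin{itemize}
\item $\pi(\iota_1(t))^*\pi(b_\Delta)\pi(\iota_1(t))=\pi(a_\Delta)$.
\item $a_\Delta=\iota_1(t)^{-1}\iota_2(t)$ commutes with every $h\in H_0$ for which $t h t^{-1}\in H_0$, that is, with all of $H_0$, since $\iota_1(t)h\iota_1(t)^{-1}=\iota_2(t)h\iota_2(t)^{-1}$ lies in the common $H_0$.
\item So $a_\Delta\in C_\Delta(H_0)$, whence $\pi(a_\Delta)\in\mathcal D$.
\item Then $\pi(b_\Delta)=\pi(\iota_1(t))\pi(a_\Delta)\pi(\iota_1(t))^*$, which lies in $\mathcal D$ by the normalization statement for $\iota_1$.
\end{itemize}
Since $r\in H_0$, it follows that $\pi(w_\Delta)=[\pi(b_\Delta),\pi(r)]=1$.

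\textbf{Main obstacle.} The hard step is showing $w_\Delta\ne1$ in $\Delta$. I would use reduced forms in the amalgam: $r$ is a vertex element in $H_0$, and $b_\Delta=\iota_2(t)\iota_1(t)^{-1}$ with $t\notin H_0$.

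Write out
\[
w_\Delta=\iota_2(t)\iota_1(t)^{-1}\,r\,\iota_1(t)\iota_2(t)^{-1}\,r^{-1}.
\]
The middle part $\iota_1(t^{-1}rt)$ is not in $H_0$, by \eqref{eq:strict}. Choosing coset representatives, the whole word is then a reduced alternating word of length at least $3$. By the normal form theorem for amalgams it is therefore nontrivial.

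This gives a nontrivial element $w_\Delta$ killed by every $\pi$, so $\Delta$ is not hyperlinear.
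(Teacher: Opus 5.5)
Your proposal is correct and follows essentially the same route as the paper. The commutation $[a_\Delta,H_0]=1$ together with normalization by $\pi(\iota_1(t))$ puts $\pi(b_\Delta)$ in $\mathcal D$, the length-three reduced word $\iota_2(t)\,\iota_1(t^{-1}rt)\,\iota_2(t^{-1}r^{-1})$ gives $w_\Delta\ne1$, and the free action of the normal kernel on the Bass--Serre tree, with quotient graph having two vertices and $|\Gamma_0/H_0|$ edges, gives $\FF_\infty$. Two details should be stated explicitly: the outer syllables lie outside $H_0$ because $t$ and $t^{-1}r^{-1}$ have nontrivial substitution components, and normality of the kernel is what makes it meet all conjugates of the vertex groups trivially.
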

\begin{proof}
Since \(tH_0t^{-1}\subset H_0\), the amalgam relations give \([a_\Delta,H_0]=1\). Moreover,
\[
w_\Delta=\iota_2(t)\,\iota_1(t^{-1}rt)\,\iota_2(t^{-1}r^{-1})\ne1:
\]
the three syllables lie outside \(H_0\), by \eqref{eq:strict} and their substitution components. Apply \cite[Chapter~I, Section~1.2, Theorem~2]{Serre80}. For \(\pi:\Delta\to\cU(\mathcal Q_\omega)\), set \(\mathcal D=\pi(H_0)'\cap\mathcal Q_\omega\). Theorem~\ref{thm:normalization} gives
\[
\pi(a_\Delta)\in \mathcal D,\qquad
\pi(b_\Delta)=\pi(\iota_1(t))\pi(a_\Delta)\pi(\iota_1(t))^*\in \mathcal D,
\qquad \pi(w_\Delta)=1.
\]

The fold map splits through either vertex. Its kernel acts freely on the Bass--Serre tree; the quotient graph has two vertices and one edge for each element of \(\Gamma_0/H_0\). Thus the kernel is \(\FF_\infty\), hence residually finite. Every torsion element is conjugate into a vertex group, so \(\Delta\) is torsion-free; the two vertices also give finite generation. See \cite[Chapter~I, Theorem~4', Proposition~18 and Theorem~8]{Serre80} for these tree-action facts.
\end{proof}

The nonhyperlinearity of \(\Delta\) also follows from \cite[Corollary C]{ALT26}.

\begin{corollary}\label{cor:nonqwep}
Both \(\L(E)\) and \(\L(\Delta)\) are non-QWEP, although all four kernels and quotients displayed above are hyperlinear and sofic.
\end{corollary}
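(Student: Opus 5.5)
The corollary combines the bridge \eqref{eq:bridge} with Propositions~\ref{prop:wreath} and~\ref{prop:double}, together with a check of hyperlinearity and soficity for the four auxiliary groups $A$, $\Gamma_0$ (twice) and $\FF_\infty$.

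For non-QWEP, I will apply \eqref{eq:bridge} to the countable groups $E$ and $\Delta$. Proposition~\ref{prop:wreath} shows that $E$ is not hyperlinear: the nontrivial word $w$ dies under every homomorphism into $\cU(\mathcal Q_\omega)$. Since $\mathcal Q_\omega$ is a tracial matrix ultraproduct, no homomorphism $E\to\cU(\mathcal Q_\omega)$ is injective, so $E$ fails Pestov's definition. One should note that hyperlinearity is usually phrased as an injective homomorphism into such a unitary group, possibly with a distance condition. Any injective one suffices here, because the ultraproduct over a fixed nonprincipal $\omega$ and suitable dimensions $d_n$ can be arranged; alternatively, Connes embeddability of $\L(E)$ gives a trace-preserving embedding into some $\mathcal Q_\omega$. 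Either way this contradicts $\pi(w)=1$. Hence $\L(E)$ is not Connes embeddable, and not QWEP by \eqref{eq:bridge}. The argument for $\Delta$ is identical, using $w_\Delta$ and Proposition~\ref{prop:double}.

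For the positive half, I will handle the four groups as follows.
\begin{itemize}
\item $A$ is abelian, hence amenable.
\item $\FF_\infty$ is residually finite, as stated in Proposition~\ref{prop:double}.
\item $\Gamma_0$ is residually finite by Proposition~\ref{prop:pairprops}.
\end{itemize}
Amenable groups and residually finite groups are sofic, being respectively F{\o}lner-approximable and approximable by finite quotients. Sofic groups are hyperlinear, via permutation matrices inside unitary matrices with the normalized Hamming and Hilbert--Schmidt metrics comparable. Alternatively, residual finiteness of all four already appears in Propositions~\ref{prop:wreath} and~\ref{prop:double}, which gives soficity directly.

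The only delicate point is matching the ultraproduct $\mathcal Q_\omega$ in the normalization theorem with the one witnessing a putative Connes embedding. Since $\L(E)$ has separable predual, any Connes embedding can be realized in $\prod_{n\to\omega}\bM_{n}$ for the fixed $\omega$, and the normalization argument applies there.
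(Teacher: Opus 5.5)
Your proposal is correct and matches the paper's argument: $E$ and $\Delta$ are nonhyperlinear by Propositions~\ref{prop:wreath} and~\ref{prop:double}, so \eqref{eq:bridge} shows that $\L(E)$ and $\L(\Delta)$ are non-QWEP, while $A$, $\FF_\infty$ and $\Gamma_0$ are residually finite, hence sofic and hyperlinear. The paper states this as a one-line combination; your remarks on matching the ultraproduct are harmless but unnecessary, since the normalization theorem holds for every $\mathcal Q_\omega$.
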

\begin{proof}
Apply \eqref{eq:bridge} and residual finiteness of the kernels and quotients.
\end{proof}

\section{Failure of QWEP stability}\label{sec:ordinary}
The trace-preserving identification
\begin{equation}\label{eq:semidirect}
\L(K\rtimes G)\cong\L(K)\rtimes G
\end{equation}
applied to the preceding split extensions gives
\[
\L(E)\cong \M\rtimes\Gamma_0,\qquad
\M=\L(A)\cong L_\infty(\T^I,m^I),\qquad
\L(\Delta)\cong\L(\FF_\infty)\rtimes\Gamma_0.
\]
Both coefficient algebras and \(\L(\Gamma_0)\) are QWEP, whereas the crossed products are non-QWEP by Corollary~\ref{cor:nonqwep}. Denote the induced action on $\M$ by $\beta$.

\begin{theorem}[Theorem {\ref{thm:overview}}, (i)]\label{thm:hyperfinite}
The group \(\Gamma_0\) admits an outer trace-preserving action on the hyperfinite II$_1$ factor \(\R_0\) whose crossed product is a non-QWEP II$_1$ factor.
\end{theorem}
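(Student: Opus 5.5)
Starting from the non-QWEP crossed product $\L(E)\cong\M\rtimes_\beta\Gamma_0$, I will tensor the coefficient algebra with a hyperfinite factor on which $\Gamma_0$ acts outerly, and then identify the resulting coefficient algebra with $\R_0$. Let $\gamma:\Gamma_0\curvearrowright\R_0$ be the noncommutative Bernoulli shift on $\R_0\cong\barot_{\Gamma_0}(\bM_2,\tr_2)$. This action is trace-preserving and outer, indeed properly outer for every $g\neq e$, because $\Gamma_0$ is infinite (Proposition~\ref{prop:pairprops}, torsion-freeness). Consider the diagonal action $\beta\otimes\gamma$ of $\Gamma_0$ on $\N=\M\barot\R_0$. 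Since $\M$ is diffuse abelian with separable predual, $\N$ is hyperfinite. A Bernoulli factor tensored with an abelian algebra on which the group acts freely is still a factor, but freeness of $\beta$ is not needed here. The tensor with $\R_0$ alone already gives $\N'\cap\N\supseteq$ ... in fact, the center of $\N$ is $\M\otimes1$. So $\N$ is not a factor, and I must fix this.

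To get a factor, I will pass instead to $\R_0\cong\M\barot\R_0\barot\ldots$? A cleaner route runs through the crossed product. I plan to realise the crossed product as a corner-free tensor product and use ergodicity. First, $\M\rtimes_\beta\Gamma_0$ should itself be a factor. The action $\beta$ on $L_\infty(\T^I)$ is the generalized Bernoulli shift over the infinite set $I=\Gamma_0/H_0$, and it is free and ergodic if the orbits of the $\Gamma_0$-action on $I$ are infinite. This holds provided each $g\neq e$ moves infinitely many cosets; I would verify it using that $H_0$ has infinitely many conjugates contained in each other via $t$ (Proposition~\ref{prop:compression}). Alternatively I will avoid this by replacing $\beta$ with $\beta\otimes\gamma$ on $\N=\M\barot\R_0$, where $\gamma$ is the ordinary Bernoulli shift. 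The diagonal action on $\N$ is ergodic on the center and properly outer, since the $\gamma$ part is already free and mixing. Then $\N\rtimes\Gamma_0$ is a II$_1$ factor.

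The next step is to identify $\N$ with $\R_0$. This fails because $\N$ has center $\M$. So I switch to a single-factor model: take $\R_0$ itself as the coefficient algebra with the action $\theta=\beta\otimes\gamma$ transported through an isomorphism. The correct fix is to note that $\M\rtimes\Gamma_0$ embeds with expectation into $(\N)\rtimes\Gamma_0$, via $x\lambda(g)\mapsto (x\otimes1)\lambda(g)$ with conditional expectation $\id\otimes\tau$. Hence $\N\rtimes\Gamma_0$ is non-QWEP, since QWEP passes to expected subalgebras. For the hyperfinite factor I instead use $\N_1=\M\barot\R_0$ acted on by $\beta\otimes\gamma$, and then the infinite tensor power trick. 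Let $\theta=\barot_{n}(\beta\otimes\gamma)$ act diagonally on $\barot_n\N$. This is still non-factorial. The decisive choice is therefore to make $\M$ itself disappear: realise $\beta$ as the Bernoulli shift $\Gamma_0\curvearrowright\barot_I(L_\infty(\T))$ and replace the base $L_\infty(\T)$ by the hyperfinite factor $\R_0$. Concretely, the generalized noncommutative Bernoulli action $\Gamma_0\curvearrowright\barot_I\R_0\cong\R_0$ contains $\barot_I L_\infty(\T)\cong\M$ as an invariant subalgebra, taking a maximal abelian $L_\infty(\T)\subset\R_0$ in each factor. That subalgebra has a $\Gamma_0$-equivariant trace-preserving conditional expectation, the tensor product of the coordinate expectations. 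Hence $\M\rtimes_\beta\Gamma_0=\L(E)$ embeds with expectation into $\R_0\rtimes\Gamma_0$, and the latter is non-QWEP.

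It remains to prove outerness and factoriality. For factoriality I will use that the generalized Bernoulli action over $I$ is properly outer and ergodic as soon as every nontrivial element has infinite support on $I$. For outerness, the presence of infinitely many moved coordinates makes $\theta_g$ non-inner, by the standard asymptotic-centrality argument for tensor shifts. The main obstacle is showing that each $g\neq e$ in $\Gamma_0$ moves infinitely many points of $\Gamma_0/H_0$. If that is hard, I will tensor additionally with the classical Bernoulli shift $\barot_{\Gamma_0}\R_0$. The combined action on $\barot_I\R_0\barot\barot_{\Gamma_0}\R_0\cong\R_0$ is then free, mixing and outer, while the expected subalgebra argument above is unchanged.
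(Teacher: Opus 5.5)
Once the abandoned attempts in your first two paragraphs are discarded ($\M\barot\R_0$ and its tensor powers are never factors, as you noticed), your construction is exactly the paper's. You identify $\beta$ with the coordinate shift on $\M\cong\barot_I L_\infty(\T)$ and place each coordinate $L_\infty(\T)$ as a diffuse abelian subalgebra of a copy of $\R_0$. Then $\beta$ extends to the permutation action on $\R_I=\barot_I\R_0\cong\R_0$, and the equivariant expectation $\barot_I\E_0$ exhibits $\L(E)\cong\M\rtimes_\beta\Gamma_0$ as an expected subalgebra of the crossed product, which is therefore non-QWEP.

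The routes differ in the proof of outerness. Your main line needs every $g\neq e$ to move infinitely many points of $I=\Gamma_0/H_0$, and you leave this unproved. The nesting $tH_0t^{-1}\subset H_0$ says nothing about the core $\bigcap_\gamma\gamma H_0\gamma^{-1}$, so it cannot supply it. It is also more than is needed. Since every coordinate carries a whole copy of $\R_0$, a single coordinate $i$ with $gi\neq i$ already gives a central sequence of trace-zero symmetries $z_n$ inside that coordinate with $\normtwo{\widetilde\beta_g(z_n)-z_n}^2=2$, so $\widetilde\beta_g$ is not inner. Thus the paper only needs the coset action to be faithful. It proves this in Lemma~\ref{lem:faithful-outer} by showing that the core of $H_0$ is trivial, using polynomiality under the substitutions $I_3+3kE_{ij}$ and under conjugation by $I_3+3x_1^{-1}E_{ij}$.

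Your fallback is a valid complete alternative: take the diagonal action on $\R_I\barot(\barot_{\Gamma_0}\R_0)\cong\R_0$, with the ordinary Bernoulli shift in the second factor.
\begin{itemize}
\item Left translation moves every coordinate of $\Gamma_0$, so the same one-coordinate central-sequence argument shows that every $\theta_g$, $g\neq e$, is outer.
\item Outerness of each $\theta_g$ on a II$_1$ factor makes the crossed product a factor.
\item $\E_{\M}\otimes\tau$ is still an equivariant expectation onto $\M$, so $\L(E)$ remains an expected subalgebra.
\end{itemize}
This route avoids the group-theoretic lemma on $H_0$ entirely. The price is that the canonical coordinate-permutation extension of $\beta$ is replaced by a less natural action, and you learn nothing about the coset action itself. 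One claim in that paragraph is false, though you never use it: the combined action is not mixing. The stabilizer $gH_0g^{-1}$ of $gH_0\in I$ is infinite and fixes the corresponding coordinate of $\R_I$.
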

\begin{proof}
Choose a Haar unitary \(z\in \R_0\), and let \(\E_0:\R_0\to B_0=W^*(z)\) be the trace-preserving expectation. Then
\[
\M\cong\barot_{i\in I}B_0\subset
\R_I=\barot_{i\in I}(\R_0,\tau_0)\cong \R_0.
\]
The coordinate-permutation action \(\widetilde\beta\) extends \(\beta\), and \(\E_\M=\barot_I \E_0\) is equivariant. Hence
\[
\E_\M\rtimes\id:\R_I\rtimes_{\widetilde\beta}\Gamma_0
\longrightarrow \M\rtimes_\beta\Gamma_0,\qquad
\sum_gx_g\lambda(g)\longmapsto\sum_g\E_\M(x_g)\lambda(g)
\]
is a trace-preserving conditional expectation. Its range is \(\L(E)\), so the domain is non-QWEP. The following lemma proves outerness and factoriality.
\end{proof}

\begin{lemma}\label{lem:faithful-outer}
The coset action \(\Gamma_0\curvearrowright I\) is faithful, and its permutation action on \(\R_I\) is outer. Consequently \(\R_I\rtimes\Gamma_0\) is a II$_1$ factor.
\end{lemma}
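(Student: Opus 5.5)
We will prove the three claims in order: the coset action is faithful, the Bernoulli-type permutation action on $\R_I$ is outer, and the crossed product is a II$_1$ factor.

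\textbf{Faithfulness.} An element $g\in\Gamma_0$ acts trivially on $I=\Gamma_0/H_0$ exactly when $g$ lies in every conjugate $xH_0x^{-1}$. So the kernel of the coset action is the normal core $K=\bigcap_{x\in\Gamma_0}xH_0x^{-1}$. It therefore suffices to show that this core is trivial.

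Every element of $H_0$ has trivial substitution component, so $K\subseteq N_{\ent}\cap\Gamma_0$, and $K$ is normalized by the substitutions $t_S=(I_3,S)$ with $S\in\SL_3(\ent)$, $S\equiv I\pmod 3$. Take $g\in K$ with $g\ne I$. Then $g$, and every conjugate $t_S g t_S^{-1}$, must have entries in $R_+=\ent[x_1,x_2,x_3]$. Conjugation by $t_S$ applies the substitution $x^v\mapsto x^{Sv}$ to the entries of $g$.

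The main step is to find $S\equiv I\pmod 3$ such that some exponent vector $v$ occurring in $g-I$ has a negative coordinate in $Sv$. Such an $S$ gives $t_Sgt_S^{-1}\notin H_{\ent}$, a contradiction. I would argue as follows.
\begin{itemize}
\item If $g-I$ has a nonconstant monomial with exponent $v\ne0$, the $\SL_3(\ent)$-orbit of $v$ in the congruence subgroup is infinite. Shears such as $I+3kE_{ij}$ with $k\in\ent$ of either sign then make some coordinate of $Sv$ negative.
\item If every entry of $g-I$ is constant, then $g\in\SL_3(\ent)$ is a nontrivial integral matrix in $N_{\ent}$. Conjugation by $r$-type elements, as in \eqref{eq:strict}, produces a commutator with nonconstant entries that still lies in $K$. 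This reduces to the first case.
\end{itemize}
I expect this second case to be the main obstacle. A cleaner alternative is to note that $K$ is normal in $\Gamma_0$ and invariant under $x_i\mapsto x_i^{-1}$-type substitutions in the congruence group. Combining this with $K\subseteq\EL_3(R_+)$ forces entries in $R_+\cap\ent[x_1^{-1},x_2^{-1},x_3^{-1}]=\ent$. Then $K$ is a normal subgroup of $\Gamma_0$ made of constant matrices, and conjugating by $r$ shows it is trivial.

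\textbf{Outerness.} Since $I$ is infinite and the action is faithful, each $g\ne e$ moves infinitely many points of $I$. For a generalized Bernoulli action $\R_I=\barot_I\R_0$, this implies outerness by the standard argument. Suppose $\widetilde\beta_g=\Ad u$ with $u\in\R_I$, and approximate $u$ in $\normtwo{\cdot}$ by an element $u_0$ supported on a finite set $F\subset I$. Choose $i\notin F$ with $gi\notin F$ and $gi\ne i$, and a trace-zero unitary $v$ in the coordinate $i$. Then
\[
\normtwo{\widetilde\beta_g(v)u-uv}=0,
\]
while $\widetilde\beta_g(v)u_0-u_0v$ has $2$-norm close to $\sqrt2\,\normtwo{u_0}$, because the supports are independent. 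This is a contradiction.

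\textbf{Factoriality.} $\R_0$ is a factor and the action is outer (properly outer, since $\R_I$ is a factor). Hence $\R_I'\cap(\R_I\rtimes\Gamma_0)=\com$, so the crossed product is a factor. It is finite with a faithful normal trace and infinite-dimensional, hence a II$_1$ factor.
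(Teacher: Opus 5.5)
\textbf{Gap in the outerness step.} The gap is the sentence ``since $I$ is infinite and the action is faithful, each $g\neq e$ moves infinitely many points of $I$.'' This inference is false in general. The subgroup of $\mathrm{Sym}(\ent)$ generated by the shift and one transposition is finitely generated and acts faithfully and transitively on $\ent$. Yet it contains transpositions, which move only two points.

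Your Bernoulli argument genuinely needs the claim. For an arbitrary finite $F\subset I$ you must find $i$ with $gi\neq i$ and $i,gi\notin F$.

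For $\Gamma_0$ the claim is true, but the reason is the torsion-freeness in Proposition~\ref{prop:pairprops}. Suppose $g$ moved only a finite set $M$. Then $g$ permutes $M$, so $g^{|M|!}$ acts trivially on $I$. Faithfulness gives $g^{|M|!}=e$, and torsion-freeness gives $g=e$. With this added, your estimate is correct; independence even gives $\normtwo{\widetilde\beta_g(v)u_0-u_0v}=\sqrt2\,\normtwo{u_0}$ exactly.

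The paper avoids the issue. It writes $\R_0=\barot_{n\geq1}(\bM_2,\tr_2)$, takes a single $i$ with $gi\neq i$, and uses the central sequence $z_n=\mathrm{diag}(1,-1)$ in the $n$-th matrix factor of coordinate $i$. Then $\normtwo{[x,z_n]}\to0$ for all $x\in\R_I$, while $\normtwo{\widetilde\beta_g(z_n)-z_n}^2=2$. This needs only one moved point and no torsion-freeness. Your factoriality step is the same as the paper's.

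\textbf{Faithfulness.} Your argument follows the paper. Shears $I_3+3kE_{ij}$ permute monomials bijectively, so nothing cancels, and they force the entries of a core element to be constant. Conjugation by elementary matrices with a variable entry then handles the constant case.

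The paper conjugates by $q_{ij}=I_3+3x_1^{-1}E_{ij}$. The result $B+3x_1^{-1}(E_{ij}B-BE_{ij})-9x_1^{-2}E_{ij}BE_{ij}$ is non-polynomial unless $[E_{ij},B]=0$, so no reduction to the first case is needed. Your positive-power version also works, but state two things explicitly. You conjugate by $I_3+3x_2E_{ij}$ for all $i\neq j$, not just by $r$. A constant $B$ commuting with every $E_{ij}$ is $cI_3$ with $c\in\ent$ and $c^3=1$, hence $B=I_3$.

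The ``cleaner alternative'' is not correct as written. Inversions $x_i\mapsto x_i^{-1}$ are not substitutions in the level-$3$ congruence group. A single inversion has determinant $-1$, and $\mathrm{diag}(-1,-1,1)$ is a torsion element not congruent to $I_3$ modulo $3$. The main line does not need this alternative.
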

\begin{proof}
Let \(C=\bigcap_{\gamma\in\Gamma_0}\gamma H_0\gamma^{-1}\). For \((B,I_3)\in C\), normality implies that every entry of \(B\) remains polynomial under
\[
x^v\longmapsto x^{(I_3+3kE_{ij})v},\qquad k\in\ent,\quad i\ne j.
\]
If a nonconstant monomial \(x^v\) occurs, choose \(v_j\ne0\), \(i\ne j\), and \(k\) with \(v_i+3kv_j<0\). Distinct monomials remain distinct, contradicting polynomiality. Thus \(B\) is constant.

For \(i\ne j\), put \(q_{ij}=I_3+3x_1^{-1}E_{ij}\). Since \((q_{ij},I_3)\in\Gamma_0\),
\[
q_{ij}Bq_{ij}^{-1}
=B+3x_1^{-1}(E_{ij}B-BE_{ij})-9x_1^{-2}E_{ij}BE_{ij}
\]
is polynomial. Hence \([E_{ij},B]=0\) for all \(i\ne j\), so \(B=cI_3\). As \(c\in\ent\) and \(\det B=1\), we have \(B=I_3\) and \(C=\{1\}\).

Write \(\R_0=\barot_{n\geq1}(\bM_2,\tr_2)\). For \(g\ne e\), choose \(i\in I\) with \(gi\ne i\), and let \(z_n=\operatorname{diag}(1,-1)\) in the \(n\)-th matrix factor of coordinate \(i\). Then
\[
\normtwo{[x,z_n]}\longrightarrow0\quad(x\in \R_I),\qquad
\normtwo{\widetilde\beta_g(z_n)-z_n}^2=2.
\]
Thus \(\widetilde\beta_g\) is outer. If \(\sum_gx_g\lambda(g)\) is central, then
\[
x_g\widetilde\beta_g(y)=yx_g\qquad(y\in \R_I).
\]
Outerness gives \(x_g=0\) for \(g\ne e\), and \(x_e\in\com\). The crossed product is therefore a finite diffuse factor.
\end{proof}

%\begin{proposition}[A necessary condition in the finite setting]\label{prop:necessary}
%Let \((\M,\tau)\) be a finite von Neumann algebra with faithful normal tracial state, and let \(G\) act preserving \(\tau\). If \(\M\rtimes G\) is QWEP, then both \(\M\) and \(\L(G)\) are QWEP.
%\end{proposition}
%\begin{proof}
%The canonical trace gives conditional expectations onto \(\M\) and \(\L(G)\), respectively:
%\[
%\sum_gx_g\lambda(g)\longmapsto x_e,
%\qquad
%\sum_gx_g\lambda(g)\longmapsto\sum_g\tau(x_g)\lambda(g).
%\]
%Apply QWEP permanence under conditional expectations.
%\end{proof}

\subsection{A mixed scalar cocycle}\label{sec:mixed}
Let \(\M=\L(A)\) and \(G=\Gamma_0\times\ent\), with \(\beta_{(g,m)}=\beta_g\). For \(g=(B,S)\in\Gamma_0\), put
\[
\chi(g)=\zeta^{S_{12}/3},\qquad \zeta=e^{2\pi i/3}.
\]
This is a character, since
\[
S=I_3+3X,\quad T=I_3+3Y
\quad\Longrightarrow\quad (ST)_{12}/3\equiv X_{12}+Y_{12}\pmod3.
\]
Moreover \(\chi(t)=\zeta\). Define the normalized scalar cocycle
\begin{equation}\label{eq:mixedsigma}
\sigma((g,m),(h,n))=\chi(g)^n1_\M.
\end{equation}

\begin{theorem}[Theorem {\ref{thm:overview}}, (ii)]\label{thm:mixed}
Both \(\M\) and \(\L(G)\) are QWEP, whereas
\[
\R_\sigma=\M\rtimes_{\beta,\sigma}G
\]
is non-QWEP. The scalar cocycle \(\sigma\) cannot be removed by a cochain with values in \(\cU(\M)\).
\end{theorem}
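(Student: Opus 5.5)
The plan has three parts: the two QWEP claims, an expectation onto a non-QWEP subalgebra, and a commutator obstruction for the cocycle.

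\emph{QWEP of $\M$ and $\L(G)$.} $\M=\L(A)$ is abelian, hence QWEP. $G=\Gamma_0\times\ent$ is residually finite, since $\Gamma_0$ is by Proposition~\ref{prop:pairprops}. Hence $\L(G)$ is QWEP by \eqref{eq:bridge}.

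\emph{Non-QWEP of $\R_\sigma$.} For $g,h\in\Gamma_0$ we have $\sigma((g,0),(h,0))=\chi(g)^0=1$. So the von Neumann subalgebra of $\R_\sigma$ generated by $\M$ and the unitaries $\lambda_\sigma(g,0)$, $g\in\Gamma_0$, is an untwisted copy of $\M\rtimes_\beta\Gamma_0$. I would identify it with $\M\rtimes_\beta\Gamma_0$ by comparing canonical traces, which are determined by the Fourier coefficient at $e$. By \eqref{eq:semidirect}, this subalgebra is $\L(E)$. The map
\[
\sum_{(g,m)}x_{(g,m)}\lambda_\sigma(g,m)\longmapsto\sum_g x_{(g,0)}\lambda_\sigma(g,0)
\]
is the trace-preserving conditional expectation onto it. If $\R_\sigma$ were QWEP, then $\L(E)$ would be QWEP by \cite[Proposition~4.1]{Ozawa04}, contradicting Corollary~\ref{cor:nonqwep}.

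\emph{Non-removability.} Suppose $u:G\to\cU(\M)$ gives unitaries $v(x)=u(x)\lambda_\sigma(x)$ satisfying $v(x)v(y)=v(xy)$. Take $y=(e,1)$ and $x=(g,0)$ with $g\in\Gamma_0$. These commute in $G$, so $v(x)v(y)v(x)^*v(y)^*=1$. On the other hand,
\[
\sigma(x,y)=\chi(g),\qquad \sigma(y,x)=1,\qquad \beta_y=\id .
\]
Since $\M$ is abelian, I expect the routine computation to give
\[
v(x)v(y)v(x)^*v(y)^*=\chi(g)\,\beta_g(w)\,w^*,\qquad w=u(y).
\]
Thus $\beta_g(w)=\overline{\chi(g)}\,w$ for every $g\in\Gamma_0$, with $w$ a unitary, hence nonzero, in $L_2(\M)=\ell_2(A)$.

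The main step is to rule out such an eigenvector. Since $\beta$ permutes the orthonormal basis $\{u_a\}_{a\in A}$ and $|w(ga)|=|w(a)|$, any $a\in A$ with $w(a)\ne0$ has a finite $\Gamma_0$-orbit. I claim only $a=0$ has a finite orbit. Otherwise the finite support $F\subset I$ of $a$ has a finite-index stabiliser $\Gamma_1$, so some point of $I$, say $H_0$ after translating, has finite $\Gamma_1$-orbit. Then $H_0\cap\Gamma_1$ has finite index in $\Gamma_1$, so $H_0$ has finite index in $\Gamma_0$. Conjugation preserves index, so $tH_0t^{-1}\subset H_0$ forces $tH_0t^{-1}=H_0$. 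This contradicts \eqref{eq:strict}, since $t^{-1}rt\notin H_0$ means $H_0\not\subset t^{-1}H_0t$. Hence $w$ is a nonzero scalar multiple of $u_0=1$, so $\beta_g(w)=w$. This gives $\chi(g)=1$ for all $g\in\Gamma_0$, contradicting $\chi(t)=\zeta$.
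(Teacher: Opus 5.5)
Your proposal is correct and follows the paper's proof essentially step by step. You use commutativity and residual finiteness for the QWEP claims, the expected copy $\L(E)\cong\M\rtimes_\beta\Gamma_0$ for non-QWEP, and the eigenvector equation $\beta_g(w)=\overline{\chi(g)}\,w$, which infinite orbits in $\ell_2(A)$ rule out. The only variation is that you derive $[\Gamma_0:H_0]=\infty$ from the strict compression $tH_0t^{-1}\subsetneq H_0$, whereas the paper notes that a finite orbit would give a finite nonempty invariant subset of the infinite transitive set $I=\Gamma_0/H_0$ (Proposition~\ref{prop:compression}); both arguments are valid.
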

\begin{proof}
The first assertion follows from commutativity of \(\M\) and residual finiteness of \(G\). Since \(\sigma|_{\Gamma_0\times\Gamma_0}=1\), there is a trace-preserving expected inclusion
\[
\L(E)\cong \M\rtimes\Gamma_0\subset \R_\sigma.
\]
Hence \(\R_\sigma\) is non-QWEP.

Put \(U_g=\lambda_\sigma(g,0)\) and \(V=\lambda_\sigma(e,1)\). Then \(U_gV=\chi(g)VU_g\). If \((b_x)_{x\in G}\subset\cU(\M)\) removed the cocycle, the corrected implementers for $(g,0)$ and $(e,1)$ would commute. Since \(\M\) is abelian and \(\ent\) acts trivially,
\begin{equation}\label{eq:eigenunitary}
\beta_g(b_{(e,1)})=\chi(g)^{-1}b_{(e,1)}.
\end{equation}
Write \(b_{(e,1)}=\sum_{a\in A}c_a\lambda(a)\) in \(L_2(\M)\). Then
\[
|c_{ga}|=|c_a|,\qquad \sum_{a\in A}|c_a|^2<\infty.
\]
Every \(a\ne0\) has infinite orbit: otherwise the union of its orbit supports would be a finite nonempty invariant subset of the infinite transitive set \(I\). Thus \(c_a=0\) for \(a\ne0\). Equation~\eqref{eq:eigenunitary} now contradicts \(\chi(t)\ne1\).
\end{proof}

\subsection{The rotation tensor-factor comparison}
Let \(Q=\Gamma_0\times\ent^2\) act on \(\M=\L(A)\) through \(\Gamma_0\). For \(\theta\in\real\setminus\rat\), define
\begin{equation}\label{eq:sigma}
\sigma_\theta((g,m,n),(h,p,q))=e^{2\pi i\theta np}1_\M.
\end{equation}
This is a scalar cocycle, and \(Q\) is torsion-free and residually finite.

\begin{proposition}\label{prop:scalar}
There is a trace-preserving isomorphism
\[
\M\rtimes_{\beta,\sigma_\theta}Q
\cong\L(E)\barot\L_{\sigma_\theta}(\ent^2).
\]
This crossed product is non-QWEP. Moreover \(\sigma_\theta\) is not a coboundary for this action with values in \(\cU(\M)\).
\end{proposition}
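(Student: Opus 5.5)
The plan is to prove the three assertions of Proposition~\ref{prop:scalar} in order: the isomorphism, non-QWEP, and the non-removability of the cocycle.

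\emph{The isomorphism.} Since $Q=\Gamma_0\times\ent^2$, $\ent^2$ acts trivially on $\M$, and $\sigma_\theta$ depends only on the $\ent^2$-coordinates, the implementers separate. Put $U_g=\lambda_{\sigma_\theta}(g,0,0)$, $V_1=\lambda_{\sigma_\theta}(e,1,0)$ and $V_2=\lambda_{\sigma_\theta}(e,0,1)$. Each $U_g$ commutes with $V_1$ and $V_2$ because $\sigma_\theta$ is trivial on pairs involving $\Gamma_0$. The $V_i$ commute with $\M$, and $V_2V_1=e^{2\pi i\theta}V_1V_2$. Hence $\M$ together with the $U_g$ generates a copy of $\M\rtimes_\beta\Gamma_0\cong\L(E)$, by \eqref{eq:semidirect}. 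The $V_i$ generate a copy of $\L_{\sigma_\theta}(\ent^2)$, and the two copies commute.

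To obtain a trace-preserving isomorphism, compare traces on products $x\,U_g\,V_1^mV_2^n$. Such a product equals a scalar multiple of $x\lambda_{\sigma_\theta}(g,m,n)$, so $\widehat\tau$ vanishes unless $(g,m,n)=e$, in which case it is $\tau(x)$. This is exactly the product trace on $\L(E)\barot\L_{\sigma_\theta}(\ent^2)$. Because the linear spans are weak-$*$ dense and both traces are faithful, the standard GNS uniqueness argument extends the map on generators to a normal trace-preserving $*$-isomorphism.

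\emph{Non-QWEP.} This follows as in Theorem~\ref{thm:mixed}. The tensor factor $\L(E)\otimes1$ is the range of the trace-preserving conditional expectation $\id\otimes\tau$. QWEP passes to expected subalgebras, so QWEP of the crossed product would force $\L(E)$ to be QWEP, contradicting Corollary~\ref{cor:nonqwep}.

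\emph{Non-coboundary.} Argue as in the proof of Theorem~\ref{thm:mixed}. Suppose $(b_x)\subset\cU(\M)$ corrected the implementers to a genuine action. Then the corrected unitaries $b_{(e,1,0)}V_1$ and $b_{(e,0,1)}V_2$ would commute. Since $\M$ is abelian and commutes with the $V_i$, this gives $b_1b_2=e^{2\pi i\theta}b_2b_1$ with $b_1,b_2\in\M$ commuting, that is, $(1-e^{2\pi i\theta})b_1b_2=0$. This is impossible for irrational $\theta$ and unitaries $b_1,b_2$.

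This last step is even easier than in the mixed case, because it needs no orbit argument on $A$. The main obstacle, if any, is the bookkeeping check that the normal extension of the isomorphism is well defined; the trace comparison above handles it.
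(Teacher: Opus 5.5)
Your proof is correct and follows the paper's argument. The commuting families $\M\cup\{U_g\}$ and $\{V_1,V_2\}$, whose traces multiply, give the isomorphism, and $\id\barot\tau_{\sigma_\theta}$ gives the expected copy of $\L(E)$. Your relation $b_1b_2=e^{2\pi i\theta}b_2b_1$ is the implementer form of the paper's observation that $\cU(\M)$-valued coboundaries on the trivially acting $\ent^2$ are symmetric while $\sigma_\theta(e_2,e_1)\ne\sigma_\theta(e_1,e_2)$; that last step only needs $e^{2\pi i\theta}\neq1$, not irrationality.
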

\begin{proof}
The two commuting families \(\M,\{\lambda_{\sigma_\theta}(g,0,0)\}\) and \(\{\lambda_{\sigma_\theta}(e,m,n)\}\) generate the respective tensor factors, and their canonical traces multiply. This gives the isomorphism. Applying \(\id\barot\tau_{\sigma_\theta}\) yields the non-QWEP expected subalgebra \(\L(E)\).

On \(\{e\}\times\ent^2\), the action is trivial and \(\M\) is abelian. Every \(\cU(\M)\)-valued coboundary \(b_sb_tb_{s+t}^*\) is therefore symmetric. In contrast,
\[
\sigma_\theta(e_2,e_1)=e^{2\pi i\theta}\ne1=\sigma_\theta(e_1,e_2).
\]
\end{proof}

\section{Scalar cocycles}\label{sec:twisted}
We now compare scalar-twisted crossed products with ordinary crossed products using the absorption principle in \cite[Proposition~2.6]{xxz26}. We first give the diagonal embeddings for a general von Neumann algebra.

\begin{proposition}\label{prop:diagonal}
Let $\alpha:G\curvearrowright \M$ be an action and let $\sigma\in Z^2(G,\T)$. Put $\R_1=\M\rtimes_\alpha G$ and $\R_\sigma=\M\rtimes_{\alpha,\sigma}G$. There are faithful normal embeddings with normal conditional expectations
\[
\R_\sigma\hookrightarrow \R_1\barot\L_\sigma(G),
\qquad
\R_1\hookrightarrow \R_\sigma\barot\L_{\bar\sigma}(G).
\]
Consequently, if $\L_\sigma(G)$ is QWEP, then $\R_\sigma$ is QWEP if and only if $\R_1$ is QWEP.
\end{proposition}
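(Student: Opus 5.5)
We plan to build both embeddings from a single diagonal (comultiplication-type) map, and then get the QWEP equivalence from the permanence properties recalled in Section~\ref{sec:input}. Represent $\R_1$ on $\ell_2(G,H)$ with generators $\pi_\alpha(x)$ and $\lambda(g)$, and represent $\L_\sigma(G)$ on $\ell_2(G)$. In $\R_1\barot\L_\sigma(G)$ consider the elements
\[
\widetilde x=x\otimes1,\qquad u_g=\lambda(g)\otimes\lambda_\sigma(g).
\]
A direct check gives
\[
u_gu_h=\sigma(g,h)u_{gh},\qquad u_g\widetilde xu_g^*=\widetilde{\alpha_g(x)}.
\]
Thus the pair $(\widetilde{\,\cdot\,},u)$ is a covariant $\sigma$-representation of $(\M,\alpha,G)$. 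The same construction with $\lambda_\sigma(g)\otimes\lambda_{\bar\sigma}(g)$ gives a covariant untwisted representation inside $\R_\sigma\barot\L_{\bar\sigma}(G)$. Indeed, $\sigma\bar\sigma=1$, and $\lambda_{\bar\sigma}(g)$ commutes with the first leg.

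The next step is to show that the generated von Neumann algebra is a faithful normal copy of $\R_\sigma$. We do this by a Fell absorption argument in the spirit of \cite[Proposition~2.6]{xxz26}. Let $\ell_2(G,H)\otimes\ell_2(G)$ have basis variables $(t,s)$. Define the unitary
\[
(W\xi)(t,s)=c(t,s)\,\xi(t,t^{-1}s),
\]
where $c(t,s)$ is a suitable scalar built from $\sigma$. The aim is to conjugate $\pi_\alpha(x)\otimes1$ to $\pi_\alpha(x)\otimes1$ and $u_g$ to the regular twisted implementer $\lambda_\sigma(g)\otimes1$, which acts on $\ell_2(G,H)$ tensored with a multiplicity space. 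Since the regular representation of $\R_\sigma$ amplified by $1_{\ell_2(G)}$ is faithful and normal, the map $x\lambda_\sigma(g)\mapsto \widetilde x u_g$ extends to a normal $*$-isomorphism onto its image.

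\emph{Main obstacle.} Finding the correct phase $c(t,s)$ is where we expect the difficulty. The cocycle identity has to be used with the paper's convention $\sigma(t^{-1},s)$. We will first try $c(t,s)=\overline{\sigma(t^{-1},s)}$, or its variant with the arguments rearranged, and verify the intertwining on generators.

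For the conditional expectation, the idea is to apply $\id\otimes\tau_\sigma$ to the coefficient side, after passing to Fourier coefficients. Define
\[
E\Bigl(\sum_{g,h}y_{g,h}\lambda(g)\otimes\lambda_\sigma(h)\Bigr)=\sum_g y_{g,g}\lambda(g)\otimes\lambda_\sigma(g).
\]
This is the compression that keeps only the diagonal $g=h$. Concretely, $E$ is given by
\[
E(X)=(\id\otimes\id\otimes\tau)\bigl(V X V^*\bigr)
\]
for the coaction-type unitary implementing the dual action. Equivalently, when a trace is present, $E$ is the trace-preserving expectation; in the general (not necessarily finite) case it is the composition of $\id\otimes\tau_{\L(G)}$ with the twisted coaction $\delta:\R_1\barot\L_\sigma(G)\to \R_1\barot\L_\sigma(G)\barot\L(G)$. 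That $E$ is a normal conditional expectation follows because it is the slice of a normal injective $*$-homomorphism by a normal state, and it is idempotent onto the diagonal copy.

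Finally, assume $\L_\sigma(G)$ is QWEP. Then $\L_{\bar\sigma}(G)$ is QWEP as well, being the complex conjugate algebra. If $\R_1$ is QWEP, the tensor product $\R_1\barot\L_\sigma(G)$ is QWEP, and so is its expected subalgebra $\R_\sigma$, by \cite[Proposition~4.1]{Ozawa04}. Conversely, if $\R_\sigma$ is QWEP, the same argument applied to the second embedding shows that $\R_1$ is QWEP.
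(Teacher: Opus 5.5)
Your outline follows the paper's route: the diagonal maps of \cite[Proposition~2.6]{xxz26}, the identification of their ranges with the crossed products over $\Delta G\subset G\times G$, the expectation that kills off-diagonal coefficients, and Ozawa's permanence together with QWEP of $\L_{\bar\sigma}(G)$.

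The absorption step works once you fix the phase and the direction. In your parametrization take
\[
(W\xi)(t,s)=\overline{\sigma(s^{-1}t,t^{-1})}\,\xi(t,t^{-1}s).
\]
The cocycle identity $\sigma(a,b)\sigma(ab,c)=\sigma(b,c)\sigma(a,bc)$ at $(a,b,c)=(s^{-1}t,t^{-1},g)$ gives $u_gW=W(\lambda_\sigma(g)\otimes1)$. So it is $W^*u_gW$, not $Wu_gW^*$, that equals $\lambda_\sigma(g)\otimes1$. Also $W$ commutes with $\pi_\alpha(x)\otimes1$, because it acts by scalars and leaves $t$ fixed. The second embedding works the same way with the conjugate phase.

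The genuine gap is the normal conditional expectation when $\M$ has no trace. The statement covers this case, and the remark after Corollary~\ref{cor:CHactions} uses it. Your mechanism cannot produce this expectation:
\begin{itemize}
\item A coaction $\delta$ into $\R_1\barot\L_\sigma(G)\barot\L(G)$ tags $x\lambda(g)\otimes\lambda_\sigma(h)$ by $\lambda(\phi(g,h))$ for some homomorphism $\phi:G\times G\to G$. Then $(\id\otimes\tau)\circ\delta$ keeps exactly the coefficients over $\ker\phi$, which is a normal subgroup. But $\Delta G$ is normal in $G\times G$ only when $G$ is abelian.
\item Slicing a $G\times G$-coaction by a normal state fails too. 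For infinite $G$, $1_{\Delta G}$ is not in the Fourier algebra of $G\times G$, so no normal state on $\L(G\times G)$ has it as its coefficient function.
\end{itemize}

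The repair is to leave $\L(G)$. Let $P_c$ be the projection onto functions supported on $\{(t,s):t^{-1}s=c\}$; these sets are the right cosets of $\Delta G$, and in fact $P_c=W(1\otimes e_{c,c})W^*$, with $e_{c,c}$ the rank-one projection onto $\delta_c$.
\begin{itemize}
\item The diagonal generators commute with every $P_c$.
\item For $g\ne h$ one has $P_c(x\lambda(g)\otimes\lambda_\sigma(h))P_c=0$.
\end{itemize}
Hence $E(X)=\sum_cP_cXP_c$ is normal and unital completely positive, and it agrees with your diagonal formula on monomials. It fixes the diagonal algebra, and by normality and weak$^*$-density of monomials it takes values in that algebra. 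So it is the required normal conditional expectation.

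Equivalently, take the normal Fell-absorption map
\[
x\lambda(g)\otimes\lambda_\sigma(h)\longmapsto x\lambda(g)\otimes\lambda_\sigma(h)\otimes\lambda(g)\rho(h)
\]
into $\R_1\barot\L_\sigma(G)\barot B(\ell_2(G))$, and slice it by the vector state at $\delta_e$. Here $\rho$ is the right regular representation, $(g,h)\mapsto\lambda(g)\rho(h)$ is a genuine representation of $G\times G$, and $\langle\lambda(g)\rho(h)\delta_e,\delta_e\rangle=\delta_{g,h}$.
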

\begin{proof}
Write $\lambda(g)$ and $\lambda_\sigma(g)$ for the canonical unitaries. By \cite[Proposition~2.6]{xxz26}, the maps
\[
x\lambda_\sigma(g)\longmapsto x\lambda(g)\otimes\lambda_\sigma(g),\qquad
x\lambda(g)\longmapsto x\lambda_\sigma(g)\otimes\lambda_{\bar\sigma}(g)
\]
are $*$-homormophisms between von Neumann algebras. In the regular representations,
their ranges are the crossed products associated with the diagonal subgroup
$\Delta G\subset G\times G$. There exists a conditional expectation from $\R_1 \overline{\otimes} \L_\sigma(G)$ to the diagonal subalgebra. The conditional expectations satisfy
\[
\E_{\Delta G}(x\lambda(g)\otimes\lambda_\sigma(h))
=\delta_{g,h}\,x\lambda(g)\otimes\lambda_\sigma(h),
\]
and similarly for the second map. Since
$\L_{\bar\sigma}(G)\cong\L_\sigma(G)^{\rm op}$, tensor-product and
expectation permanence give the conclusion.
\end{proof}

\begin{theorem}[{Theorem \ref{thm:positiveoverview}}]\label{thm:scalarfactorization}
Let $\alpha:G\curvearrowright \M$ admit an invariant normal state $\varphi$, and let $\sigma\in Z^2(G,\T)$. Then
\begin{equation}\label{eq:scalarfactorization}
\M\rtimes_{\alpha,\sigma}G\text{ is QWEP}
\quad\Longleftrightarrow\quad
\M\rtimes_\alpha G\text{ and }\L_\sigma(G)\text{ are QWEP}.
\end{equation}
In particular, this holds for every finite trace-preserving action.
\end{theorem}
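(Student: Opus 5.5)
The plan combines Proposition~\ref{prop:diagonal} with the observation that the invariant state yields a normal conditional expectation of $\R_\sigma=\M\rtimes_{\alpha,\sigma}G$ onto a copy of $\L_\sigma(G)$.

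\emph{Reverse implication.} Suppose $\R_1=\M\rtimes_\alpha G$ and $\L_\sigma(G)$ are QWEP. By Proposition~\ref{prop:diagonal}, $\R_\sigma$ sits in $\R_1\barot\L_\sigma(G)$ with a normal conditional expectation. The tensor product is QWEP by spatial tensor permanence \cite[Proposition~4.1]{Ozawa04}, and expectation permanence then gives QWEP for $\R_\sigma$. No invariant state is needed for this direction.

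\emph{Forward implication.} Suppose $\R_\sigma$ is QWEP. If we also know that $\L_\sigma(G)$ is QWEP, then the last clause of Proposition~\ref{prop:diagonal} gives that $\R_1$ is QWEP. Equivalently, we use $\R_1\hookrightarrow\R_\sigma\barot\L_{\bar\sigma}(G)$ together with $\L_{\bar\sigma}(G)\cong\L_\sigma(G)^{\rm op}$, which is QWEP whenever $\L_\sigma(G)$ is. So everything reduces to showing that $\L_\sigma(G)$ is an expected subalgebra of $\R_\sigma$.

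\emph{Main step: the expectation onto $\L_\sigma(G)$.} Since $\sigma$ is scalar, the unitaries $\lambda_\sigma(g)\in\R_\sigma$ satisfy $\lambda_\sigma(g)\lambda_\sigma(h)=\sigma(g,h)\lambda_\sigma(gh)$. They generate a von Neumann subalgebra $\mathcal L\subset\R_\sigma$. The candidate expectation is
\[
E\Bigl(\sum_g x_g\lambda_\sigma(g)\Bigr)=\sum_g\varphi(x_g)\lambda_\sigma(g).
\]
To define it rigorously, I would use the GNS representation of $\M$ with respect to $\varphi$.

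\begin{itemize}
\item[(a)] Invariance $\varphi\circ\alpha_g=\varphi$ gives a unitary implementation $u_g$ of $\alpha$ on $L_2(\M,\varphi)$ fixing the cyclic vector $\xi_\varphi$. Passing to the support of $\varphi$ (which is $\alpha$-invariant) if $\varphi$ is not faithful, we may realize $\R_\sigma$ faithfully on $\ell_2(G,L_2(\M,\varphi))$.
\item[(b)] Let $V:\ell_2(G)\to\ell_2(G,L_2(\M,\varphi))$ be the isometry $\delta_s\mapsto\delta_s\otimes\xi_\varphi$. Set $\Phi(y)=V^*yV$.
\item[(c)] Check on generators: $\Phi(\pi_\alpha(a))=\varphi(a)1$, using invariance to get $\langle\alpha_{t^{-1}}(a)\xi_\varphi,\xi_\varphi\rangle=\varphi(a)$. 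Also $\Phi(\lambda_\sigma(g))=\lambda_\sigma(g)$, because $V$ intertwines the $\lambda_\sigma$'s and the scalar cocycle passes through $V$. Hence $\Phi$ is a normal ucp map $\R_\sigma\to\L_\sigma(G)\subset B(\ell_2(G))$.
\item[(d)] The map $\lambda_\sigma(g)\mapsto\lambda_\sigma(g)$ from $\L_\sigma(G)$ to $\mathcal L$ extends to a normal isomorphism $\rho$. Indeed, $\mathcal L$ is unitarily equivalent to $\L_\sigma(G)\otimes 1$ via the Fell-absorption-type unitary $W\xi(t)=(1\otimes u_t)\xi(t)$, as in \cite[Proposition~2.6]{xxz26}.
\item[(e)] Then $\rho\circ\Phi$ is a normal ucp idempotent onto $\mathcal L$, hence a conditional expectation by Tomiyama. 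Note that $\Phi\circ\rho=\id$ on generators, hence everywhere by normality.
\end{itemize}

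Expectation permanence then makes $\L_\sigma(G)$ QWEP.

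The main obstacle I expect is step (d), identifying the abstract $\L_\sigma(G)$ with the subalgebra generated inside the crossed product for a general (non-tracial) $\M$, together with the support reduction in step (a). In the finite trace-preserving case this is immediate: the canonical trace restricts to $\tau_\sigma$ on $\mathcal L$, and $E(x\lambda_\sigma(g))=\tau(x)\lambda_\sigma(g)$ is the trace-preserving expectation. This is the "in particular" clause of the theorem.
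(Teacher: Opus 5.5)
Your argument is correct and is essentially the paper's proof: the invariant state gives the normal expectation $x\lambda_\sigma(g)\mapsto\varphi(x)\lambda_\sigma(g)$ onto $\L_\sigma(G)$, and Proposition~\ref{prop:diagonal} then gives both implications; your compression $V^*(\cdot)V$ is exactly the paper's ``apply $\varphi$ to the coefficient entries''. The only point to tidy is steps (a) and (d): for a non-faithful $\varphi$ the GNS space need not carry $\R_\sigma$ faithfully, but no support reduction and no unitary $W$ are needed, because in the regular representation $\R_\sigma\subset\M\barot B(\ell_2(G))$ with $\lambda_\sigma(g)=1\otimes\lambda_\sigma(g)$ already, so the slice map $\varphi\otimes\id$, composed with $x\mapsto 1\otimes x$, is the required expectation.
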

\begin{proof}
Invariance of $\varphi$ gives the normal conditional expectation
\[
\E_\varphi:\M\rtimes_{\alpha,\sigma}G\longrightarrow\L_\sigma(G),
\qquad \E_\varphi(x\lambda_\sigma(g))=\varphi(x)\lambda_\sigma(g).
\]
Indeed, apply $\varphi$ to the coefficient entries in the regular
representation. Thus QWEP of the crossed product implies QWEP of
$\L_\sigma(G)$. Both directions now follow from
Proposition~\ref{prop:diagonal}.
\end{proof}

\begin{corollary}\label{cor:allscalartwists}
If a finite trace-preserving action has a non-QWEP ordinary crossed product, all its scalar-twisted crossed products are non-QWEP. In particular, this holds for each action in Theorem~\ref{thm:overview}\textup{(i)}.
\end{corollary}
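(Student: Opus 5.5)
The corollary follows directly from Theorem~\ref{thm:scalarfactorization}, read in its contrapositive form. Let $\alpha:G\curvearrowright(\M,\tau)$ be a finite trace-preserving action, and suppose $\M\rtimes_\alpha G$ is not QWEP. Since $\tau$ is an invariant normal state, Theorem~\ref{thm:scalarfactorization} applies to every $\sigma\in Z^2(G,\T)$. The forward implication in \eqref{eq:scalarfactorization} says that if $\M\rtimes_{\alpha,\sigma}G$ were QWEP, then $\M\rtimes_\alpha G$ would be QWEP. This is a contradiction, so every scalar-twisted crossed product is non-QWEP.

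If we want to avoid citing the full equivalence, only the second diagonal embedding of Proposition~\ref{prop:diagonal} is needed. This is the embedding $\R_1\hookrightarrow\R_\sigma\barot\L_{\bar\sigma}(G)$, which comes with a normal conditional expectation. If $\R_\sigma$ were QWEP, then $\L_\sigma(G)$ would be QWEP through the expectation $\E_\tau$. Its conjugate $\L_{\bar\sigma}(G)\cong\L_\sigma(G)^{\rm op}$ is then QWEP as well. The tensor product $\R_\sigma\barot\L_{\bar\sigma}(G)$ would therefore be QWEP, and hence so would its expected subalgebra $\R_1$.

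It remains to check that each action in Theorem~\ref{thm:overview}\textup{(i)} meets the hypotheses.
\begin{itemize}
\item All three actions are trace-preserving on finite algebras.
\item For the action $\beta$ on $\M=\L(A)$, the crossed product is $\L(E)$ by \eqref{eq:semidirect}, which is non-QWEP by Corollary~\ref{cor:nonqwep}.
\item For the action on $\L(\FF_\infty)$, the crossed product is $\L(\Delta)$, again non-QWEP by Corollary~\ref{cor:nonqwep}.
\item For the permutation action on $\R_I\cong\R_0$, the crossed product is non-QWEP by Theorem~\ref{thm:hyperfinite}.
\end{itemize}

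There is no real obstacle. The only point needing care is that the expectation onto $\L_\sigma(G)$ and the identification $\L_{\bar\sigma}(G)\cong\L_\sigma(G)^{\rm op}$ are valid for an arbitrary scalar cocycle $\sigma$. This is already contained in the proofs of Theorem~\ref{thm:scalarfactorization} and Proposition~\ref{prop:diagonal}.
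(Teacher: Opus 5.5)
Your proposal is correct and follows the paper's proof, which is simply the contrapositive of Theorem~\ref{thm:scalarfactorization} with the invariant normal state $\tau$. Your second route through the embedding $\R_1\hookrightarrow\R_\sigma\barot\L_{\bar\sigma}(G)$ of Proposition~\ref{prop:diagonal} is just that theorem's proof unpacked. Your checks that the three actions in Theorem~\ref{thm:overview}\textup{(i)} have non-QWEP ordinary crossed products ($\L(E)$, $\L(\Delta)$, and Theorem~\ref{thm:hyperfinite}) are accurate.
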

\begin{proof}
Apply Theorem~\ref{thm:scalarfactorization}.
\end{proof}

\begin{lemma}\label{lem:scalar}
If $\L_\sigma(G)$ is QWEP, then $\L(G)$ is QWEP. Consequently, QWEP of a finite trace-preserving scalar-twisted crossed product implies QWEP of both its coefficient algebra and $\L(G)$.
\end{lemma}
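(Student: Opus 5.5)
The approach is to realize $\L(G)$ as an expected subalgebra of $\L_\sigma(G)\barot\L_{\bar\sigma}(G)$ through a diagonal embedding. This is the special case $\M=\com$ of Proposition~\ref{prop:diagonal}. With $\M=\com$ and the trivial action, $\R_1=\L(G)$ and $\R_\sigma=\L_\sigma(G)$, and the second embedding there reads
\[
\L(G)\hookrightarrow \L_\sigma(G)\barot\L_{\bar\sigma}(G),\qquad \lambda(g)\longmapsto\lambda_\sigma(g)\otimes\lambda_{\bar\sigma}(g).
\]
This map is a homomorphism because $\sigma(g,h)\overline{\sigma(g,h)}=1$. It is trace-preserving, since the canonical traces vanish off $e$. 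It is therefore a normal faithful embedding, and the trace-preserving conditional expectation onto its range exists automatically in the finite setting.

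Assume $\L_\sigma(G)$ is QWEP. Then $\L_{\bar\sigma}(G)\cong\L_\sigma(G)^{\rm op}$ is QWEP, because QWEP passes to opposite (equivalently, complex conjugate) algebras, as recorded in the preliminaries. The spatial tensor product $\L_\sigma(G)\barot\L_{\bar\sigma}(G)$ is then QWEP by \cite[Proposition~4.1]{Ozawa04}. Passing to the expected subalgebra, the image of $\L(G)$ is QWEP, and so $\L(G)$ is QWEP.

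For the consequence, let $\R_\sigma=\M\rtimes_{\alpha,\sigma}G$ be finite, trace-preserving, and QWEP. The map $x\lambda_\sigma(g)\mapsto\delta_{g,e}x$ is a trace-preserving conditional expectation onto $\M$, so $\M$ is QWEP. The map $\E_\tau(x\lambda_\sigma(g))=\tau(x)\lambda_\sigma(g)$ from the proof of Theorem~\ref{thm:scalarfactorization} is a conditional expectation onto $\L_\sigma(G)$, so $\L_\sigma(G)$ is QWEP. The first part then gives QWEP of $\L(G)$.

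The only point needing care is the identification $\L_{\bar\sigma}(G)\cong\L_\sigma(G)^{\rm op}$, or alternatively with the complex conjugate algebra $\overline{\L_\sigma(G)}$, together with its QWEP permanence. This is where I expect the main obstacle to lie. The quickest route is to use the complex conjugate algebra: the map $\overline{\lambda_\sigma(g)}\mapsto\lambda_{\bar\sigma}(g)$ respects the relations and the traces, hence extends to an isomorphism, and the paper already records that QWEP passes to complex conjugates. Every other step is routine.
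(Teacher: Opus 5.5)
Your proposal is correct and follows the paper's proof: the diagonal embedding $\lambda(g)\mapsto\lambda_\sigma(g)\otimes\lambda_{\bar\sigma}(g)$ into $\L_\sigma(G)\barot\L_{\bar\sigma}(G)$, QWEP of both tensor factors via $\overline{\L_\sigma(G)}\cong\L_{\bar\sigma}(G)$, and tensor/expectation permanence, followed by the canonical expectations from the crossed product onto $\M$ and onto $\L_\sigma(G)$. The paper leaves implicit the isomorphism $\overline{\L_\sigma(G)}\cong\L_{\bar\sigma}(G)$ and the expectation onto the coefficient algebra, and you supply both.
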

\begin{proof}
The diagonal map
\[
\lambda(g)\longmapsto\lambda_\sigma(g)\otimes\lambda_{\bar\sigma}(g)
\]
embeds $\L(G)$ trace-preservingly into $\L_\sigma(G)\barot\L_{\bar\sigma}(G)$. The two factors are QWEP simultaneously, and the image is the image of a conditional expectation. For the crossed-product assertion, use the conditional expectation from $\R$ to $\L_{\sigma}(G)$.
\end{proof}

\begin{corollary}\label{cor:CHactions}
For a hyperlinear group $G$, the following are equivalent:
\begin{enumerate}[label=\textup{(\roman*)},leftmargin=2em]
\item $G$ is cocycle-hyperlinear;
\item every finite trace-preserving action of $G$ with QWEP ordinary crossed product has QWEP scalar-twisted crossed products.
\end{enumerate}
\end{corollary}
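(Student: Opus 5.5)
The equivalence splits into two implications, and each reduces to Theorem~\ref{thm:scalarfactorization} together with the definition of cocycle-hyperlinearity. Throughout, QWEP and Connes embeddability of the finite tracial algebras involved are used interchangeably, as in the definition of cocycle-hyperlinearity.

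For (i)$\Rightarrow$(ii), let $\alpha:G\curvearrowright(\M,\tau)$ be a finite trace-preserving action with $\M\rtimes_\alpha G$ QWEP, and let $\sigma\in Z^2(G,\T)$. We first normalize $\sigma$: every scalar cocycle is cohomologous to a normalized one, and cohomologous scalar cocycles give isomorphic twisted crossed products by rescaling the implementing unitaries. Cocycle-hyperlinearity then says that $\L_\sigma(G)$ is Connes embeddable, hence QWEP. Since $\M\rtimes_\alpha G$ is QWEP by assumption, the implication ``$\Leftarrow$'' of \eqref{eq:scalarfactorization} in Theorem~\ref{thm:scalarfactorization} shows that $\M\rtimes_{\alpha,\sigma}G$ is QWEP.

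For (ii)$\Rightarrow$(i), I would apply (ii) to the trivial action of $G$ on $\M=\com$. This action is finite and trace-preserving, and its ordinary crossed product is $\L(G)$, which is QWEP by \eqref{eq:bridge} because $G$ is hyperlinear. Hypothesis (ii) therefore gives that $\com\rtimes_{\sigma}G=\L_\sigma(G)$ is QWEP for every normalized $\sigma\in Z^2(G,\T)$. By the remark in the definition of cocycle-hyperlinearity, this is Connes embeddability, so $G$ is cocycle-hyperlinear.

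I expect no real obstacle here. The only points that need care are the normalization and cohomology-invariance of the scalar cocycle in the first direction, and checking that the trivial action on $\com$ is an admissible test action in (ii). If one prefers diffuse coefficients, the trivial action on $L_\infty[0,1]$ works as well. In that case one uses $L_\infty[0,1]\barot\L(G)$, which is QWEP as a tensor product of QWEP algebras, and then takes the expectation onto $\L_\sigma(G)$ provided by Theorem~\ref{thm:scalarfactorization}.
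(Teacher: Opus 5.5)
Your proof is correct and follows the paper's argument: (i)$\Rightarrow$(ii) uses the ``$\Leftarrow$'' direction of Theorem~\ref{thm:scalarfactorization}, with the trace as the invariant state. For (ii)$\Rightarrow$(i) you test (ii) on the trivial action on $\com$, whose ordinary crossed product $\L(G)$ is QWEP by hyperlinearity; your remarks on normalizing $\sigma$ and on diffuse coefficients are harmless extras.
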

\begin{proof}
The first implication follows from Theorem~\ref{thm:scalarfactorization}. For the converse, apply \textup{(ii)} to the trivial action on $\com$.
\end{proof}

\begin{remark}
For an arbitrary action, Proposition~\ref{prop:diagonal} still gives
\[
\M\rtimes_\alpha G\text{ QWEP},\quad
\M\rtimes_{\alpha,\sigma}G\text{ non-QWEP}
\quad\Longrightarrow\quad \L_\sigma(G)\text{ non-QWEP}.
\]
An invariant semifinite trace does not suffice for Theorem~\ref{thm:scalarfactorization}; see Proposition~\ref{prop:translation}.
\end{remark}

\section{Exterior equivalence and projective representations}\label{sec:typeI}
For a nonzero Hilbert space $K$, an action $\beta:G\curvearrowright B(K)$ need not lift to a unitary representation. Its scalar multiplier determines the crossed product.

\begin{proposition}%[Lifting and the crossed-product decomposition]
\label{prop:typeIdecomp}
Choose \(u(e)=1\) and \(u(g)\in\cU(K)\) with \(\beta_g=\Ad_{u(g)}\). There is a normalized scalar \(2\)-cocycle \(\sigma\) such that
\[
u(g)u(h)=\sigma(g,h)u(gh).
\]
Its cohomology class in \(H^2(G,\T)\) is independent of these choices. The action is implemented by a genuine unitary representation on \(K\) if and only if \([\sigma]=0\). Moreover,
\begin{equation}\label{eq:typeIdecomp}
B(K)\rtimes_\beta G\ \cong\ B(K)\barot\L_{\bar\sigma}(G).
\end{equation}
Consequently this crossed product is QWEP if and only if \(\L_{\bar\sigma}(G)\) is QWEP.
\end{proposition}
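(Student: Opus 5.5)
The plan has four steps: produce the cocycle, identify the obstruction to lifting, build an explicit isomorphism of crossed products, and read off QWEP.

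\textbf{Cocycle and cohomology class.} Every automorphism of $B(K)$ is inner (Wigner/Kadison), so the implementing unitaries $u(g)$ exist, and each is unique up to a scalar in $\T$. Since $\Ad_{u(g)u(h)}=\beta_g\beta_h=\beta_{gh}=\Ad_{u(gh)}$ and $B(K)'=\com1$, the unitary $u(g)u(h)u(gh)^*$ is a scalar $\sigma(g,h)\in\T$. Associativity of $u(g)u(h)u(k)$ gives the $2$-cocycle identity, and $u(e)=1$ gives normalization. Replacing $u(g)$ by $c(g)u(g)$ with $c(g)\in\T$ and $c(e)=1$ multiplies $\sigma$ by the coboundary $c(g)c(h)\overline{c(gh)}$, so $[\sigma]\in H^2(G,\T)$ is well defined. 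If $[\sigma]=0$, say $\sigma=\partial c$, then $\overline{c(g)}u(g)$ is a genuine representation implementing $\beta$. Conversely, a genuine implementing representation is one admissible choice of $u$, and for that choice $\sigma\equiv1$.

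\textbf{The isomorphism \eqref{eq:typeIdecomp}.} Let $\lambda(g)$ denote the implementing unitaries in $B(K)\rtimes_\beta G$, and put $w(g)=u(g)^*\lambda(g)$. The relation $\lambda(g)x\lambda(g)^*=\beta_g(x)=u(g)xu(g)^*$ shows that $w(g)$ commutes with $B(K)$. Using that $\lambda(h)$ commutes past $u(g)^*$ up to conjugation by $u(h)$, we get
\[
w(g)w(h)=u(g)^*\lambda(g)u(h)^*\lambda(h)=u(g)^*u(g)u(h)^*u(g)^*\lambda(gh)=u(g)^*\lambda(g)\cdots,
\]
and a direct computation should yield $w(g)w(h)=\overline{\sigma(g,h)}\,w(gh)$. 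The factor is $\overline\sigma$ because $u(h)^*u(g)^*=\overline{\sigma(g,h)}\,u(gh)^*$. Thus $B(K)$ and the $w(g)$ generate commuting copies of $B(K)$ and of a $\bar\sigma$-projective group algebra.

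To identify the generated algebra with $B(K)\barot\L_{\bar\sigma}(G)$, I would argue spatially. In the regular representation on $\ell_2(G,K)=K\otimes\ell_2(G)$, the unitary $W\xi(t)=u(t)^*\xi(t)$, or its inverse, should conjugate $\pi_\beta(x)$ to $x\otimes1$ and $\lambda(g)$ to $u(g)\otimes\lambda_{\bar\sigma}(g)$, up to the paper's convention $\sigma(t^{-1},s)$. I expect this bookkeeping with conventions (whether one gets $\sigma$ or $\bar\sigma$, and the inverses) to be the main obstacle, and I will fix it by checking the formula on a single basis vector. Once the conjugation is verified, the generated von Neumann algebra is $B(K)\barot\L_{\bar\sigma}(G)$.

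\textbf{QWEP.} $B(K)$ is injective, hence QWEP, and QWEP passes to spatial tensor products, so $\L_{\bar\sigma}(G)$ QWEP implies the crossed product is QWEP. Conversely, for a unit vector $\xi\in K$ the slice map $\omega_\xi\barot\id$ is a normal conditional expectation onto $1\otimes\L_{\bar\sigma}(G)$, and QWEP passes to expected subalgebras. This proves the final equivalence.
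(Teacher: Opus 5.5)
Your proposal is correct and follows the paper's route: the scalar cocycle from $B(K)'=\com 1$, the commuting unitaries $w(g)=u(g)^*\lambda(g)$ with $w(g)w(h)=\overline{\sigma(g,h)}\,w(gh)$, a diagonal unitary on $\ell_2(G,K)$ realizing the crossed product spatially as $B(K)\barot\L_{\bar\sigma}(G)$, and then injectivity of $B(K)$ together with a normal slice expectation for the QWEP equivalence. On the bookkeeping you deferred, the paper uses $(D\xi)(t)=u(t^{-1})^*\xi(t)$, giving $D\pi_\beta(x)D^*=1\otimes x$ and $D\lambda(g)D^*=\lambda_{\bar\sigma}(g)\otimes u(g)$ exactly in the convention $\overline{\sigma(t^{-1},g)}$; your $W$ itself does not work, since $u(t)^*u(t^{-1})$ is not scalar in general, but $W^*$, namely $\xi(t)\mapsto u(t)\xi(t)$, does, because it differs from $D$ only by the diagonal scalar phase $\overline{\sigma(t,t^{-1})}$.
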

\begin{proof}
Since $\Ad_{u(g)u(h)u(gh)^*}=\id$, there is $\sigma(g,h)\in\T$ with $u(g)u(h)=\sigma(g,h)u(gh)$. 

Associativity gives the cocycle identity, Any other normalized choice of implementing unitaries has the form \(u'(g)=b_gu(g)\), where \(b_g\in\mathbb T\) and \(b_e=1\). The corresponding cocycle is
\[
\sigma'(g,h)=b_gb_h\overline{b_{gh}}\,\sigma(g,h),
\]
so \(\sigma'\) and \(\sigma\) differ by a scalar coboundary and define the same class in \(H^2(G,\mathbb T)\).
This proves the lifting criterion.

For the canonical crossed-product unitaries $\lambda(g)$, put $w(g)=u(g)^*\lambda(g)$.
Then
\[
[w(g),x]=0\quad(x\in B(K)),\qquad
w(g)w(h)=\overline{\sigma(g,h)}w(gh).
\]
In the regular representation on $\ell_2(G,K)$, let
$(D\xi)(t)=u(t^{-1})^*\xi(t)$. Then
\[
D\pi_\beta(x)D^*=1\otimes x,\qquad
D\lambda(g)D^*=\lambda_{\bar\sigma}(g)\otimes u(g).
\]
This proves \eqref{eq:typeIdecomp}. Since $B(K)$ is injective,
tensor permanence and a normal-state slice give the QWEP equivalence.
\end{proof}

\begin{remark}%[A finite action which does not lift]
\label{rem:pauli}

In this remark we construct a finite action which can not be lifted.

Let \(G=(\ent/2\ent)^2\), \(K=\com^2\), and
\[
U=\begin{pmatrix}1&0\\0&-1\end{pmatrix},\qquad
V=\begin{pmatrix}0&1\\1&0\end{pmatrix},\qquad UV=-VU.
\]
The action $\beta_{(a,b)}=\Ad_{U^aV^b}$ has multiplier
\[
\sigma((a,b),(c,d))=(-1)^{bc}.
\]
Every implementing pair is a scalar multiple of $U,V$, so $\beta$ does not lift. The four monomials $U^aV^b$ form a basis of $\bM_2$; hence
\[
\bM_2\rtimes_\beta G\cong \bM_4,
\qquad
\bM_2\barot\L(G)\cong\bigoplus_{j=1}^{4}\bM_2.
\]
\end{remark}

\subsection{Projective matrix approximations}

\begin{proposition}%[Projective approximation criterion]
\label{prop:projectivecriterion}
Assume \(G\) is hyperlinear, and retain the multiplier \(\sigma\) from Proposition~\ref{prop:typeIdecomp}. The following are equivalent:
\begin{enumerate}[label=\textup{(\roman*)},leftmargin=2em]
\item \(B(K)\rtimes_\beta G\) is QWEP.
\item There are dimensions \(d_n\) and maps \(r_n:G\to\cU(\bM_{d_n})\), with \(r_n(e)=1\), such that for every \(g,h\in G\),
\begin{equation}\label{eq:projectiveapprox}
\lim_{n\to\infty}
\bigl\|r_n(g)r_n(h)-\sigma(g,h)r_n(gh)\bigr\|_{2,d_n}=0.
\end{equation}
\end{enumerate}
No trace-separation condition is imposed on the maps \(r_n\).
\end{proposition}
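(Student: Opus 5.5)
By Proposition~\ref{prop:typeIdecomp}, condition (i) is equivalent to QWEP of $\L_{\bar\sigma}(G)$. Since complex conjugation preserves QWEP and $\L_{\bar\sigma}(G)\cong\overline{\L_\sigma(G)}$, this is in turn equivalent to QWEP of $\L_\sigma(G)$. This algebra is finite, with its canonical trace $\tau_\sigma(\lambda_\sigma(g))=\delta_{g,e}$. By the equivalence recalled after the definition of cocycle-hyperlinearity, its QWEP property is the same as Connes embeddability of $(\L_\sigma(G),\tau_\sigma)$. The plan is therefore to prove that Connes embeddability of $\L_\sigma(G)$ is equivalent to (ii). Crucially, (ii) carries no condition on traces; the hyperlinearity of $G$ is what lets us drop that condition.

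For (i)$\Rightarrow$(ii), take a trace-preserving embedding $\L_\sigma(G)\hookrightarrow\mathcal Q_\omega$. Lift each $\lambda_\sigma(g)$ to a sequence of unitaries $r_n(g)\in\cU(\bM_{d_n})$, with $r_n(e)=1$; unitaries in a tracial ultraproduct lift to unitary sequences. The relation $\lambda_\sigma(g)\lambda_\sigma(h)=\sigma(g,h)\lambda_\sigma(gh)$ then holds along $\omega$ in $\|\cdot\|_{2}$. Because $G$ is countable, a diagonal argument over an enumeration of $G\times G$ converts convergence along $\omega$ into an honest subsequence satisfying \eqref{eq:projectiveapprox}.

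For (ii)$\Rightarrow$(i), the maps $r_n$ define a map $\rho=[r_n]_\omega:G\to\cU(\mathcal Q_\omega)$ with $\rho(g)\rho(h)=\sigma(g,h)\rho(gh)$. This is a $\sigma$-projective representation, but the induced trace need not equal $\tau_\sigma$. Hyperlinearity of $G$ repairs this. Take a trace-preserving embedding $\pi:\L(G)\to\mathcal Q'_\omega$ realized by matrix unitaries $s_n(g)\in\cU(\bM_{k_n})$, and form $r_n(g)\otimes s_n(g)\in\cU(\bM_{d_nk_n})$. These still satisfy \eqref{eq:projectiveapprox}, because $s_n$ is asymptotically multiplicative with scalar defect $1$. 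The trace factors as
\[
\tr(r_n(g)\otimes s_n(g))=\tr(r_n(g))\,\tr(s_n(g)).
\]
For $g\ne e$ the second factor tends to $0$ while the first is bounded by $1$, so the product tends to $0$. Hence $g\mapsto[r_n(g)\otimes s_n(g)]_\omega$ is a $\sigma$-representation $\tilde\rho$ with $\tau(\tilde\rho(g))=\delta_{g,e}$.

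It remains to extend $\tilde\rho$ to a normal trace-preserving embedding of $\L_\sigma(G)$. On the twisted group algebra $\com_\sigma G$, the linear extension of $\tilde\rho$ is a $*$-homomorphism. It preserves the trace, so it is isometric for $\|\cdot\|_2$ and extends to a unitary $\ell_2(G)\to L_2(W^*(\tilde\rho(G)))$. This unitary intertwines the left regular $\sigma$-representation with the GNS representation of $W^*(\tilde\rho(G))$. Consequently $W^*(\tilde\rho(G))\cong\L_\sigma(G)$ trace-preservingly, and $\L_\sigma(G)$ embeds in the ultraproduct.

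The main obstacle is this trace-correction step, and the tensor trick with a hyperlinear approximation is the tool I would use. The only care needed is to reindex the two ultraproducts so that both sequences live on a common index set. Taking a common sequence index, or choosing $\omega$ on $\mathbb N$ and using sequences for both, suffices.
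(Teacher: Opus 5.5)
Your proof is correct and uses the same idea as the paper: the trace defect of the approximate $\sigma$-representation is removed by tensoring with a trace-separating representation of $G$ supplied by hyperlinearity, and the resulting $\sigma$-representation with trace $\delta_{g,e}$ generates a trace-preserving copy of the twisted group algebra. The only difference is the level at which you tensor. The paper forms $W_g=\lambda(g)\otimes\overline{r(g)}\in\L(G)\barot\overline{\Pcal}$ with $r(g)=[r_n(g)]_\omega$, and obtains $\L_{\bar\sigma}(G)$ directly via QWEP permanence under tensor products and expectations. You instead tensor the microstates $r_n(g)\otimes s_n(g)$, embed $\L_\sigma(G)$ into a matrix ultraproduct, and reach $\L_{\bar\sigma}(G)$ by complex conjugation.
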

\begin{proof}
If \textup{(i)} holds, $\L_\sigma(G)$ is Connes embeddable.
Unitary microstates and a diagonal choice over $G$ give \textup{(ii)}.

Conversely, put
\[
r(g)=[r_n(g)]_\omega,\qquad \Pcal=\{r(g):g\in G\}'',\qquad
W_g=\lambda(g)\otimes\overline{r(g)}\in\L(G)\barot\overline \Pcal.
\]
Both tensor factors are QWEP, and
\[
W_gW_h=\bar\sigma(g,h)W_{gh},\qquad
(\tau_G\otimes\bar\tau_\Pcal)(W_g)=\delta_{g,e}.
\]
Hence $\{W_g:g\in G\}''\cong\L_{\bar\sigma}(G)$ trace-preservingly.
The trace-preserving expectation and Proposition~\ref{prop:typeIdecomp}
give \textup{(i)}.
\end{proof}

\begin{corollary}%[Sufficient conditions]
\label{cor:typeIpositive}
For hyperlinear \(G\), the crossed product \(B(K)\rtimes_\beta G\) is QWEP in each of the following cases:
\begin{enumerate}[label=\textup{(\roman*)},leftmargin=2em]
\item \([\sigma]=0\), equivalently \(\beta\) has a genuine implementing representation;
\item \(\sigma\) has a finite-dimensional projective unitary realization, possibly on a space different from \(K\);
\item \(\beta\) admits an invariant state on \(B(K)\), not necessarily a normal state.
\end{enumerate}
In particular, the answer is positive whenever \(K\) is finite-dimensional.
\end{corollary}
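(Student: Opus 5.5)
The plan is to reduce each case to condition \textup{(ii)} of Proposition~\ref{prop:projectivecriterion}, or directly to QWEP of $\L_{\bar\sigma}(G)$ via Proposition~\ref{prop:typeIdecomp}. Throughout, $G$ is hyperlinear, so $\L(G)$ is QWEP by \eqref{eq:bridge}. We may therefore fix maps $s_n:G\to\cU(\bM_{k_n})$ with $s_n(e)=1$ that are asymptotically multiplicative in normalized $2$-norm; the trace-separation property of the $s_n$ will not be needed.

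For \textup{(i)}: if $[\sigma]=0$, we may rechoose the implementing unitaries so that $\sigma\equiv1$, since the class is independent of choices by Proposition~\ref{prop:typeIdecomp}. Then $\L_{\bar\sigma}(G)=\L(G)$, which is QWEP, so the crossed product $B(K)\barot\L(G)$ is QWEP.

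For \textup{(ii)}: let $v:G\to\cU(\bM_m)$ satisfy $v(g)v(h)=\sigma(g,h)v(gh)$ and $v(e)=1$. By the coboundary remark we may assume this holds for the cocycle $\sigma$ itself, after adjusting by scalars. Put $r_n(g)=s_n(g)\otimes v(g)\in\cU(\bM_{k_nm})$. The defect $r_n(g)r_n(h)-\sigma(g,h)r_n(gh)$ equals $(s_n(g)s_n(h)-s_n(gh))\otimes v(g)v(h)$. Its normalized $2$-norm is at most $\|s_n(g)s_n(h)-s_n(gh)\|_{2}$, because the normalized trace is multiplicative on tensor products and $v(g)v(h)$ is unitary. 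This tends to $0$, so \eqref{eq:projectiveapprox} holds and Proposition~\ref{prop:projectivecriterion} applies. The final sentence of the corollary follows from \textup{(ii)}: when $K$ is finite-dimensional, the $u(g)$ themselves are a finite-dimensional projective realization of $\sigma$.

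Case \textup{(iii)} is the main obstacle, because the invariant state $\psi$ is not normal and so gives no normal expectation. My approach is to show that $\L_{\bar\sigma}(G)$ embeds with expectation into a QWEP algebra. A $G$-invariant state $\psi$ on $B(K)$ restricts, under $\Ad_{u(g)}$-invariance, to a state on the $C^*$-algebra generated by $B(K)$ and the $u(g)$, taken inside $B(K)$. Following the proof of Theorem~\ref{thm:scalarfactorization}, I would define a ucp map
\[
\E_\psi:B(K)\rtimes_{\beta}G\to\L_{\bar\sigma}(G)^{**}\quad\text{or into }\L_{\bar\sigma}(G),\qquad x\lambda(g)\mapsto\psi(x\,u(g))\,w(g),
\]
where $w(g)=u(g)^*\lambda(g)$ as in the proof of Proposition~\ref{prop:typeIdecomp}.

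A simpler alternative, which I would try first, uses $\psi$ to produce a finite-dimensional or tracial realization of $\sigma$. Let $\pi_\psi$ be the GNS representation of $B(K)$, restricted to the projective unitaries $u(g)$. Invariance of $\psi$ gives a unitary projective representation $\tilde u$ of $G$ with cocycle $\sigma$ on the GNS space of the $G$-system $(B(K),\psi)$. Combining this with the hyperlinear approximations $s_n$ by a tensor-product argument, and using the resulting approximately invariant vectors as in Følner-type arguments, should yield maps $r_n$ satisfying \eqref{eq:projectiveapprox}.

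If this becomes too delicate, the fallback is to argue at the $C^*$-level. An invariant state gives a $G$-equivariant ucp conditional expectation from $B(K)$ onto $\com$. This induces a ucp map from $B(K)\rtimes G$ onto the twisted group algebra that is the identity on the copy of $\L_{\bar\sigma}(G)$. QWEP passes to such weakly expected images by Ozawa's permanence results, and Proposition~\ref{prop:typeIdecomp} then concludes.
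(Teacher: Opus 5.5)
Your cases (i) and (ii), and the finite-dimensional remark, are correct. In (ii) the factor $s_n$ is unnecessary: Proposition~\ref{prop:projectivecriterion} imposes no trace separation, so the constant choice $r_n=v$ satisfies \eqref{eq:projectiveapprox} with zero defect. These are the paper's ``exact projective models'', and hyperlinearity enters only through QWEP of $\L(G)$ in the converse half of that proposition.

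Case (iii) has a genuine gap.

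\textbf{First and third approaches.} Both produce a ucp map, or a weak expectation, \emph{out of} $B(K)\rtimes_\beta G\cong B(K)\barot\L_{\bar\sigma}(G)$. Such a map can only transport QWEP from the crossed product to its image. That reproves the implication of Proposition~\ref{prop:typeIdecomp} you do not need. QWEP of the crossed product is what must be proved, and it is equivalent to QWEP of $\L_{\bar\sigma}(G)$, so the argument is circular. Moreover, the map induced by an equivariant ucp map $B(K)\to\com$ is $x\lambda(g)\mapsto\psi(x)\lambda(g)$. It lands in the \emph{untwisted} $\L(G)$ and sends $w(g)$ to $\psi(u(g)^*)\lambda(g)$, so it does not fix the copy $\{w(g)\}''$ of $\L_{\bar\sigma}(G)$.

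\textbf{Second approach.} This is too vague to check, and the GNS ingredient is not the relevant one. Every state gives the $\sigma$-projective representation $\pi_\psi\circ u$. Indeed, $\sigma$-projective representations inside QWEP algebras always exist, e.g.\ $\lambda_\sigma$ in $B(\ell_2(G))$. But $W_g=\lambda(g)\otimes\overline{v(g)}$ gives a trace-preserving expectation onto $\{W_g\}''\cong\L_{\bar\sigma}(G)$ only when $v$ takes values in a QWEP algebra with a faithful normal tracial state.

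\textbf{The missing idea.} Invariance makes $\psi$ central for the implementing unitaries:
\[
\psi(u(g)x)=\psi\bigl(u(g)(xu(g))u(g)^*\bigr)=\psi(xu(g))\qquad(x\in B(K)).
\]
Hence, with $\mathcal D=C^*(u(g):g\in G)$, the restriction $\psi|_{\mathcal D}$ is a trace. It has the $\mathcal D$-central extension $\psi$ to $B(K)$, so it is an \emph{amenable trace}. By \cite[Theorem~6.1]{Ozawa04} there is a unital $*$-homomorphism $\theta:\mathcal D\to\R_0^\omega$. Then $\theta\circ u$ is an exact $\sigma$-projective unitary representation in a tracial QWEP algebra, and the tensor argument of Proposition~\ref{prop:projectivecriterion} applies. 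This is the paper's proof.

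\textbf{A route closer to your F{\o}lner hint.} You could instead run Connes' argument directly:
\begin{enumerate}[label=\textup{(\alph*)},leftmargin=2em]
\item approximate $\psi$ by normal states $\Tr(h\,\cdot)$;
\item pass to convex combinations so that $\|u(g)hu(g)^*-h\|_1\to0$ on finite sets;
\item apply Powers--St{\o}rmer, then take spectral projections of $h^{1/2}$, to get finite-rank projections $e$ with $\|[u(g),e]\|_2\le\varepsilon\|e\|_2$ in Hilbert--Schmidt norm;
\item compress $u(g)$ to $eK$ and take polar parts; this gives the maps $r_n$ in \eqref{eq:projectiveapprox}.
\end{enumerate}
In that argument the approximately invariant objects are Hilbert--Schmidt operators on $K$, not vectors in the GNS space of $\psi$, and tensoring with the $s_n$ plays no role.
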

\begin{proof}
Cases \textup{(i)} and \textup{(ii)} follow from
Proposition~\ref{prop:projectivecriterion} with exact projective models.
For \textup{(iii)}, put $\mathcal D=C^*(u(g):g\in G)$. Invariance gives
\[
\varphi(u(g)x)=\varphi(xu(g))\qquad(x\in B(K),\ g\in G).
\]
Thus $\varphi|_\mathcal D$ is an amenable trace. By
\cite[Theorem~6.1, \textup{(i)}$\Rightarrow$\textup{(ii')}]{Ozawa04},
there is a unital $*$-homomorphism $\theta:\mathcal D\to \R_0^\omega$.
Since
\[
\theta(u(g))\theta(u(h))=\sigma(g,h)\theta(u(gh)),
\]
the tensor argument in Proposition~\ref{prop:projectivecriterion} applies.
\end{proof}

For the mixed cocycle of Section~\ref{sec:mixed}, the finite quotient is explicit:
\[
q:\Gamma_0\times\ent\longrightarrow\mu_3\times(\ent/3\ent),
\qquad q(g,m)=(\chi(g),m\bmod3),
\]
where \(\mu_3\) is the group of cube roots of unity. The cocycle is the pullback of
\(\omega((a,\bar m),(b,\bar n))=a^n\).
Consequently \(\L_\sigma(\Gamma_0\times\ent)\) is QWEP by the same finite-dimensional tensor argument. This proves the QWEP assertion for the twisted group algebra in Theorem~\ref{thm:overview}\textup{(ii)}.

\subsection{Relation to central extensions}
Every scalar cocycle is realized by \(\Ad_{\lambda_\sigma(\cdot)}\) on \(B(\ell_2(G))\). Therefore a positive answer for all actions on type-I factors is equivalent to the assertion that every scalar-twisted group algebra of a hyperlinear group is QWEP.

More precisely, let \(A_\sigma\leq\T\) be the countable subgroup generated by the values of \(\sigma\), and give \(E_\sigma=A_\sigma\times G\) the product
\[
(a,g)(b,h)=(ab\sigma(g,h),gh).
\]
Then
\begin{equation}\label{eq:centralextensioncriterion}
\L_\sigma(G)\text{ is QWEP}
\quad\Longleftrightarrow\quad
E_\sigma\text{ is hyperlinear}.
\end{equation}
For the forward direction, embed \(\L_\sigma(G)\) into \(\R_0^\omega\); the map \((a,g)\mapsto a\lambda_\sigma(g)\) is an injective group homomorphism. For the reverse direction, use the central-extension characterization in \cite[Lemma 4.8]{DV26}, with the character \(A_\sigma\hookrightarrow\T\). That characterization also shows that the assertion for all hyperlinear \(G\) and all scalar \(\sigma\) is equivalent to closure of countable hyperlinear groups under central extensions. 

\subsection{Positive classes from cocycle-hyperlinearity}\label{sec:CFpositive}
\begin{corollary}\label{cor:CFtypeI}
If \(G\) is cocycle-hyperlinear, then \(B(K)\rtimes_\beta G\) is QWEP for every action \(\beta\) and every nonzero Hilbert space \(K\). This applies to amenable-by-treeable groups, to
\[
\SL_n(\ent),\quad \operatorname{GL}_n(\ent),\quad
\operatorname{PSL}_n(\ent)\qquad(n\geq2),
\]
and to their countable central extensions.
\end{corollary}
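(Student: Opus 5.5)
The first assertion reduces at once to Proposition~\ref{prop:typeIdecomp}: the crossed product $B(K)\rtimes_\beta G$ is QWEP exactly when $\L_{\bar\sigma}(G)$ is. If $G$ is cocycle-hyperlinear, then $\L_{\bar\sigma}(G)$ is Connes embeddable, because $\bar\sigma$ is itself a normalized scalar $2$-cocycle. By the definition of cocycle-hyperlinearity, Connes embeddability is equivalent to QWEP for these finite tracial algebras, so the crossed product is QWEP. The first assertion therefore needs no new input.

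For the list of examples, I would quote Chakraborty--Flores \cite[Theorems~A and~B]{CF26}. As I recall, these give cocycle-hyperlinearity for amenable-by-treeable groups and for $\SL_n(\ent)$, $\operatorname{GL}_n(\ent)$ and $\operatorname{PSL}_n(\ent)$ with $n\geq2$. If the cited theorems cover only some of these groups, I would fill in the rest with permanence arguments. For example, $\operatorname{PSL}_2(\ent)$ is virtually free, hence amenable-by-treeable. I would also try to pass cocycle-hyperlinearity between finite-index subgroups and the quotient by the finite center, using the central-extension reformulation via \cite[Lemma~3.4]{Thom10}.

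For countable central extensions $1\to C\to\widetilde G\to G\to1$ of a cocycle-hyperlinear group $G$, I would again use \cite[Lemma~3.4]{Thom10}. Take a central extension $1\to D\to\widehat G\to\widetilde G\to1$ with $D$ countable abelian. The preimage $\widehat C$ of $C$ in $\widehat G$ is then an extension of $C$ by $D$, and $\widehat G/\widehat C\cong G$. The difficulty is that $\widehat C$ need not be central in $\widehat G$; it is only nilpotent of class at most $2$ and normal. So the statement does not follow formally from Thom's characterization by iterating.

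This is the step I expect to be the main obstacle. My first attempt would be to check whether \cite{CF26} already states closure under central extensions, which I believe is part of their results. Failing that, I would use the relation $\L_\sigma(\widetilde G)\cong\L(\widehat G)$ compressed by a character of $D$. Writing $\L(\widetilde G)$ as a cocycle crossed product of $\L(C)$ by $G$, I would then try to reduce to a twisted crossed product of an abelian algebra by $G$ with a scalar-type twist. The aim is to apply Theorem~\ref{thm:scalarfactorization} together with cocycle-hyperlinearity of $G$.
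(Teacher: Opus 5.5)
Your proposal is the paper's proof. The first assertion is exactly Proposition~\ref{prop:typeIdecomp} combined with cocycle-hyperlinearity applied to the normalized cocycle $\bar\sigma$, and the list of groups, including the clause on countable central extensions, is quoted directly from \cite[Theorem~A, Theorem~4.4 and Corollary~4.5]{CF26} rather than derived from \cite[Lemma~3.4]{Thom10}. Your fallback for the central-extension clause is therefore unnecessary. That is just as well: writing $\L_\sigma(\widetilde G)$ as a crossed product over $G$ with coefficients $\L_{\sigma|_C}(C)$ gives, in general, a nontrivial action and a $\cU(\L_{\sigma|_C}(C))$-valued cocycle, so Theorem~\ref{thm:scalarfactorization} would not apply directly.
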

\begin{proof}
Combine Proposition~\ref{prop:typeIdecomp} with
\cite[Theorem~A, Theorem~4.4 and Corollary~4.5]{CF26}.
\end{proof}

\subsection{Exact projective representations}
\begin{proposition}\label{prop:laurent-inflation}
Let $R=\FF_{11}[t,t^{-1}]$ and $Q=\SL_3(R)$. Then $Q$ is residually finite and $H^2(Q,\T)\cong C_{10}$. If $[\sigma]\ne0$, then the full twisted group $C^*$-algebra $C^*_\sigma(Q)$ has no nonzero finite-dimensional representation, and $[\sigma]$ is not inflated from a finite quotient.
\end{proposition}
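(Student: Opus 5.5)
Residual finiteness of $Q=\SL_3(\FF_{11}[t,t^{-1}])$ is the easy part. For each nonzero ideal $J\subset R$, the quotient $R/J$ is finite. The reduction map $Q\to\SL_3(R/J)$ therefore has finite image, and the intersection of the kernels over all $J$ is trivial, because a nonzero Laurent polynomial entry of $g-I$ survives modulo a suitable $(f)$. For the multiplier group I will use the universal central extension. Since $R$ is Euclidean, $\SL_3(R)=\EL_3(R)$, and $\SL_3(R)$ is perfect. By Matsumoto/Steinberg theory, $H_2(\SL_3(R),\ent)\cong K_2(3,R)$, which for $n=3$ agrees with $K_2(R)$ up to stability issues; I will cite van der Kallen's stability for Euclidean rings, or Dennis--Stein for $n\geq3$. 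For the Laurent ring over a field, the fundamental theorem gives
\[
K_2(\FF_{11}[t,t^{-1}])\cong K_2(\FF_{11})\oplus K_1(\FF_{11})\cong 0\oplus\FF_{11}^\times\cong C_{10},
\]
generated by the symbols $\{a,t\}$. Since $Q$ is perfect, the universal coefficient theorem gives $H^2(Q,\T)\cong\operatorname{Hom}(H_2(Q),\T)\cong C_{10}$. Nontrivial classes are thus the pullbacks of characters $\chi$ of $\FF_{11}^\times$ evaluated on the tame-symbol part of the Steinberg extension.

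For the finite-dimensional statement, suppose $\pi$ is a nonzero representation of $C^*_\sigma(Q)$ on $\com^d$. Then $g\mapsto\pi(\lambda_\sigma(g))$ is a projective representation $Q\to\mathrm{PU}(d)$ with class $[\sigma]$. Its image is a finitely generated linear group, and $Q$ has property (T) here (it is elementary over a finitely generated ring with $n=3$; cite Ershov--Jaikin--Kassabov). I expect the cleaner route, however, to be a rigidity argument. Any homomorphism $Q\to\mathrm{PU}(d)$ lifts to $\SL_3(R)\to U(d)/\mu_d$ with bounded-order ambiguity, and the image of $\SL_3(\FF_{11}[t,t^{-1}])$ in a compact Lie group is finite. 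This follows from Margulis superrigidity for the $S$-arithmetic lattice $\SL_3(\FF_{11}[t,t^{-1}])$ in $\SL_3(\FF_{11}((t)))\times\SL_3(\FF_{11}((t^{-1})))$: a positive-characteristic lattice has no infinite image in a characteristic-zero compact group. Hence $\pi\circ\lambda_\sigma$ factors through a finite quotient $Q\to F$. Once we know it factors, $[\sigma]$ becomes inflated from $H^2(F,\T)$, so the second statement implies the first. It therefore suffices to show that $[\sigma]\neq0$ is not inflated from any finite quotient.

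By the congruence subgroup property for $\SL_3$ over this $S$-arithmetic ring (Bass--Milnor--Serre, valid since $n\geq3$), every finite quotient factors through $\SL_3(R/J)$ for some nonzero ideal $J$. Finite quotients of $R$ are products of local rings $\FF_{11}[t]/(f^k)$ with $f\neq t$. I then compare via the inflation map
\[
H^2(\SL_3(R/J),\T)\to H^2(Q,\T),
\]
whose dual is $K_2(3,R)\to K_2(3,R/J)$, i.e.\ $H_2(Q)\to H_2(\SL_3(R/J))$. The symbol $\{a,t\}$ maps to the symbol $\{a,\bar t\}$ in $K_2$ of a finite local ring of characteristic $11$ with residue field $\FF_{11^m}$. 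Such a $K_2$ is a $11$-group, by Dennis--Stein: $K_2$ of finite fields vanishes and the radical contributes only $p$-torsion. The order-dividing-$10$ class therefore dies there. Because the dual map has image of order prime to $10$ landing in $C_{10}$, it is zero, so the inflation map vanishes on $H^2$. The main obstacle is to pin down the $n=3$ stability and the congruence-subgroup identification of finite quotients precisely enough to cite; I would lean on Bass--Milnor--Serre and Dennis--Stein for both.
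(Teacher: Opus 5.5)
Your route differs from the paper's. You use superrigidity to show that every finite-dimensional projective representation of $Q$ has finite image, so non-inflation implies the absence of finite-dimensional representations. You then use the congruence subgroup property to reduce to the quotients $\SL_3(R/J)$, and finish with a comparison of inflation maps. The first two steps are acceptable, if heavier than necessary.

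\textbf{The gap is in the comparison step.} You identify the dual of $H^2(\SL_3(R/J),\T)\to H^2(Q,\T)$ with $K_2(3,R)\to K_2(3,R/J)$, i.e.\ you take $H_2(\SL_3(R/J),\ent)\cong K_2(3,R/J)$, and then compute in the stable group $K_2(R/J)$. But inflation from a finite quotient is governed by the Schur multiplier of that finite group. In rank $3$ the Steinberg group of a finite ring need not be its universal central extension: $K_2(3,\FF_2)=K_2(3,\FF_4)=0$, yet $\SL_3(\FF_2)$ and $\SL_3(\FF_4)$ have nonzero Schur multipliers. So the vanishing of $\{a,t\}$ in $K_2(R/J)$ does not show that $C_{10}=H_2(Q,\ent)$ dies in $H_2(\SL_3(R/J),\ent)$.

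What you actually need is that $H_2(\SL_3(R/J),\ent)$ has no $2$- or $5$-torsion, and you can prove this directly:
\begin{enumerate}[label=\textup{(\arabic*)},leftmargin=2em]
\item Write $R/J=\prod_iB_i$ with $B_i$ finite local with residue field $k_i=\FF_{11^{m_i}}$.
\item Each $\SL_3(B_i)=\EL_3(B_i)$ is perfect, so $H_2$ of the product is the direct sum.
\item The kernel of $\SL_3(B_i)\to\SL_3(k_i)$ is an $11$-group. So Hochschild--Serre with $\ent_{(\ell)}$-coefficients, $\ell\ne 11$, identifies the $\ell$-primary part of $H_2(\SL_3(B_i),\ent)$ with that of $H_2(\SL_3(k_i),\ent)$.
\item The latter vanishes by Steinberg, since the only exceptions for $\SL_3(\FF_q)$ are $q=2,4$.
\end{enumerate}
Inflation is then dual to a homomorphism from $C_{10}$ into an $11$-group, hence zero, and $K_2(R/J)$ is never used.

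A similar issue arises at the start. You need that $\operatorname{St}_3(R)$ is the universal central extension of $Q$ with kernel $C_{10}$. This is a specific rank-$3$ input, which the paper takes from Sugawara and Kunyavski\u\i--Lavrenov--Plotkin--Vavilov. It is not a consequence of Matsumoto--Steinberg theory for fields.

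The paper sidesteps finite-group Schur multipliers, and also superrigidity and the congruence subgroup property. It lifts a finite-dimensional $\sigma$-representation of $Q$ to a unitary representation of $U=\operatorname{St}_3(R)$. The root subgroups of $U$ have exponent $11$, hence finite image. Weyl conjugation together with the commutator relations makes their common kernel a finite-index ideal $I$, so the lift factors through $\operatorname{St}_3(R/I)$ itself rather than through $\SL_3(R/I)$. Hence $C_{10}$ lands in $K_2(3,R/I)=K_2(R/I)$, which is $11$-primary, and the central character $[\sigma]$ must be trivial. Non-inflation is then deduced from the twisted regular representation of a finite quotient, which is the reverse of your implication.
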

\begin{proof}
By \cite[Theorem~5]{Sug21} and \cite[Lemma~6.4]{KLPV25}, the universal central extension is
\[
1\longrightarrow C_{10}\longrightarrow
U=\operatorname{St}_3(R)\longrightarrow Q\longrightarrow1.
\]
Thus $H^2(Q,\T)\cong\widehat{C_{10}}$. Finite-ring reductions give residual finiteness.

For $\pi:U\to\cU(\bM_n)$, the root-subgroup images are finite abelian of exponent $11$. Weyl conjugation and the Steinberg relations make their common kernel a finite-index ideal $I\lhd R$. Hence $\pi$ factors through $\operatorname{St}_3(B)$, $B=R/I$. By \cite[Lemma~17 and its proof]{KS09}, $K_{2,3}(B)=K_2(B)$. In each local factor, unit symbols generate $K_2$ and are bimultiplicative \cite[\S3, Theorem~3.2]{KN06}. Coefficient-field symbols vanish \cite[Lemma~6.1\textup{(1)}]{KLPV25}, and principal units have $11$-power order. Thus $K_2(B)$ is $11$-primary, so $\pi$ kills $C_{10}$.

A finite-dimensional $\sigma$-projective representation lifts to $U$ with central character $[\sigma]$, which must be trivial. An inflated class would admit such a representation by the twisted regular representation of the finite quotient.
\end{proof}

\begin{example}\label{ex:fd-gap-qwep}
Let
\[
P=\ent[1/5]^2\rtimes\ent,\qquad
(a,b,k)(a',b',k')=(a+a'+kb',b+b',k+k').
\]
By \cite[Definition~2.2 and Lemma~2.3]{Eckhardt25}, $P$ is amenable and residually finite. Thus $B=C^*(P)$ is nuclear and residually finite-dimensional, since its finite-quotient traces converge to the faithful canonical trace.

Put $b_0=(0,1,0)$, $g_0=(1/5,0,0)$ and $\zeta=e^{2\pi i/5}$. The normal closure of $b_0$ is $H=\ent^2$, since $[(0,0,1),b_0]=(1,0,0)$. In $E=P/H$, $z=g_0H$ is central of order $5$. For $G=E/\langle z\rangle$, the character $z\mapsto\zeta$ defines $\sigma$ with
\[
C^*_\sigma(G)\cong B/I,\qquad
I=\overline{\langle u_{b_0}-1,u_{g_0}-\zeta1\rangle}.
\]
Here $I$ is the closed two-sided ideal, and $\L_\sigma(G)$ is injective since $G$ is amenable.

In any irreducible finite-dimensional representation $\rho$ of $P$, the first coordinate acts by a character $\chi$ of finite order prime to $5$ \cite[Lemmas~2.1 and~2.7]{Eckhardt25}. If $\chi(1)=1$, then $\rho(g_0)=I$; otherwise $\tr(\rho(b_0))=0$. Since $|1-\zeta|^2<2$, every unital $\pi:B\to\bM_n$ satisfies
\begin{equation}\label{eq:fd-gap-injective}
\|\pi(u_{b_0})-I_n\|_{2,n}^2+
\|\pi(u_{g_0})-\zeta I_n\|_{2,n}^2
\geq |1-\zeta|^2>0,
\end{equation}
where $\|x\|_{2,n}^2=\tr_n(x^*x)$. Hence $C^*_\sigma(G)$ has no nonzero finite-dimensional representation and $[\sigma]\ne0$.
\end{example}

\section{Scalar twists of regular wreath products}\label{sec:regularwreath}
We use finite quotients and the bicharacters describing commutation between different coordinates; compare \cite[Lemma~5.2]{CF26}.

\begin{theorem}[{Theorem \ref{thm:wreathoverview}}]\label{thm:twistedregularwreath}
Let \(A,\Gamma\) be countable groups, with \(\Gamma\) residually finite, and put \(W=A^{(\Gamma)}\rtimes\Gamma=A\wr\Gamma\). Identify \(A\) with its identity-coordinate copy \(A_e\). For a normalized scalar cocycle \(\sigma\) on \(W\), put \(\alpha=\sigma|_{A_e\times A_e}\) and \(\eta=\sigma|_{\Gamma\times\Gamma}\). Then
\begin{equation}\label{eq:wreathcriterion}
\L_\sigma(W)\text{ is QWEP}
\quad\Longleftrightarrow\quad
\L_\alpha(A)\text{ and }\L_\eta(\Gamma)\text{ are QWEP}.
\end{equation}
\end{theorem}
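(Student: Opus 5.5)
The forward implication uses only restriction to subgroups. For any subgroup \(H\le W\), the trace-preserving conditional expectation \(\L_\sigma(W)\to\L_{\sigma|_{H\times H}}(H)\), given by \(\lambda_\sigma(g)\mapsto 1_H(g)\lambda_\sigma(g)\), shows that QWEP passes from \(\L_\sigma(W)\) to \(\L_{\sigma|_{H\times H}}(H)\). Applying this to \(H=A_e\) and \(H=\Gamma\) gives the forward direction. All the work is in the converse. I would use the central-extension criterion \eqref{eq:centralextensioncriterion} in its Connes-embedding form. Concretely, I aim to produce maps \(W\to\cU(\mathcal Q_\omega)\) that are \(\sigma\)-projective representations and whose traces vanish off \(e\). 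Then \(\L_\sigma(W)\) embeds trace-preservingly into \(\mathcal Q_\omega\), and Connes embeddability is equivalent to QWEP here.

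First I would normalize \(\sigma\). Restricting \(\sigma\) to \(A^{(\Gamma)}\) gives the cocycle \(\alpha\) on each coordinate \(A_\gamma\), up to the coboundary transported by the \(\Gamma\)-action. It also gives bicharacters \(\beta_{\gamma,\delta}:A\times A\to\T\) measuring the commutators \(\lambda_\sigma(a_\gamma)\lambda_\sigma(b_\delta)\lambda_\sigma(a_\gamma)^*\lambda_\sigma(b_\delta)^*\) for \(\gamma\ne\delta\). By \(\Gamma\)-equivariance, \(\beta_{\gamma,\delta}=\beta_{\gamma^{-1}\delta}\) depends only on \(\gamma^{-1}\delta\). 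Each \(\beta_\gamma\) is a bicharacter on \(A\) that factors through \(A^{ab}\) on both sides, and it satisfies \(\beta_{\gamma^{-1}}(a,b)=\overline{\beta_\gamma(b,a)}\). As in \cite[Lemma~5.2]{CF26}, \(\sigma\) is then cohomologous to a cocycle determined by \(\alpha\), \((\beta_\gamma)_\gamma\) and \(\eta\), plus possibly a mixed term pairing \(\Gamma\) with \(A^{(\Gamma)}\) through a \(\Gamma\)-equivariant character. I would absorb that mixed term by adjusting the implementing unitaries of \(A\) by characters.

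Next comes the finite-quotient model, which I expect to be the main obstacle. The family \(\beta_\gamma\) may be nontrivial for infinitely many \(\gamma\), but for fixed \(a,b\) only finitely many \(\beta_\gamma(a,b)\) can be nontrivial. This is forced by the cocycle identity on the finitely supported group \(A^{(\Gamma)}\): the commutator pairing of two fixed elements involves finitely many coordinates. Fix finitely many elements of \(W\) and a finite set \(S\subset\Gamma\) containing their supports and the relevant \(\beta\)-supports. Using residual finiteness, pick a finite-index normal \(N\lhd\Gamma\) with \(SS^{-1}\cap N=\{e\}\).

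The idea is then to replace \(W\) by \(A\wr(\Gamma/N)\rtimes\)-type data induced to \(\Gamma\). On the coordinates \(\Gamma/N\), I would build the twisted tensor product of copies of a matrix model \(\rho\) of \(\L_\alpha(A)\). The cross-coordinate twists \(\beta\) are implemented by inserting commutation unitaries, as in a Weyl/quantum-torus construction: write \(\rho(a)\) on the \(x\)-th coordinate as \(\rho(a)\otimes\prod_{y<x}\) of diagonal character unitaries. The difficulty is that \(\beta_\gamma\) need not come from finite-order characters, and the ordering must be compatible with the \(\Gamma/N\)-permutation action. I would handle this by choosing a cyclic-invariant form, averaging over orderings, or tensoring with a model of \(\L_{\beta}(A^{(\Gamma/N)})\) obtained from amenability of the abelianized part, which has an injective twisted algebra.

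Finally, I combine the pieces. I let \(\Gamma\) act through a \(\sigma\)-compatible permutation of coordinates. I tensor with a model \(\pi\) of \(\L_\eta(\Gamma)\), and I also tensor with \(\lambda_{\Gamma/N}\)-type matrices, so that traces of elements with nontrivial \(\Gamma\)-component vanish asymptotically. Taking an ultraproduct over \(N\) shrinking, and using the tensor argument of Proposition~\ref{prop:projectivecriterion}, gives the \(\sigma\)-representation with the correct trace. Hence \(\L_\sigma(W)\) is Connes embeddable, and therefore QWEP.
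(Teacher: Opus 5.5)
Your outline is the paper's: necessity via expected subalgebras, and for sufficiency a finite quotient $Q$ of $\Gamma$, coordinatewise copies of $\L_\alpha(A)$ twisted by the cross-coordinate bicharacters through an injective twisted algebra of an abelian group, the shift action of $Q$, a tensor factor for $\eta$, and moment matching. There is, however, a genuine gap at the step you call the main obstacle. It is not carried out, and the reduction you propose for it rests on a false claim.

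\textbf{The finiteness claim is false.} The map $\gamma\mapsto\beta_\gamma(a,b)$ need not be finitely supported. Take $A=\Gamma=\ent$ and reals $\theta_k\notin\ent$ for $k\geq1$. The bicharacter $c(x,y)=\exp\bigl(2\pi i\sum_{i>j}\theta_{i-j}x_iy_j\bigr)$ on $\ent^{(\ent)}$ is shift-invariant. Hence $((x,g),(y,h))\mapsto c(x,g\cdot y)$ is a cocycle on $\ent\wr\ent$, by the same computation as in Corollary~\ref{cor:CHwreath}. The unit vectors at coordinates $0$ and $k$ then commute only up to $e^{2\pi i\theta_k}\neq1$, for every $k\geq1$. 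So $S$ cannot contain ``the $\beta$-supports'', and $(\beta_\gamma)$ does not descend to $\Gamma/N$.

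\textbf{Your injectivity condition is too weak.} $SS^{-1}\cap N=\{e\}$ only makes $\Gamma\to\Gamma/N$ injective on $S$. Two pairs $(s,t),(s',t')$ in $S$ with $s^{-1}t\neq s'^{-1}t'$ but equal images would demand two different bicharacters at the same point of $\Gamma/N$. The paper instead takes $q$ injective on $\{e\}\cup S\cup S^{-1}S$, with $q(t)\neq e$ for the nontrivial terminal labels. It sets $\kappa^Q_{q(s^{-1}t)}=\kappa_{s^{-1}t}$ for $s\neq t$ in $S$, extends by $\kappa^Q_{r^{-1}}(y,x)=\overline{\kappa^Q_r(x,y)}$, and puts trivial bicharacters elsewhere. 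This is consistent: an involution in $q(S^{-1}S)$ has an involution of $\Gamma$ as its unique preimage, where the symmetry already holds. Values off $q(S^{-1}S)$ never enter the chosen words.

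\textbf{The ordering is not an obstruction.} You do not need a cyclic-invariant form or averaging; averaging over orderings would not produce a representation anyway. Take an arbitrary ordering $q_1,\dots,q_m$ of $Q$ and put $\chi_Q(x,y)=\prod_{i>j}\kappa^Q_{q_i^{-1}q_j}(x_i,y_j)$ on $B^Q$, where $B=A_{\mathrm{ab}}$. Then $\M_Q=\L_{c_Q}(A^Q)$ embeds trace-preservingly in $(\barot_i\L_\alpha(A))\barot\L_{\chi_Q}(B^Q)$, and the last factor is injective. The idea you are missing is the following. $\M_Q$ is determined by the within-coordinate and cross-coordinate relations together with its trace, since ordered monomials form a basis indexed by $A^Q$. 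These relations are shift-invariant, because $(rs)^{-1}(rt)=s^{-1}t$. Hence $U_s(a)\mapsto U_{rs}(a)$ is a genuine trace-preserving action even though the ordering is not $Q$-invariant. It follows that $\M_Q\rtimes Q\subset\bM_{|Q|}\barot\M_Q$ is Connes embeddable.

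\textbf{Work with exact algebras, not approximate models.} Doing this with exact von Neumann algebras, rather than approximate matrix models of $\L_\alpha(A)$ inside a twisted tensor product, avoids controlling approximate relations. Matrices enter only at the end, through finitely many moments.

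\textbf{The normalization step is unnecessary.} Put $\rho=\sigma\,\overline{\pi^*\eta}$, so that $\rho|_\Gamma=1$, and define $U_s(a)=V(s)U_e(a)V(s)^*$. Covariance is then exact and no mixed term ever appears. The factor $\eta$ is recovered from $\L_\sigma(W)\hookrightarrow\L_\rho(W)\barot\L_\eta(\Gamma)$. Your ``adjusting by characters'' is also imprecise: the coordinate restrictions of $\sigma$ agree with $\alpha$ only up to coboundaries, so general cochains are needed.
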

\begin{proof}
Necessity follows from the expected subgroup inclusions. Conversely, let \(\pi:W\to\Gamma\) be the quotient, and put
\[
\rho(w,z)=\sigma(w,z)\overline{\eta(\pi(w),\pi(z))}.
\]
Then \(\rho|_\Gamma=1\) and \(\rho|_{A_e}=\alpha\). In \(\L_\rho(W)\), let \(V(g)\) and \(U_e(a)\) be the canonical unitaries associated with \(\Gamma\) and \(A_e\), respectively. Put \(U_s(a)=V(s)U_e(a)V(s)^*\). We have
\begin{align}
U_s(a)U_s(b)&=\alpha(a,b)U_s(ab),\label{eq:lampwithin}\\
V(g)U_s(a)V(g)^*&=U_{gs}(a).\label{eq:lampcovariance}
\end{align}
For \(s\ne t\), the scalar commutator of \(U_s(a),U_t(b)\) is multiplicative in each variable. Thus it factors through \(B=A_{\mathrm{ab}}\); covariance gives bicharacters \(\kappa_r:B\times B\to\T\), \(r\ne e\), with
\begin{align}
U_s(a)U_t(b)&=\kappa_{s^{-1}t}(\bar a,\bar b)U_t(b)U_s(a),\label{eq:lampcross}\\
\kappa_{r^{-1}}(y,x)&=\overline{\kappa_r(x,y)}.\label{eq:lampsymmetry}
\end{align}

Fix finitely many words, including adjoints. Move their group letters to the right using \eqref{eq:lampcovariance}, and let \(S,T\subset\Gamma\) be the resulting coordinate indices and terminal group labels. Choose a finite quotient \(q:\Gamma\to Q\) injective on \(\{e\}\cup S\cup S^{-1}S\), with \(q(t)\ne e\) for \(t\in T\setminus\{e\}\). Define bicharacters on \(Q\setminus\{e\}\) by
\[
\kappa^Q_{q(s^{-1}t)}=\kappa_{s^{-1}t}\quad(s,t\in S,\ s\ne t),
\qquad
\kappa^Q_{r^{-1}}(y,x)=\overline{\kappa^Q_r(x,y)},
\]
and use the trivial bicharacter on the remaining inverse pairs. This is consistent: \(q\) separates the inverse-closed difference set, and actual involutions already satisfy \eqref{eq:lampsymmetry}.

Order \(Q=\{q_1,\ldots,q_m\}\), and define
\[
\chi_Q(x,y)=\prod_{i>j}\kappa^Q_{q_i^{-1}q_j}(x_i,y_j)
\qquad(x,y\in B^Q),
\]
\[
c_Q(a,b)=\left(\prod_{i=1}^m\alpha(a_i,b_i)\right)\chi_Q(\bar a,\bar b),
\qquad \M_Q=\L_{c_Q}(A^Q).
\]
The bicharacter \(\chi_Q\) is a cocycle. The coordinate generators of \(\M_Q\) satisfy \eqref{eq:lampwithin} and \eqref{eq:lampcross} with \(\kappa^Q\): the factors in \(\chi_Q\) arise from ordering the product of two coordinate monomials. Moreover,
\begin{equation}\label{eq:finite-lamp-embedding}
\M_Q\longrightarrow
\left(\barot_{i=1}^m\L_\alpha(A)\right)\barot\L_{\chi_Q}(B^Q)
\end{equation}
is the trace-preserving embedding given by
\[
\lambda_{c_Q}(a)\longmapsto
\left(\bigotimes_{i=1}^m\lambda_\alpha(a_i)\right)\otimes\lambda_{\chi_Q}(\bar a).
\]
Its first factor supplies trace zero for \(a\ne e\). Since \(B^Q\) is abelian, \(\L_{\chi_Q}(B^Q)\) is injective; hence \(\M_Q\) is Connes embeddable.

The assignments \(U_s(a)\mapsto U_{rs}(a)\), \(r,s\in Q\), preserve the coordinate relations, because
\[
(rs)^{-1}(rt)=s^{-1}t.
\]
Ordered monomials form a basis indexed by \(A^Q\), so these assignments define a genuine trace-preserving action of \(Q\) on \(\M_Q\). Its crossed product satisfies
\[
\Pcal_Q=\M_Q\rtimes Q\hookrightarrow \bM_{|Q|}\barot \M_Q
\]
trace-preservingly, and is therefore Connes embeddable.

Replace \(U_s(a),V(g)\) in the chosen words by the corresponding \(q(s)\)-coordinate and \(q(g)\)-unitaries in \(\Pcal_Q\). Their traces agree: nonidentity terminal labels and nontrivial coordinate configurations remain nontrivial, while words reducing to the identity retain all \(\alpha\)-, \(\kappa\)- and inverse phases. Approximating these finite collections of moments by matrices proves that \(\L_\rho(W)\) is Connes embeddable. Finally,
\[
\L_\sigma(W)\hookrightarrow\L_\rho(W)\barot\L_\eta(\Gamma),
\qquad
\lambda_\sigma(w)\longmapsto\lambda_\rho(w)\otimes\lambda_\eta(\pi(w))
\]
is trace-preserving, since \(\rho\,\pi^*\eta=\sigma\). This proves sufficiency.
\end{proof}

\begin{corollary}\label{cor:CHwreath}
For countable \(A\) and residually finite \(\Gamma\),
\[
A\wr\Gamma\text{ is cocycle-hyperlinear}\quad\Longleftrightarrow\quad
A\text{ and }\Gamma\text{ are cocycle-hyperlinear}.
\]
For amenable \(A\), criterion \eqref{eq:wreathcriterion} reduces to the restriction to \(\Gamma\). In particular, \(\ent\wr\Gamma\) is cocycle-hyperlinear exactly when \(\Gamma\) is cocycle-hyperlinear.
\end{corollary}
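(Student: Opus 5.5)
The plan is to deduce the corollary directly from Theorem~\ref{thm:twistedregularwreath}, using the definition of cocycle-hyperlinearity: $G$ is cocycle-hyperlinear when $\L_\sigma(G)$ is QWEP (equivalently Connes embeddable) for every normalized scalar cocycle $\sigma$.

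\emph{Forward direction.} Assume $A\wr\Gamma$ is cocycle-hyperlinear. Take any $\alpha_0\in Z^2(A,\T)$ and $\eta_0\in Z^2(\Gamma,\T)$. It suffices to produce a single cocycle $\sigma$ on $W$ with $\sigma|_{A_e\times A_e}=\alpha_0$ and $\sigma|_{\Gamma\times\Gamma}=\eta_0$; then the necessity half of \eqref{eq:wreathcriterion} (expected subgroup inclusions) gives QWEP of $\L_{\alpha_0}(A)$ and $\L_{\eta_0}(\Gamma)$.

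\begin{itemize}
\item For $\Gamma$ this is immediate: pull $\eta_0$ back along the quotient $\pi:W\to\Gamma$, whose restriction to $\Gamma$ is the identity.
\item For $A$, first pass to the abelianization-independent route: $\L_{\alpha_0}(A)$ sits with expectation inside $\L_{\alpha_0\otimes\cdots}$ of a product. A cleaner choice is the retraction $W\to A$ that sends $A^{(\Gamma)}$ to $A$ by multiplying coordinates and kills $\Gamma$. This is a homomorphism only when $A$ is abelian, so it fails in general.
\item For nonabelian $A$, use instead the subgroup $A_e\le W$. A subgroup carries an expected inclusion $\L_{\sigma|}(A_e)\subset\L_\sigma(W)$, but we still need an extension of $\alpha_0$ to $W$.
\end{itemize}

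To build that extension, I would first extend $\alpha_0$ to the direct sum $A^{(\Gamma)}$ as the product cocycle $\prod_s\alpha_0(a_s,b_s)$. This cocycle is $\Gamma$-invariant under coordinate shift, so it extends to the semidirect product $W$ via the standard formula $\sigma((x,g),(y,h))=c(x,g\cdot y)$. One has to check that this formula satisfies the cocycle identity, which holds precisely because of the invariance. I expect this explicit extension to be the only step requiring care.

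\emph{Backward direction.} Given any $\sigma$ on $W$, its restrictions $\alpha$ and $\eta$ are cocycles on $A$ and $\Gamma$. By hypothesis $\L_\alpha(A)$ and $\L_\eta(\Gamma)$ are QWEP, so sufficiency in Theorem~\ref{thm:twistedregularwreath} gives that $\L_\sigma(W)$ is QWEP.

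\emph{Amenable $A$.} If $A$ is amenable, every $\L_\alpha(A)$ is injective, hence QWEP. The criterion \eqref{eq:wreathcriterion} therefore reduces to QWEP of $\L_\eta(\Gamma)$. Taking $A=\ent$ and letting $\sigma$ range over all cocycles, with the forward direction realizing every $\eta$, yields the final statement.
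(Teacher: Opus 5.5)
Your proposal is correct and follows essentially the same route as the paper. It uses sufficiency from Theorem~\ref{thm:twistedregularwreath}; for necessity, it inflates $\eta$ along $W\to\Gamma$ and extends $\alpha$ by the shift-invariant product cocycle $c(a,b)=\prod_s\alpha(a_s,b_s)$ via $\widetilde c((a,g),(b,h))=c(a,g\cdot b)$, then applies expected subgroup inclusions; and it uses injectivity of $\L_\alpha(A)$ for amenable $A$. The abandoned retraction detour in your second bullet is unnecessary and can simply be deleted.
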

\begin{proof}
Sufficiency follows from Theorem~\ref{thm:twistedregularwreath}. Conversely, every cocycle on \(\Gamma\) inflates to \(W\). For \(\alpha\in Z^2(A,\T)\), put
\[
c(a,b)=\prod_{s\in\Gamma}\alpha(a_s,b_s),\qquad
\widetilde c((a,g),(b,h))=c(a,g\cdot b).
\]
The finite product \(c\) is shift-invariant, so \(\widetilde c\) is a cocycle on \(W\) restricting to \(\alpha\) on \(A_e\). Expected subgroup inclusions give necessity. If \(A\) is amenable, all its scalar-twisted group algebras are injective.
\end{proof}

Together with \cite[Theorem B]{CF26}, this gives cocycle-hyperlinearity of
\[
\ent\wr\SL_n(\ent),\quad\ent\wr\operatorname{GL}_n(\ent),\quad
\ent\wr\operatorname{PSL}_n(\ent)\qquad(n\geq2),
\]
as well as \(\SL_3(\ent)\wr \FF_2\) and \(\SL_3(\ent)\wr\SL_3(\ent)\).

\begin{remark}\label{rem:augmentation}
Let \(W=\ent^{(\Gamma)}\rtimes\Gamma\), \(I_\Gamma=\ker(\sum:\ent^{(\Gamma)}\to\ent)\), and \(H=I_\Gamma\rtimes\Gamma\). For \(\rho\in Z^2(\Gamma,\T)\), put
\[
\Omega_\rho((a,g),(b,h))=\prod_{t\in\Gamma}\rho(g,t)^{b_t}.
\]
The cocycle identity follows from that of $\rho$ and $\sum_t a_t=0$. Its restrictions along
\[
g\longmapsto(0,g),\qquad g\longmapsto(\delta_g-\delta_e,g)
\]
are $1$ and $\rho$. These embeddings are conjugate by $(-\delta_e,e)\in W$, so extension of $[\Omega_\rho]$ to $W$ forces $[\rho]=0$. Conversely, if $\rho=\delta c$, then $\Omega_\rho=\delta F$ for $F(a,g)=\prod_t c(t)^{a_t}$. Thus $[\Omega_\rho]$ extends to $W$ exactly when $[\rho]=0$.
\end{remark}

\section{Fourier multipliers and Schur multipliers}\label{sec:fourier}
In this section, for complementing the point of view of QWEP stability, we start to discuss the multiplier transference results obtained by Xiong, Xu and the author. All related results are found and cited from \cite{xxz26} and \cite{Zeng23}.

Let $(\M,G,\alpha,\sigma)$ be a trace-preserving twisted dynamical system.
Amenability of $\alpha$ is understood in the sense of
\cite[Definition~2.15]{xxz26}.
As in \cite[Theorems~3.2--3.3 and~4.1--4.2]{xxz26}, put
\[
\R=\M\rtimes_{\alpha,\sigma}G,\qquad
\N=\M\barot B(\ell_2(G)).
\]
We identify $\M$ with $\pi_\alpha(\M)$. For
$\varphi\in\ell_\infty(G)$, define
\[
T_\varphi\Bigl(\sum_{s\in F}a_s\lambda_\sigma(s)\Bigr)
=\sum_{s\in F}\varphi(s)a_s\lambda_\sigma(s),
\qquad a_s\in\M,
\]
where $F\subset G$ is finite. We use the notation
$\mathsf M(L_p(\R))$ and $\mathsf M_{\cb}(L_p(\R))$ for the spaces
of bounded and completely bounded Fourier multiplier symbols, with norms
\[
\|\varphi\|_{\mathsf M(L_p(\R))}=\|T_\varphi\|,
\qquad
\|\varphi\|_{\mathsf M_{\cb}(L_p(\R))}=\|T_\varphi\|_{\cb}.
\]
For $\widetilde\varphi(s,t)=\varphi(st^{-1})$, the Schur multiplier is
\[
M_{\widetilde\varphi}([a_{s,t}])
=[\varphi(st^{-1})a_{s,t}].
\]
The corresponding symbol spaces are denoted by $\mathsf S$ and
$\mathsf S_{\cb}$, on either $S_p(\ell_2(G))$ or $L_p(\N)$.
We put the multiplier norm equal to $\infty$ when no bounded extension exists. 

From now on, all the results concerning transference of multipliers are recorded from \cite{xxz26} and \cite{Zeng23} only for completeness and discussions, no novelties and originalities are claimed in the results.

\begin{proposition}[{\cite[Theorem~3.2]{xxz26}}]\label{prop:fourier-qwep}
Let $1<p<\infty$. If $\R$ is QWEP, then
\[
\mathsf M_{\cb}(L_p(\L(G)))
\subset\mathsf M_{\cb}(L_p(\R))\quad\text{contractively}.
\]
\end{proposition}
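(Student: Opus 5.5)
The plan is to run the argument sketched in the introduction: tensor-extend the multiplier by $\id_\R$ and restrict to a copy of $L_p(\R)$ sitting inside $L_p(\R\barot\L(G))$. The estimate is for finite $\R$ with the canonical trace $\widehat\tau$, so $\alpha$ is trace-preserving.

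\emph{Step 1 (comultiplication).} Consider the map
\[
\Delta:\R\longrightarrow\R\barot\L(G),\qquad x\lambda_\sigma(g)\longmapsto x\lambda_\sigma(g)\otimes\lambda(g).
\]
It is a normal trace-preserving unital $*$-homomorphism, a special case of the absorption principle of Proposition~\ref{prop:diagonal} (equivalently \cite[Proposition~2.6]{xxz26}) with the trivial cocycle in the second leg. Multiplicativity follows from $\lambda(g)\lambda(h)=\lambda(gh)$ and from the fact that $\lambda_\sigma$ already carries $\sigma$ and the $\alpha$-covariance. The trace identity $(\widehat\tau\otimes\tau_G)(x\lambda_\sigma(g)\otimes\lambda(g))=\delta_{g,e}\tau(x)$ holds by direct computation. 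Consequently $\Delta$ extends to a complete isometry $L_p(\R)\to L_p(\R\barot\L(G))$ for every $1\le p<\infty$, and the same holds after tensoring with $S_p$, so matrix levels are preserved.

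\emph{Step 2 (tensor extension).} Let $\varphi\in\mathsf M_{\cb}(L_p(\L(G)))$. Since $\R$ is QWEP, the tensor extension theorem recalled in the introduction applies: for a finite QWEP algebra $\M'$ and a completely bounded $T$ on $L_p(\L(G))$, the map $\id_{L_p(\M')}\otimes T$ extends to $L_p(\M'\barot\L(G))$ with $\|\id\otimes T\|_{\cb}\le\|T\|_{\cb}$. This is the point where QWEP is used, through Pisier's vector-valued $L_p$ theory and the embedding of $\R$ into an ultrapower with an expectation. Apply it with $\M'=\R$ and $T=T_\varphi$. I expect this to be the only nontrivial input. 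It is cited rather than reproved, so the one thing to check is that its hypotheses are exactly ``$\R$ finite and QWEP''.

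\emph{Step 3 (intertwining and density).} On the dense span of $x\lambda_\sigma(g)$ one has
\[
(\id\otimes T_\varphi)\Delta\bigl(x\lambda_\sigma(g)\bigr)=\varphi(g)\,x\lambda_\sigma(g)\otimes\lambda(g)=\Delta T_\varphi\bigl(x\lambda_\sigma(g)\bigr).
\]
Hence $\Delta T_\varphi=(\id\otimes T_\varphi)\Delta$ on finitely supported elements, and the same identity holds at every matrix level. Because $\Delta$ is a complete isometry, this gives
\[
\|T_\varphi\|_{\cb,L_p(\R)}\le\|\id\otimes T_\varphi\|_{\cb}\le\|\varphi\|_{\mathsf M_{\cb}(L_p(\L(G)))}.
\]
Finitely supported elements are dense in $L_p(\R)$ for $p<\infty$, so $T_\varphi$ extends boundedly from this dense subspace. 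This yields the contractive inclusion.
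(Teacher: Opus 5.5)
Your proposal is correct and is essentially the paper's proof: the trace-preserving embedding $x\lambda_\sigma(g)\mapsto x\lambda_\sigma(g)\otimes\lambda(g)$ into $\R\barot\L(G)$, the intertwining relation with $\id\otimes T_\varphi$, and Junge's tensor-extension theorem for QWEP algebras \cite{Junge04} as the only nontrivial input. One minor remark: this map is not literally one of the two maps in Proposition~\ref{prop:diagonal}, and $\sigma$ in this section may be operator-valued, but your direct multiplicativity and trace check (or the general absorption principle of \cite[Proposition~2.6]{xxz26}) covers it.
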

\begin{proof}
The trace-preserving embedding
\[
\Phi:\R\longrightarrow\R\barot\L(G),\qquad
\Phi(a\lambda_\sigma(s))=a\lambda_\sigma(s)\otimes\lambda(s)
\]
induces a complete isometry on $L_p$. Write $\widetilde T_\varphi$
for the Fourier multiplier on $L_p(\L(G))$. Then
\[
\Phi T_\varphi=(\id\otimes\widetilde T_\varphi)\Phi,
\qquad
\|T_\varphi\|_{\cb}
\le\|\id\otimes\widetilde T_\varphi\|_{\cb}
=\|\widetilde T_\varphi\|_{\cb}.
\]
The last equality uses the characterization of QWEP von Neumann algebras (see \cite{Junge04}); its ultraproduct
construction is given in \cite[Theorem~3.11 and Corollary~3.12]{Junge04}.
\end{proof}

\begin{corollary}[{\cite[Theorem~3.3]{xxz26}}]\label{cor:scalar-fourier-qwep}
Let $\sigma\in Z^2(G,\T)$ and $1<p<\infty$. If $\L_\sigma(G)$ is QWEP, then
\[
\mathsf M_{\cb}(L_p(\L_\sigma(G)))
=\mathsf M_{\cb}(L_p(\L(G)))\quad\text{isometrically}.
\]
\end{corollary}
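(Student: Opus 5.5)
We prove the two inclusions separately; both come from Proposition~\ref{prop:fourier-qwep}. There, $\M=\com$ with the trivial action, so that $\R=\L_\sigma(G)$ is QWEP by hypothesis.

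\emph{First inclusion.} Apply Proposition~\ref{prop:fourier-qwep} to $\R=\L_\sigma(G)$. This gives the contractive inclusion
\[
\mathsf M_{\cb}(L_p(\L(G)))\subset\mathsf M_{\cb}(L_p(\L_\sigma(G))).
\]
Concretely, $\lambda_\sigma(s)\mapsto\lambda_\sigma(s)\otimes\lambda(s)$ is trace-preserving and intertwines $T_\varphi$ with $\id\otimes\widetilde T_\varphi$. Since $\L_\sigma(G)$ is QWEP, Junge's tensor extension gives $\|\id_{L_p(\L_\sigma(G))}\otimes\widetilde T_\varphi\|_{\cb}=\|\widetilde T_\varphi\|_{\cb}$.

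\emph{Reverse inclusion.} Use the diagonal embedding from Lemma~\ref{lem:scalar}:
\[
\lambda(s)\longmapsto\lambda_{\bar\sigma}(s)\otimes\lambda_\sigma(s)\in\L_{\bar\sigma}(G)\barot\L_\sigma(G).
\]
It is a trace-preserving normal embedding, so it induces a complete isometry on $L_p$. It intertwines the multiplier $\widetilde T_\varphi$ on $L_p(\L(G))$ with $\id\otimes T^\sigma_\varphi$, where $T^\sigma_\varphi$ is the multiplier on $L_p(\L_\sigma(G))$. We therefore need
\[
\|\id_{L_p(\L_{\bar\sigma}(G))}\otimes T^\sigma_\varphi\|_{\cb}=\|T^\sigma_\varphi\|_{\cb}.
\]
This again follows from Junge's extension theorem once $\L_{\bar\sigma}(G)$ is QWEP. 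That holds because $\L_{\bar\sigma}(G)\cong\L_\sigma(G)^{\mathrm{op}}$, or equivalently it is the complex conjugate algebra of $\L_\sigma(G)$, and QWEP passes to both (Section~\ref{sec:input}). Combining gives $\|\widetilde T_\varphi\|_{\cb}\le\|T^\sigma_\varphi\|_{\cb}$, and with the first inclusion we obtain equality of the norms.

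\emph{Main obstacle.} The key step is the tensor extension with a QWEP finite algebra on the left. It requires the identity $\|\id_{L_p(\N)}\otimes T\|_{\cb}=\|T\|_{\cb}$ for QWEP finite $\N$, which comes from embedding $\N$ into an ultraproduct of matrix algebras with a compatible $L_p$ structure (\cite[Theorem~3.11, Corollary~3.12]{Junge04}). We also have to check that the multiplier norms are defined first on the dense span of the $\lambda(s)$ and then extended, so that the conventions for infinite norms agree on both sides.
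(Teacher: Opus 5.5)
Your proposal is correct and follows the paper's proof. One inclusion is Proposition~\ref{prop:fourier-qwep} with $\M=\com$. The reverse uses the trace-preserving diagonal embedding $\lambda(s)\mapsto\lambda_{\bar\sigma}(s)\otimes\lambda_\sigma(s)$ together with the tensor extension, which applies because $\L_{\bar\sigma}(G)$, the conjugate (equivalently, opposite) algebra of $\L_\sigma(G)$, is QWEP; you spell out the intertwining and this QWEP justification, which the paper leaves implicit.
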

\begin{proof}
One inclusion follows from Proposition~\ref{prop:fourier-qwep}.
For the converse, $\L_{\bar\sigma}(G)$ is QWEP and
\[
\lambda(s)\longmapsto
\lambda_{\bar\sigma}(s)\otimes\lambda_\sigma(s)
\]
is a trace-preserving embedding. Apply the same tensor-extension
argument to $\id\otimes T_\varphi$.
\end{proof}

\begin{lemma}\label{lem:amenable-supported-symbols}
Let $H\leq G$ be amenable, $\sigma\in Z^2(G,\T)$ and $1<p<\infty$.
If $\varphi\in\ell_\infty(G)$ is supported on $H$, then
\begin{equation}\label{eq:amenable-supported-symbols}
\|\varphi\|_{\mathsf M_{\cb}(L_p(\L_\sigma(G)))}
=\|\varphi\|_{\mathsf M_{\cb}(L_p(\L(G)))}.
\end{equation}
\end{lemma}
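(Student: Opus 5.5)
Since $\varphi$ vanishes off $H$, the multiplier $T_\varphi$ on either side only sees the subalgebra generated by $\lambda_\sigma(H)$ (respectively $\lambda(H)$). The plan is to reduce both norms to the subgroup $H$ and then use amenability of $H$ to remove the twist there.

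\emph{Step 1: reduction to $H$.} Choose right coset representatives and decompose $G=\bigsqcup_{c}Hc$. In $\L_\sigma(G)$ we have $\lambda_\sigma(hc)=\overline{\sigma(h,c)}\lambda_\sigma(h)\lambda_\sigma(c)$, so every element of $L_p(\L_\sigma(G))$ is a column-type sum $\sum_c x_c\lambda_\sigma(c)$ with $x_c\in L_p(\L_\sigma(H))$. A multiplier supported on $H$ acts by $T_\varphi(x_c\lambda_\sigma(c))=(T_\varphi|_H x_c)\lambda_\sigma(c)$ for $c\in H$, and kills the other cosets. Thus $T_\varphi=\iota\circ T^H_\varphi\circ\E_H$, where $\E_H$ is the trace-preserving conditional expectation onto $\L_\sigma(H)$. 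This gives
\[
\|\varphi\|_{\mathsf M_{\cb}(L_p(\L_\sigma(G)))}\le\|\varphi|_H\|_{\mathsf M_{\cb}(L_p(\L_{\sigma}(H)))}.
\]
The reverse inequality holds by restricting to the complemented subspace $L_p(\L_\sigma(H))$. The same argument applies with $\sigma=1$. It therefore suffices to prove \eqref{eq:amenable-supported-symbols} for $G=H$ amenable.

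\emph{Step 2: amenable case.} Since $H$ is amenable, $\L_\sigma(H)$ and $\L_{\bar\sigma}(H)$ are injective, hence QWEP. Corollary~\ref{cor:scalar-fourier-qwep}, applied to $H$, then gives
\[
\mathsf M_{\cb}(L_p(\L_\sigma(H)))=\mathsf M_{\cb}(L_p(\L(H)))\quad\text{isometrically}.
\]
Combining this with Step 1 for both $\sigma$ and the trivial cocycle yields \eqref{eq:amenable-supported-symbols}, including the convention that both sides are $\infty$ simultaneously.

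\emph{Main obstacle.} The delicate point is the factorization in Step 1 at the $L_p$, completely bounded level. We need the conditional expectation $\E_H$ to be a complete contraction on $L_p$, and the inclusion $L_p(\L_\sigma(H))\subset L_p(\L_\sigma(G))$ to be a complete isometry. Both follow from $\E_H$ being trace-preserving and normal. The identity $T_\varphi=T^H_\varphi\E_H$ is first checked on finitely supported sums, where it is immediate from the support condition, and then extended by density, using boundedness of the right-hand side. No amenability of $G$ itself is needed, and the phases $\sigma(h,c)$ play no role because $T_\varphi$ annihilates the components with $c\notin H$.
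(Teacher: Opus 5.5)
Your proposal is correct and follows the same route as the paper: you reduce both completely bounded norms to $H$ through the inclusion $L_p(\L_\sigma(H))\subset L_p(\L_\sigma(G))$ and the trace-preserving expectation, and then apply Corollary~\ref{cor:scalar-fourier-qwep} to $H$, using injectivity of its twisted group algebras. Your version simply writes out the factorization $T_\varphi=\iota\circ T^H_{\varphi|_H}\circ\E_H$ at the $L_p$, completely bounded level, which the paper states in a single sentence.
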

\begin{proof}
Subgroup inclusions and expectations reduce both norms to $H$. Apply Corollary~\ref{cor:scalar-fourier-qwep}, since $\L_{\sigma|_H}(H)$ is injective.
\end{proof}

\begin{corollary}\label{cor:multiplier-obstruction}
If, for some $1<p<\infty$ and $\varphi\in\ell_\infty(G)$,
\[
\|\varphi\|_{\mathsf M_{\cb}(L_p(\L(G)))}
<\|\varphi\|_{\mathsf M_{\cb}(L_p(\R))},\qquad
\|\varphi\|_{\mathsf M_{\cb}(L_p(\L(G)))}<\infty,
\]
then $\R$ is not QWEP.
\end{corollary}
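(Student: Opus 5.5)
This is the contrapositive of Proposition~\ref{prop:fourier-qwep}, and I would argue by contradiction. Suppose $\R$ were QWEP. Since $1<p<\infty$, Proposition~\ref{prop:fourier-qwep} gives the contractive inclusion $\mathsf M_{\cb}(L_p(\L(G)))\subset\mathsf M_{\cb}(L_p(\R))$. The hypothesis $\|\varphi\|_{\mathsf M_{\cb}(L_p(\L(G)))}<\infty$ puts $\varphi$ in the domain space, so $T_\varphi$ extends completely boundedly to $L_p(\R)$. We then get
\[
\|\varphi\|_{\mathsf M_{\cb}(L_p(\R))}\le\|\varphi\|_{\mathsf M_{\cb}(L_p(\L(G)))}.
\]
This contradicts the assumed strict inequality, so $\R$ is not QWEP.

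The one point to verify is that contractivity of the inclusion really gives this norm comparison for symbols whose norm on $L_p(\R)$ might a priori be $\infty$. The finiteness hypothesis is exactly what handles this. The intertwining identity $\Phi T_\varphi=(\id\otimes\widetilde T_\varphi)\Phi$ holds on finite sums $\sum_{s\in F}a_s\lambda_\sigma(s)$, which are dense in $L_p(\R)$. The tensor extension $\id\otimes\widetilde T_\varphi$ is defined on $L_p(\R\barot\L(G))$ with cb-norm $\|\widetilde T_\varphi\|_{\cb}<\infty$, by the QWEP characterization from \cite{Junge04}, and $\Phi$ is a complete isometry on $L_p$. So $T_\varphi$ on finite sums is bounded by that quantity and extends by density. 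The extension has the same bound, which yields the displayed inequality.

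I expect no real obstacle. The only care needed is the convention that the multiplier norm is $\infty$ when no bounded extension exists; the finiteness assumption on $\|\varphi\|_{\mathsf M_{\cb}(L_p(\L(G)))}$ ensures that the contradiction is with a finite number rather than a vacuous comparison.
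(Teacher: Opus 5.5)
Your proposal is correct and is the same argument as the paper's, which simply applies Proposition~\ref{prop:fourier-qwep} in contrapositive form; your density check just unpacks the proof of that proposition. One minor remark: the finiteness hypothesis is in fact redundant, since the strict inequality already forces $\|\varphi\|_{\mathsf M_{\cb}(L_p(\L(G)))}<\infty$.
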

\begin{proof}
Apply Proposition~\ref{prop:fourier-qwep}.
\end{proof}

The next comparison extends \cite[Lemma~2.4 and Theorem~2.7\textup{(b)}]{NR11} and
\cite[Theorem~A]{Gonz} to twisted crossed products.

\begin{proposition}[{\cite[Theorem~4.1\textup{(ii)}, Theorem~4.2 and Corollary~4.3]{xxz26}}]\label{prop:fourier-schur}
Let $1<p<\infty$. Then
\[
\|\widetilde\varphi\|_{\mathsf S_{\cb}(S_p(\ell_2(G)))}
\le\|\widetilde\varphi\|_{\mathsf S_{\cb}(L_p(\N))}
\le\|\varphi\|_{\mathsf M_{\cb}(L_p(\R))}.
\]
If $\alpha$ is amenable, the second inequality is an equality.
If additionally $\M$ is QWEP, all three norms are equal.
\end{proposition}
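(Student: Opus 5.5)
We establish the chain of inequalities
\[
\|\widetilde\varphi\|_{\mathsf S_{\cb}(S_p(\ell_2(G)))}
\le\|\widetilde\varphi\|_{\mathsf S_{\cb}(L_p(\N))}
\le\|\varphi\|_{\mathsf M_{\cb}(L_p(\R))}
\]
directly, and then use amenability of $\alpha$ and QWEP of $\M$ for the reverse inequalities.

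\emph{First inequality.} The scalar copy $1\otimes B(\ell_2(G))\subset\N=\M\barot B(\ell_2(G))$ is the range of the trace-preserving conditional expectation $\tau\otimes\id$. The Schur multiplier $M_{\widetilde\varphi}$ on $L_p(\N)$ acts only on the matrix indices, so it commutes with this expectation. Its restriction to $S_p(\ell_2(G))$ is therefore the scalar Schur multiplier, and restricting a completely bounded map to a complemented subspace does not increase the cb-norm.

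\emph{Second inequality.} We use the standard twisted Fell absorption. Let $W$ be the unitary on $L_2(\M)\otimes\ell_2(G)$ used to define $\pi_\alpha$ and $\lambda_\sigma$. Conjugating the matrix-valued crossed product gives a trace-preserving normal $*$-homomorphism
\[
\Theta:\R\to\R\barot B(\ell_2(G)),\qquad
\Theta(a\lambda_\sigma(s))=\sum_t \alpha_{t^{-1}}(a)\,\sigma(\cdot)\lambda_\sigma(s)\otimes e_{t,s^{-1}t}.
\]
The phases are chosen so that $\Theta$ is multiplicative. It intertwines $T_\varphi$ with $\id_{\R}\otimes M_{\widetilde\varphi}$, since entry $(t,s^{-1}t)$ carries the symbol $\varphi(t(s^{-1}t)^{-1})=\varphi(s)$. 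This yields $\|T_\varphi\|_{\cb}\ge\ldots$, which is the wrong direction for the second inequality as stated. So the second inequality must come from the opposite transference: Schur multipliers on $L_p(\N)$ are dominated by Fourier multipliers on $\R$. For this we use the embedding $\N\hookrightarrow\R\barot B(\ell_2(G))$ given by
\[
a\otimes e_{s,t}\mapsto \lambda_\sigma(s)\,a'\,\lambda_\sigma(t)^*\otimes e_{s,t},
\]
where $a'=\alpha_{s^{-1}}(a)$ is suitably twisted. The map $\id\otimes T_\varphi$ acting on the first leg sends entry $(s,t)$ to $\varphi(st^{-1})$ times itself, so it realizes $M_{\widetilde\varphi}$. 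Because the image is a trace-preserving copy of $\N$ (up to the usual weight normalisation, handled by compressing to finite sets of indices $F$, taking cb-norms on $S_p^{|F|}[L_p(\M)]$, and passing to the supremum over $F$), the second inequality follows. This step works for all $\alpha$ without further hypotheses.

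\emph{Equality under amenability.} Amenability of $\alpha$ in the sense of \cite[Definition~2.15]{xxz26} provides finitely supported $\M$-valued positive-type functions $\xi_i$ with $\sum_t\xi_i(t)^*\xi_i(t)\to1$ and $\sum_t\xi_i(t)^*\alpha_s(\xi_i(s^{-1}t))\to1$ in the appropriate topology. The corresponding column maps
\[
J_i:L_p(\R)\to L_p(\N),\qquad x\mapsto\text{``}\xi_i\cdot x\cdot\xi_i^*\text{''},
\]
and their adjoint-type compressions $Q_i:L_p(\N)\to L_p(\R)$ are complete contractions. They satisfy $Q_iM_{\widetilde\varphi}J_i=T_\varphi\circ(\text{approximate identity})$, and the approximate identity converges pointwise to the identity, giving $\|T_\varphi\|_{\cb}\le\|M_{\widetilde\varphi}\|_{\cb}$. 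The main obstacle is showing that $J_i,Q_i$ are complete contractions on $L_p$ for $1<p<\infty$, and not merely on $L_1$ and $L_\infty$. I would do this by proving complete contractivity at $p=\infty$ through the Stinespring form $J_i(x)=V_i^*\Theta(x)V_i$, obtaining $p=1$ by duality, and concluding by complex interpolation, which is legitimate because the $J_i$ are compatible maps.

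\emph{Full equality.} Finally, if $\M$ is QWEP, then $\N$ is QWEP, and $\|\id_{L_p(\M)}\otimes M_{\widetilde\varphi}\|_{\cb}=\|M_{\widetilde\varphi}\|_{\cb}$ by Junge's tensor extension \cite[Theorem~3.11, Corollary~3.12]{Junge04}. This closes the first inequality and makes all three norms equal.
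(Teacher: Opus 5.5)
Your proposal follows the same route as the paper's argument behind the cited results: restriction to the copy $1\otimes S_p(\ell_2(G))$ for the first inequality, and the absorption identity $(T_\varphi\otimes\id)(D_FaD_F^*)=D_F(\id\otimes M_{\widetilde\varphi})(a)D_F^*$ with $D_F=\sum_{s\in F}\lambda_\sigma(s)\otimes e_{s,s}$ for the second. The reverse inequality uses the approximating maps supplied by amenability of $\alpha$ (as in \cite[Lemmas~3.1.14(ii) and~3.1.16]{Zeng23}), and Junge's tensor extension handles the case where $\M$ is QWEP.

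Two details need correcting. First, drop the twist $a'=\alpha_{s^{-1}}(a)$: in general it destroys multiplicativity and the $L_p$-isometry you invoke. The untwisted map $a\otimes e_{s,t}\mapsto\lambda_\sigma(s)a\lambda_\sigma(t)^*\otimes e_{s,t}$ is already $\Ad D_F$, a trace-preserving $*$-isomorphism onto its image, and its $(s,t)$ entry lies in $\M\lambda_\sigma(st^{-1})$. Second, in the amenable step, build the Stinespring-type maps from the regular-representation inclusion $\R\subset\N$, not from your $\Theta$: $\Theta$ takes values in $\R\barot B(\ell_2(G))$, not in $\N$, and cannot be trace-preserving there when $G$ is infinite.
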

\begin{proof}
%The first inequality follows by restriction. For finite $F\subset G$, put
%\[
%D_F=\sum_{s\in F}\lambda_\sigma(s)\otimes e_{s,s},\qquad
%a=\sum_{s,t\in F}a_{s,t}\otimes e_{s,t},\quad a_{s,t}\in\M.
%\]
%Since %$(D_FaD_F^*)_{s,t}\in\M\lambda_\sigma(st^{-1})$%,
%\[
%(T_\varphi\otimes\id)(D_FaD_F^*)
%=D_F(\id\otimes M_{\widetilde\varphi})(a)D_F^*.
%\]
%Taking $L_p$-norms and matrix amplifications proves the second inequality.

%If $\alpha$ is amenable, the completely contractive maps in
%\cite[Lemmas~3.1.14\textup{(ii)} and~3.1.16]{Zeng23} satisfy
%\[
%\Phi_{p,i}:L_p(\R)\longrightarrow L_p(\N),\qquad
%\Psi_{p,i}:L_p(\N)\longrightarrow L_p(\R),
%\]
%\[
%\Psi_{p,i}\Phi_{p,i}(x)\longrightarrow x,\qquad
%\Phi_{p,i}T_\varphi=(\id\otimes M_{\widetilde\varphi})\Phi_{p,i}.
%\]
%Hence $\|T_\varphi\|_{\cb}\le
%\|\id\otimes M_{\widetilde\varphi}\|_{\cb}$.
%If $\M$ is QWEP, tensor extension gives
%$\|\id\otimes M_{\widetilde\varphi}\|_{\cb}
%=\|M_{\widetilde\varphi}\|_{\cb}$.
See corresponding results in \cite{xxz26} and \cite{Zeng23}.
\end{proof}

The cited theorems are stated for $2<p<\infty$.
Duality gives $1<p<2$, and $p=2$ follows from the Fourier decomposition.
Thus QWEP of $\R$ gives group-to-crossed-product transference, whereas
QWEP of $\M$ gives Schur-to-Fourier transference under amenability of $\alpha$.
The examples of Section~\ref{sec:ordinary} show that QWEP of $\M$ and
$\L(G)$ does not imply the first hypothesis. On the other hand,
Theorem~\ref{thm:twistedregularwreath} and
Corollary~\ref{cor:scalar-fourier-qwep} give cocycle-independent multiplier
spaces for $A\wr\Gamma$ whenever the two restricted twisted algebras are QWEP.

\begin{problem}\label{separation}
Find a hyperlinear group $G$, $\sigma\in Z^2(G,\T)$,
$1<p<\infty$, $p\ne2$, and $\varphi\in\ell_\infty(G)$ such that
\[
\|\varphi\|_{\mathsf M_{\cb}(L_p(\L(G)))}
<\|\varphi\|_{\mathsf M_{\cb}(L_p(\L_\sigma(G)))},\qquad
\|\varphi\|_{\mathsf M_{\cb}(L_p(\L(G)))}<\infty.
\]
\end{problem}
Such a symbol gives a non-QWEP twist, and its support generates a nonamenable subgroup. Example~\ref{ex:fd-gap-qwep} shows that an exact representation gap alone gives no such separation.

\subsection{An obstruction to problem \ref{separation}}

\begin{proposition}%[Norm of the coefficient identification]
\label{prop:coefficientnorm}
Let $n\geq2$, $\zeta=e^{2\pi i/n}$, $G_n=(\ent/n\ent)^2$, and put $Ue_k=\zeta^ke_k$, $Ve_k=e_{k+1}$ on $\com^n$. The cocycle $\sigma_n((a,b),(c,d))=\zeta^{-bc}$ identifies $\L_{\sigma_n}(G_n)$ with $(\bM_n,\tr_n)$. With uniform probability measure on $G_n$, the coefficient map
\[
T_n(U^aV^b)(j,k)=\zeta^{aj+bk}
\]
satisfies, for $1\leq p\leq\infty$,
\[
\|T_n\|_{L_p(\bM_n)\to L_p(G_n)}
=\|T_n^{-1}\|_{L_p(G_n)\to L_p(\bM_n)}
=n^{|1/2-1/p|}.
\]
\end{proposition}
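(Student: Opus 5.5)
The plan is to prove the upper bounds at the endpoints $p=1,2,\infty$, interpolate, and then exhibit extremal elements. Throughout, $x=\sum_{a,b}c_{a,b}U^aV^b\in\bM_n$ and $f=T_nx$, so $f$ is the finite Fourier transform of the coefficient array $(c_{a,b})$ on $G_n$.

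\emph{Step 1: $p=2$.} The monomials $U^aV^b$ form an orthonormal basis of $L_2(\bM_n,\tr_n)$, because $\tr_n(U^aV^b)=\delta_{(a,b),0}$. The characters $(j,k)\mapsto\zeta^{aj+bk}$ form an orthonormal basis of $L_2(G_n)$ for the uniform probability. Hence
\[
\|T_nx\|_2=\|x\|_2=\bigl(\textstyle\sum|c_{a,b}|^2\bigr)^{1/2},
\]
so $T_n$ is a unitary in the $L_2$ sense.

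\emph{Step 2: the four endpoint bounds.}

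For $T_n^{-1}$ on $L_\infty$, combine Step~1 with the finite-dimensional comparison $\|x\|_\infty\le n^{1/2}\|x\|_{2}$, which holds for the normalized trace. This gives
\[
\|T_n^{-1}f\|_\infty\le n^{1/2}\|f\|_2\le n^{1/2}\|f\|_\infty.
\]

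For $T_n$ on $L_1$, write $\|x\|_2^2\le\|x\|_\infty\|x\|_1\le n\|x\|_1^2$. This gives
\[
\|T_nx\|_1\le\|T_nx\|_2=\|x\|_2\le n^{1/2}\|x\|_1.
\]

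For $T_n$ on $L_\infty$, view $x\mapsto T_nx(j,k)$ as a functional $x\mapsto\tr_n(xD_{j,k})$ with $D_{j,k}=\sum_{a,b}\zeta^{aj+bk}(U^aV^b)^*$. The required bound is then $\sup_{j,k}\tr_n|D_{j,k}|$.
\begin{itemize}
\item By the Weyl commutation relation, $D_{j,k}$ is a unitary conjugate of $D_{0,0}$, multiplied on one side by a unitary, so it suffices to treat $D_{0,0}$.
\item Compute $D_{0,0}=\bigl(\sum_bV^{-b}\bigr)\bigl(\sum_aU^{-a}\bigr)$. Here $\sum_aU^{-a}=n\,e_0e_0^*$ and $\sum_bV^{-b}e_0=\mathbf 1$, the all-ones vector.
\item Hence $D_{0,0}=n\,\mathbf 1e_0^*$ is rank one with singular value $n\sqrt n$, so $\tr_n|D_{0,0}|=n^{1/2}$.
\end{itemize}
This gives $\|T_n\|_{\infty\to\infty}\le n^{1/2}$.

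For $T_n^{-1}$ on $L_1$, use duality. With respect to the pairings $\tr_n(xy)$ and $\int_{G_n}fg$, the adjoint of $T_n$ coincides with $T_n^{-1}$ up to the isometric involutions $x\mapsto x^*$, complex conjugation, and the reflection $(a,b)\mapsto(-a,-b)$. This is the step where I would carefully check the phases $\zeta^{ab}$ coming from $(U^aV^b)^*=\zeta^{ab}V^{-b}U^{-a}$. Granting it, the $(1\to1)$ norm of $T_n^{-1}$ equals the $(\infty\to\infty)$ norm of $T_n$, and vice versa.

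\emph{Step 3: interpolation.} Complex interpolation of noncommutative $L_p$ spaces (Riesz--Thorin) between the pairs $(1,2)$ and $(2,\infty)$ gives
\[
\|T_n\|_{p\to p},\ \|T_n^{-1}\|_{p\to p}\le n^{|1/2-1/p|}.
\]

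\emph{Step 4: lower bounds.} I expect this to be the main obstacle, since interpolated upper bounds are not automatically sharp for every $p$. I would test $T_n$ on an element concentrated in a single Fourier coefficient profile, such as $x=V^{b}$-averages, and test $T_n^{-1}$ on a point mass $f=\delta_{(0,0)}$.
\begin{itemize}
\item For $f=\delta_{(0,0)}$, the inverse is $T_n^{-1}f=n^{-2}D$, where $D=D_{0,0}^*$ up to phase is the rank-one operator $n\,e_0\mathbf 1^*$.
\item Its $L_p$ norm is $n^{-2}\cdot n\sqrt n\cdot n^{-1/p}$, while $\|f\|_p=n^{-2/p}$.
\item The resulting ratio is $n^{1/p-1/2}$, which attains the bound $n^{|1/2-1/p|}$ for $p\le2$.
\item For $p\ge2$ the bound should be attained by the dual element $x=e_0\mathbf 1^*/\sqrt n$, which is a partial isometry. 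For this element $|T_nx|$ should equal $n^{1/2}$ on the set $\{j=0\}$ and vanish elsewhere, and this needs to be verified.
\end{itemize}
The corresponding statements for $T_n$ then follow from the same duality and inverse relation as in Step~2, taking the extremal pairs in the reverse direction.
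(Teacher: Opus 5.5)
\textbf{Upper bounds.} Your endpoint estimates at $p=1,2,\infty$, the computation $\tr_n|D_{0,0}|=n^{1/2}$, and the interpolation are all correct. For the $L_1$ bound on $T_n^{-1}$ you can skip the phase check: with sesquilinear pairings, $T_n^*=T_n^{-1}$ simply because $T_n$ is unitary on $L_2$. Your bilinear version is also right, namely $T_n^{t}g=(T_n^{-1}\bar g)^*$.

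\textbf{The gap is in Step~4.} All your lower bounds come from a single pair. Since $e_0\mathbf 1^*=D_{0,0}^*/n=n^{-1}\sum_{a,b}U^aV^b$, one has $T_n(e_0\mathbf 1^*)=n\,\delta_{(0,0)}$. Your first bullet reads this identity backwards, and your second bullet reads it forwards. Note that for $x=e_0\mathbf 1^*/\sqrt n$, the function $|T_nx|$ equals $n^{1/2}$ only at the single point $(0,0)$, not on $\{j=0\}$. Your predicted profile would give the ratio $n^{1/2}$, contradicting your own upper bound for $p<\infty$; the correct profile gives $n^{1/2-1/p}$. This pair yields $\|T_n\|_{p\to p}\ge n^{1/2-1/p}$ for $p\ge2$ and $\|T_n^{-1}\|_{p\to p}\ge n^{1/p-1/2}$ for $p\le2$. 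Those two bounds are exactly exchanged by the duality $\|T_n\|_{p\to p}=\|T_n^{-1}\|_{p'\to p'}$. Reading a pair backwards only inverts its ratio, and here the inverted ratio is at most $1$ precisely in the regimes still open. So neither duality nor ``reversing'' gives $\|T_n\|_{p\to p}\ge n^{1/p-1/2}$ for $p<2$, or its dual $\|T_n^{-1}\|_{p\to p}\ge n^{1/2-1/p}$ for $p>2$. (Your initial idea of $V^b$-averages is also not extremal: $x=n^{-1}\sum_bV^b$ gives ratio $1$.)

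\textbf{The missing pair.} You need a second pair in which the function is flat and the matrix is concentrated, and you already have it. We have $D_{0,0}=n\mathbf 1e_0^*$, and $(U^aV^b)^*=\zeta^{-ab}U^{-a}V^{-b}$. Hence $\mathbf 1e_0^*=n^{-1}\sum_{a,b}\zeta^{-ab}U^aV^b$, so $T_n(\mathbf 1e_0^*)(j,k)=\zeta^{jk}$. For $f(j,k)=\zeta^{jk}$ we get $\|f\|_p=1$. Also $\|T_n^{-1}f\|_p=\|\mathbf 1e_0^*\|_p=n^{1/2-1/p}$, since this is a rank-one operator with singular value $\sqrt n$. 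This gives $\|T_n^{-1}\|_{p\to p}\ge n^{1/2-1/p}$ for $p\ge2$, and, read forwards, $\|T_n\|_{p\to p}\ge n^{1/p-1/2}$ for $p\le2$. These are exactly the paper's two test elements: its $x$ with $xF=E_{00}$ is your $e_0\mathbf 1^*/\sqrt n$, and its $f(j,k)=\zeta^{jk}$ is the one you are missing.

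\textbf{Comparison with the paper.} Your route to the upper bounds is a valid alternative: endpoint norm comparisons plus the trace-class norm of the kernel $D_{j,k}$, interpolated on both sides of $2$. The paper instead uses the factorization $(T_nx)(j,k)=\sqrt n\,\zeta^{jk}(xF)_{jk}$ with the unitary Fourier matrix $F$. From it the paper reads off the $L_2$ and $L_\infty$ bounds for both maps, interpolates for $p\ge2$, and dualizes via $T_n^*=T_n^{-1}$ for $p\le2$.
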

\begin{proof}
For the unitary Fourier matrix $F_{lk}=n^{-1/2}\zeta^{-lk}$,
\[
(T_nx)(j,k)=\sqrt n\,\zeta^{jk}(xF)_{jk},\qquad
(T_n^{-1}f)F=n^{-1/2}[\zeta^{-jk}f(j,k)]_{j,k}.
\]
Both maps are $L_2$-isometries and have $L_\infty$-norm at most $\sqrt n$. Interpolation gives the upper bounds for $p\geq2$.
For $xF=E_{00}$ and $f(j,k)=\zeta^{jk}$,
\[
\frac{\|T_nx\|_p}{\|x\|_p}
=\frac{\|T_n^{-1}f\|_p}{\|f\|_p}=n^{1/2-1/p}.
\]
Since $T_n^*=T_n^{-1}$, duality gives $p\leq2$.
\end{proof}
We see that 
\[
\|T_n\|_{L_p(\bM_n)\to L_p(G_n)}
=\|T_n^{-1}\|_{L_p(G_n)\to L_p(\bM_n)}
=n^{|1/2-1/p|} \to \infty.
\]
However, for
\[
G=\bigoplus_{n\geq2}G_n,\qquad \sigma=\prod_{n\geq2}\sigma_n,
\]
$\L(G)$ is diffuse abelian and $\L_\sigma(G)\cong\barot_{n\geq2}(\bM_n,\tr_n)$ is hyperfinite, thus QWEP. Their completely bounded Fourier multiplier norms agree, but both coefficient identifications are unbounded on $L_p$ for $p\ne2$ by Proposition~\ref{prop:coefficientnorm}. Thus distortion of the Fourier multiplier norms do not provide the example we seek for problem \ref{separation}.

\section{Operator-valued cocycles}\label{sec:reverse}
\subsection{A free presentation}
The following elementary extension construction applies to \emph{any} finitely generated nonhyperlinear group, and in particular to the explicit torsion-free group \(E\).

\begin{theorem}[{Theorem \ref{thm:overview}, (iv)}]
\label{thm:reverse}
Let \(G\) be finitely generated and nonhyperlinear. Choose a surjection
\[
p:\FF_d\twoheadrightarrow G,\qquad K=\ker p,
\]
from a free group of finite rank, and a set-theoretic section \(s:G\to \FF_d\) with \(s(e)=e\). On \(\Pcal=\L(K)\), define
\begin{align}\label{eq:sectioncocycle}
\alpha_g(\lambda(k))&=\lambda(s(g)ks(g)^{-1}),\\
\sigma(g,h)&=\lambda(s(g)s(h)s(gh)^{-1}).\nonumber
\end{align}
Then \((\alpha,\sigma)\) is a trace-preserving cocycle action and
\begin{equation}\label{eq:freecrossed}
\Pcal\rtimes_{\alpha,\sigma}G\cong\L(\FF_d).
\end{equation}
Both \(\Pcal\) and this crossed product are QWEP, while \(\L(G)\) is not. No perturbation by unitaries in \(\Pcal\) can make the cocycle scalar-valued.
\end{theorem}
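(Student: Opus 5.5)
The plan is to treat this as the standard Mackey-type decomposition of a group extension, $1\to K\to\FF_d\to G\to1$, written through the section $s$.

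\emph{Cocycle relations.} Put $u_g=\lambda(s(g))\in\L(\FF_d)$. Then $\alpha_g=\Ad_{u_g}|_{\Pcal}$, and $\sigma(g,h)=u_gu_hu_{gh}^*$ lies in $\Pcal$ because $p(s(g)s(h)s(gh)^{-1})=e$. The relations $\alpha_g\alpha_h=\Ad_{\sigma(g,h)}\alpha_{gh}$ and the twisted cocycle identity
\[
\sigma(g,h)\sigma(gh,k)=\alpha_g(\sigma(h,k))\sigma(g,hk)
\]
then follow formally, since both sides of the second identity equal $u_gu_hu_ku_{ghk}^*$. Normalization follows from $s(e)=e$. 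Each $\alpha_g$ is induced by a group automorphism of $K$, so it preserves the trace.

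\emph{The isomorphism \eqref{eq:freecrossed}.} Inside $\L(\FF_d)$, the algebra $\Pcal$ and the unitaries $u_g$ satisfy the covariance relations of the definition. Every $f\in\FF_d$ factors uniquely as $f=k\,s(p(f))$ with $k\in K$. Hence the elements $x u_g$, with $x\in\Pcal$ and $g\in G$, span a weakly dense $*$-subalgebra. The trace satisfies $\tau(xu_g)=0$ for $g\ne e$, since $K s(g)$ misses $e$, and $\tau(xu_e)=\tau(x)$. So the pair $(\Pcal,u)$ has the same moments as $(\Pcal,\lambda_\sigma)$ in the crossed product with its canonical trace $\widehat\tau$. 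The map $x\lambda_\sigma(g)\mapsto xu_g$ is therefore trace-preserving on a dense $*$-algebra, and it extends to a normal isomorphism by the GNS construction for faithful traces. Equivalently, $\ell_2(\FF_d)\cong\ell_2(G,\ell_2(K))$ via the coset decomposition, and this unitary intertwines the regular representations.

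\emph{QWEP assertions.} $K$ is a subgroup of a free group, hence free and residually finite. Thus $\Pcal=\L(K)$ is QWEP by \eqref{eq:bridge}, and so is $\L(\FF_d)$. $\L(G)$ is not QWEP by \eqref{eq:bridge}, since $G$ is nonhyperlinear.

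\emph{Non-removability of the cocycle.} This is the main point. Suppose $b_g\in\cU(\Pcal)$ made $v_g=b_g\lambda_\sigma(g)$ satisfy $v_gv_h=c(g,h)v_{gh}$ with $c$ scalar. Then $\Ad_{v_g}=\Ad_{b_g}\alpha_g$ would be an action $\beta$ of $G$ on $\Pcal$, and the crossed product would become the scalar-twisted crossed product $\Pcal\rtimes_{\beta,c}G$. The same spanning and trace computation shows that the perturbed $v_g$ still give the canonical crossed-product structure with the same trace. Since $\beta$ is trace-preserving, Theorem~\ref{thm:scalarfactorization} applies: QWEP of $\Pcal\rtimes_{\beta,c}G\cong\L(\FF_d)$ forces $\Pcal\rtimes_\beta G$ and $\L_c(G)$ to be QWEP. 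Lemma~\ref{lem:scalar} then gives that $\L(G)$ is QWEP, a contradiction. More directly, the expectation $\E_\tau$ used in Theorem~\ref{thm:scalarfactorization} exhibits $\L_c(G)$ as an expected subalgebra of $\L(\FF_d)$.

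The step I expect to need the most care is checking that the perturbed implementers generate the crossed product with its canonical trace. This is handled by noting $\tau(xv_g)=\tau(xb_g u_g)=0$ for $g\neq e$, exactly as before.
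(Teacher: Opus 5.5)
Your proposal is correct and follows the paper's route. The cocycle identities come from the section $s$, where your formulation via $u_g=\lambda(s(g))$ is the paper's identity for $c(g,h)\in K$. The isomorphism with $\L(\FF_d)$ comes from the coset decomposition $\FF_d=\bigsqcup_g Ks(g)$ and trace preservation. Non-removability comes from the observation that a scalar perturbation would give a trace-preserving scalar-twisted crossed product, forcing $\L(G)$ to be QWEP via Lemma~\ref{lem:scalar}. Your explicit check that the perturbed implementers $b_g\lambda_\sigma(g)$ still generate the crossed product with its canonical trace fills in a step the paper leaves implicit.
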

\begin{proof}
Put $c(g,h)=s(g)s(h)s(gh)^{-1}\in K$. Then
\[
c(g,h)c(gh,k)=s(g)c(h,k)s(g)^{-1}c(g,hk),
\]
which gives the cocycle identities in \eqref{eq:sectioncocycle}.
The map
\[
\sum_g x_g\lambda_\sigma(g)\longmapsto\sum_g x_g\lambda(s(g))
\]
is multiplicative and trace-preserving. Since
$\FF_d=\bigsqcup_{g\in G}Ks(g)$, its image generates $\L(\FF_d)$.
It therefore extends to \eqref{eq:freecrossed}.

The groups $K$ and $\FF_d$ are free, hence residually finite. Thus $\Pcal$
and $\L(\FF_d)$ are QWEP, whereas $\L(G)$ is not.
If $b_g\in\cU(\Pcal)$ made the new cocycle
\[
\sigma'(g,h)=b_g\alpha_g(b_h)\sigma(g,h)b_{gh}^*
\]
scalar-valued, the new action $\Ad_{b_g}\alpha_g$ would still preserve
the trace. Lemma~\ref{lem:scalar} would imply QWEP of $\L(G)$,
a contradiction.
\end{proof}

Taking $G=E$ proves Theorem~\ref{thm:overview}\textup{(iv)}.

\subsection{The translation action}\label{sec:translation}
Let \(G=E\) and let \(\beta_gf(h)=f(g^{-1}h)\) on \(\ell_\infty(G)\).

\begin{proposition}\label{prop:translation}
For the translation action,
\[
\ell_\infty(G)\rtimes_\beta G\cong B(\ell_2(G)).
\]
Hence this ordinary crossed product is QWEP while \(\L(G)\) is non-QWEP.
\end{proposition}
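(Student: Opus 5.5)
The plan is to exhibit an explicit unitary on $\ell_2(G)\otimes\ell_2(G)$ that carries the crossed product onto $\ell_\infty(G)\barot B(\ell_2(G))$-type data. Concretely, this should identify it with $B(\ell_2(G))\barot\L(G)'$ restricted appropriately. The cleanest route is the standard Takai/Stone--von Neumann argument for the translation action.

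First, realize the crossed product in its regular representation on $\ell_2(G,\ell_2(G))=\ell_2(G\times G)$. There the operators are
\[
(\pi_\beta(f)\xi)(t,h)=f(th)\xi(t,h),
\qquad
(\lambda(s)\xi)(t,h)=\xi(s^{-1}t,h).
\]
Next, conjugate by the unitary
\[
(W\xi)(t,h)=\xi(t,t^{-1}h),
\]
which rewrites $\pi_\beta(f)$ as $1\otimes M_f$, with $f$ acting on the second variable. The same conjugation rewrites $\lambda(s)$ as $\lambda(s)\otimes\lambda(s)$. Up to checking the exact form of these formulas, the crossed product is therefore generated by $\com1\otimes\ell_\infty(G)$ and $\{\lambda(s)\otimes\lambda(s)\}$.

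The key step is to show that $\ell_\infty(G)$ and $\lambda(G)$ together generate $B(\ell_2(G))$. Their commutant is trivial: an operator commuting with all multiplication operators is itself a multiplication operator, and a translation-invariant function is constant. So they generate $B(\ell_2(G))$ by the bicommutant theorem.

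It remains to remove the diagonal factor. Here the guess is that the map
\[
x\mapsto(\lambda(s)\otimes 1)(\ldots)
\]
works. More precisely, the covariant pair $(M_f,\lambda(s))$ on $\ell_2(G)$ gives a normal representation of the crossed product. Combined with the dual description, the pair $(1\otimes M_f,\lambda(s)\otimes\lambda(s))$ is unitarily equivalent to $(M_f\otimes1,\lambda(s)\otimes1)$ tensored with the identity on $\ell_2(G)$, by Fell absorption applied in the right variable. Fell absorption gives a unitary $V$ with
\[
V(\lambda(s)\otimes\lambda(s))V^*=\lambda(s)\otimes1.
\]
The catch is that $V$ must also fix, or suitably transform, the multiplication operators. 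So I will instead choose the unitary directly: take
\[
(U\xi)(t,h)=\xi(t,h)
\]
but first apply the change of variables $(t,h)\mapsto(th,h)$, which makes $\pi_\beta(f)$ equal to $M_f\otimes1$ and $\lambda(s)$ equal to $\lambda(s)\otimes1$. The crossed product is then $B(\ell_2(G))\otimes\com1\cong B(\ell_2(G))$. This verification is the main technical point, though it is a routine coordinate computation; I would double-check it against the convention $(\pi_\alpha(a)\xi)(t)=\alpha_{t^{-1}}(a)\xi(t)$.

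For the QWEP claims, $B(\ell_2(G))$ is injective and hence QWEP. $\L(G)$ is non-QWEP by Corollary~\ref{cor:nonqwep} since $G=E$.

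Finally, I would note the intended contrast with Theorem~\ref{thm:scalarfactorization}. The counting measure gives an invariant semifinite trace on $\ell_\infty(G)$ but no invariant normal state, and no normal conditional expectation onto $\L(G)$ exists. So the conclusion does not contradict that theorem.
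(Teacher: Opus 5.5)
Your proof is correct, but it takes a spatial route where the paper's is intrinsic. The paper never computes the regular representation. It forms $e_{s,t}=p_s\lambda(st^{-1})p_t$ with $p_s=1_{\{s\}}$, notes from covariance that these are matrix units with $\sum_s e_{s,s}=1$, and uses $p_e(\ell_\infty(G)\rtimes_\beta G)p_e=\com p_e$ to conclude that the crossed product is $B(\ell_2(G))$. You instead conjugate the regular representation on $\ell_2(G\times G)$ onto $B(\ell_2(G))\otimes\com1$. The only identification you then need is $(\ell_\infty(G)\cup\lambda(G))''=B(\ell_2(G))$, which your bicommutant argument gives.

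Your write-up should be trimmed to the one unitary that does the job, namely $(U\xi)(t,h)=\xi(th^{-1},h)$. For this $U$ one has $U\pi_\beta(f)U^*=M_f\otimes1$ and $U\lambda(s)U^*=\lambda(s)\otimes1$. The $W$-conjugation and the Fell-absorption detour can then be deleted. Replace the phrase ``$(U\xi)(t,h)=\xi(t,h)$ after the change of variables $(t,h)\mapsto(th,h)$'' by this formula: the literal operator $\xi\mapsto\xi(th,h)$ conjugates $\pi_\beta(f)$ to multiplication by $f(th^2)$, and it is its adjoint that works.

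What your route buys is a concrete picture. The regular representation is the amplification $x\mapsto x\otimes1$ of the standard covariant pair $(M_f,\lambda(s))$ on $\ell_2(G)$, and under it the paper's $e_{s,t}$ become the ordinary matrix units of $B(\ell_2(G))$. The paper's route uses only the covariance relation and the Fourier expansion inside the crossed product, with no coordinate computation.

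In your closing remark, QWEP permanence excludes every conditional expectation onto $\L(G)$, not only normal ones.
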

\begin{proof}
For $p_s=1_{\{s\}}$, the elements
\[
e_{s,t}=p_s\lambda(st^{-1})p_t
\]
are matrix units with $\sum_s e_{s,s}=1$ strongly and
$p_e(\ell_\infty(G)\rtimes G)p_e=\com p_e$.
Hence the crossed product is $B(\ell_2(G))$, which is injective.
\end{proof}

The action preserves the semifinite counting trace. Since \(G\) is nonamenable, it admits no invariant state on \(\ell_\infty(G)\). The inclusion \(\L(G)\subset B(\ell_2(G))\) is not expected, by QWEP permanence.

\subsection{A measured-groupoid construction}\label{sec:groupoid}
The same extension construction applies to measured groupoids. We use a probability-measure-preserving equivalence relation with non-QWEP von Neumann algebra from \cite[Theorem~1.1]{Manzoor26}.

\subsubsection{The relation algebra}
By \cite[Theorem 1.2]{Manzoor26}, choose a non-co-hyperlinear invariant random subgroup \(\nu\) of a finite-rank nonabelian free group \(\FF_r\). Thus the GNS algebra of the character
\[
\chi_\nu(g)=\nu\{H:g\in H\}
\]
is not Connes embeddable by \cite[Proposition~2.17\textup{(1)}]{Manzoor26}. Realize \(\nu\) as the stabilizer distribution of a probability-measure-preserving action \(\FF_r\curvearrowright(X,\mu)\), using \cite[Theorem 2.4]{Manzoor26}. Let \(\mathcal R\) be its orbit equivalence relation and set
\[
\A_X=L_\infty(X,\mu),\qquad \Pcal_X=\A_X\rtimes \FF_r.
\]
The character algebra embeds trace-preservingly into \(\L(\mathcal R)\), by \cite[Proposition 2.11 and Definition 2.12]{Manzoor26}. Hence \(\L(\mathcal R)\) is not Connes embeddable, and is non-QWEP.

\begin{lemma}\label{lem:freeaction}
For every probability-measure-preserving action of a finite-rank free group on a standard probability space, \(L_\infty(X)\rtimes \FF_r\) is QWEP. Freeness of the action is not required.
\end{lemma}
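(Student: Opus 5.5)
We have to show that $L_\infty(X)\rtimes\FF_r$ is QWEP for every probability-measure-preserving action of $\FF_r$. The plan is to check Connes embeddability, which for finite tracial algebras is equivalent to QWEP, by approximating the generators with finite matrix models, exactly as in the proof of Theorem~\ref{thm:twistedregularwreath}. Write $\A_X=L_\infty(X,\mu)$ and let $u_1,\dots,u_r$ be the canonical unitaries implementing the free generators. The trace on the crossed product is determined by the moments
\[
\widehat\tau\bigl(f_0u_{w_1}f_1\cdots u_{w_k}f_k\bigr)
=\int_X\Bigl(\prod_j f_j\circ(\text{partial word})\Bigr)\,1_{\{x:\,w x=x\}}\,d\mu .
\]
Here $w=w_1\cdots w_k$, and the indicator of the fixed-point set of $w$ appears because $\widehat\tau(f\lambda(g))=\int_{\mathrm{Fix}(g)}f\,d\mu$ in $L(\mathcal R)$-type formulas. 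This is the correct trace when the action is not free: the crossed product trace gives $\tau(f)\delta_{g,e}$, and I should use that trace, not the relation trace. Non-freeness only changes which algebra we are computing in, so it does not affect the argument.

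\emph{Step 1 (reduction to a finite dynamical model).} Fix finitely many words in the $u_i^{\pm1}$ and finitely many functions from a generating countable family in $\A_X$. Approximate each $f_j$ in $L_2$ by a simple function measurable with respect to a finite partition $\mathcal P$ of $X$. Since $\A_X$ is abelian, I would carry out the approximation on $\FF_r$ directly rather than pass through the ultraproduct.

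\emph{Step 2 (finite models).} This is the cleanest route. Each automorphism $\theta_i$ of $(X,\mu)$ can be approximated, in the weak topology on $\mathrm{Aut}(X,\mu)$ and uniformly on the finitely many sets generated by $\mathcal P$ under words of bounded length, by a permutation $\pi_i$ of a dyadic partition of $X$ into $N$ atoms of equal measure. This is Halmos's weak approximation by periodic or permutation automorphisms. Because $\FF_r$ is free, any choice of permutations $\pi_1,\dots,\pi_r\in S_N$ defines an action of $\FF_r$ on $\{1,\dots,N\}$; no relations need to be checked, and this is where freeness of the acting group is used. Set
\[
\mathcal B_N=\ell_\infty(N)\rtimes_{\pi}\FF_r\subset \ell_\infty(N)\barot B(\ell_2\FF_r).
\]
We have $\mathcal B_N\cong\L(\text{finite groupoid-like algebra})$. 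More usefully, it embeds in an expected way into $\bM_N\barot\L(\FF_r)$ via $e_k\lambda(g)\mapsto e_{k,k}\,\pi(g)\otimes\lambda(g)$, with the trace being the crossed-product trace. This is the same diagonal trick as in Proposition~\ref{prop:diagonal}. Hence $\mathcal B_N$ is Connes embeddable, because $\L(\FF_r)$ is.

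\emph{Step 3 (moment convergence and conclusion).} Map $f\mapsto \mathbb E_{\mathcal P_N}f$ and $u_i\mapsto\lambda(\text{gen}_i)$ in $\mathcal B_N$. We must show that the finitely many chosen moments converge as $N\to\infty$. The main obstacle is the fixed-point indicator: weak approximation controls $\mu(\theta_w A\cap B)$ but not $\mu(\mathrm{Fix}(w))$, which is discontinuous in the weak topology. What I would try first is to use the crossed-product trace rather than the fixed-point version. In $\A_X\rtimes\FF_r$ the trace of $f\lambda(g)$ is $\tau(f)\delta_{g,e}$, with no fixed-point term, and likewise in $\mathcal B_N$ the trace of $e_k\lambda(g)$ vanishes for $g\neq e$ because $\lambda(g)$ carries the $\L(\FF_r)$ tensor factor. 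So a moment reduces, after commuting the functions through, to $\delta_{w,e}\int\prod_j f_j\circ\theta_{w_{\le j}}\,d\mu$. These are exactly correlations of sets under the approximate action, and they converge by Step 2. An ultraproduct over $N$ then gives a trace-preserving embedding of $\A_X\rtimes\FF_r$ into $\prod_\omega\mathcal B_N$, which embeds in $\R_0^\omega$. Connes embeddability, hence QWEP, follows, and freeness of the action was never used.
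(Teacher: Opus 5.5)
Your argument is correct, but it follows a different route from the paper's.

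\textbf{The paper's route.} For a free basis $a_1,\dots,a_r$, the paper writes $\A_X\rtimes\FF_r$ as the amalgamated free product $*_{\A_X}(\A_X\rtimes\langle a_i\rangle)$ over $\A_X$. This holds because alternating centred Fourier polynomials carry reduced nontrivial group labels, so their expectation onto $\A_X$ vanishes whether or not the action is free. Each factor is injective. The Brown--Dykema--Jung theorem on Connes embeddability of amalgamated free products over hyperfinite algebras, followed by Ozawa's equivalence, finishes the proof.

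\textbf{Your route.} You build finite permutation models instead. You combine three facts: Halmos's weak density of dyadic permutations in $\mathrm{Aut}(X,\mu)$, the absence of relations in $\FF_r$, and the fact that the weak topology is a group topology (so approximating the generators approximates every word). This gives actions $\FF_r\curvearrowright\{1,\dots,N\}$ whose crossed products embed trace-preservingly in $\bM_N\barot\L(\FF_r)$. The moments converge because the crossed-product trace contributes only $\delta_{w,e}$ times a correlation integral, and that integral is continuous in the weak topology (together with $L_2$-convergence of the conditional expectations onto the dyadic partitions).

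\textbf{What each buys.} The paper's proof is two lines, but it rests on a deep external theorem. In exchange it applies verbatim to pmp actions of any free product of countable amenable groups. Your proof is elementary and self-contained: it needs only Halmos's theorem and residual finiteness of $\FF_r$. It also shows concretely that the crossed product is approximated by crossed products of finite actions.

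\textbf{Two small repairs.}
\begin{enumerate}[label=\textup{(\alph*)},leftmargin=2em]
\item Delete your first moment display. The fixed-point indicator belongs to the trace on $\L(\mathcal R)$, not on $\A_X\rtimes\FF_r$, so the ``main obstacle'' you discuss never arises.
\item Partitions into $N$ atoms of equal measure require $X$ to be nonatomic. The general case follows by replacing $X$ with $X\times[0,1]$, with trivial action on the second factor. Indeed, $(L_\infty(X)\rtimes\FF_r)\barot L_\infty[0,1]$ contains $L_\infty(X)\rtimes\FF_r$ with a trace-preserving expectation.
\end{enumerate}
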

\begin{proof}
For a free basis $a_1,\ldots,a_r$,
\[
\Pcal_X\cong *_{\A_X,\,i=1}^{r}\bigl(\A_X\rtimes\langle a_i\rangle\bigr).
\]
Indeed, alternating centered Fourier polynomials have no identity
coefficient. Each factor is an amenable crossed product of an abelian
algebra and is QWEP by \cite[Proposition~4.1\textup{(vi)}]{Ozawa04}.
Since $\A_X$ is hyperfinite, \cite[Corollary~4.5]{BDJ08} gives Connes
embeddability of $\Pcal_X$. Apply \cite[Corollary~6.2]{Ozawa04}.
\end{proof}

\subsubsection{The stabilizer bundle}
Let \(H_x=\{g\in \FF_r:gx=x\}\) and define
\[
\mathcal B=\int_X^\oplus \mathcal B_x\,d\mu(x),\qquad \mathcal B_x=\L(H_x).
\]
This is the group von Neumann algebra of the isotropy group bundle. Inside \(\Pcal_X\), it is \(\A_X'\cap \Pcal_X\): a Fourier coefficient at \(g\) commutes with all functions precisely when it is supported on \(\operatorname{Fix}(g)\). Since \(\Pcal_X\) is finite and QWEP, the trace-preserving expectation onto \(\mathcal B\) shows that \(\mathcal B\) is QWEP. The diagonal algebra $\A_X$ is central in $\mathcal B$.

Enumerate \(\FF_r\) and choose measurably, for each \((x,y)\in\mathcal R\), an element \(t(x,y)\in \FF_r\) with \(t(x,y)y=x\), taking \(t(x,x)=e\). Write \(t_{xy}=t(x,y)\). Conjugation gives trace-preserving fiber isomorphisms
\[
\alpha_{xy}:\mathcal B_y\longrightarrow \mathcal B_x,\qquad
\alpha_{xy}(\lambda(h))=\lambda(t_{xy}ht_{xy}^{-1}).
\]
For \((x,y),(y,z)\in\mathcal R\), put
\[
\Omega(x,y,z)=\lambda(t_{xy}t_{yz}t_{xz}^{-1})\in\cU(\mathcal B_x).
\]
The element in the subscript fixes \(x\). Direct multiplication verifies
\begin{align*}
\alpha_{xy}\alpha_{yz}&=\Ad_{\Omega(x,y,z)}\alpha_{xz},\\
\Omega(x,y,z)\Omega(x,z,w)
&=\alpha_{xy}\bigl(\Omega(y,z,w)\bigr)\Omega(x,y,w).
\end{align*}
These are the normalized cocycle-action identities for the measured groupoid \(\mathcal R\). Its crossed product uses the fiber traces and the invariant probability measure \(\mu\); see \cite[Definition~2.2 and the subsequent crossed-product construction]{PSV20} for this framework.

\begin{theorem}\label{thm:groupoid}
There is a trace-preserving isomorphism
\[
\mathcal B\rtimes_{\alpha,\Omega}\mathcal R\cong \Pcal_X.
\]
Consequently \(\mathcal B\) and the twisted crossed product are QWEP, whereas \(\L(\mathcal R)\) is non-QWEP. This cocycle cannot be removed by a measurable field of coefficient unitaries. The same obstruction persists after tensoring each fiber with any fixed finite tracial QWEP algebra with separable predual.
\end{theorem}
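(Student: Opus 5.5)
The plan is to mimic the proof of Theorem~\ref{thm:reverse}, with the free group replaced by the group bundle. Each element of the crossed product $\mathcal B\rtimes_{\alpha,\Omega}\mathcal R$ is built from two ingredients: a fiberwise element of $\mathcal B_x=\L(H_x)$, and a partial isometry supported on the graph of a measurable partial bijection $\phi$ of $X$ whose graph lies in $\mathcal R$. Write $v_\phi$ for such a canonical partial isometry.

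\emph{The homomorphism.} Define $\Theta$ on these generators as follows:
\begin{itemize}
\item $\Theta$ is the identity on $\mathcal B\subset\Pcal_X$;
\item $\Theta(v_\phi)=\sum_{g\in\FF_r} 1_{\{x:\,t(x,\phi^{-1}x)=g\}}\,u_g\,1_{\{y:\,\phi y=gy\}}$, which is the analogue of $\lambda(s(g))$.
\end{itemize}
Here $t$ plays the role of the section $s$, and $\Omega(x,y,z)=\lambda(t_{xy}t_{yz}t_{xz}^{-1})$ plays the role of $c(g,h)$. Three relations must then be checked, and each is a fiberwise identity in $\FF_r$:
\begin{itemize}
\item The covariance relation $\Theta(v_\phi)\,b\,\Theta(v_\phi)^*=\alpha(b)$ holds because conjugation by $u_{t_{xy}}$ sends $\lambda(h)\in\L(H_y)$ to $\lambda(t_{xy}ht_{xy}^{-1})$.
\item The twisted multiplication relation $\Theta(v_\phi)\Theta(v_\psi)=\Omega\,\Theta(v_{\phi\psi})$ holds because $t_{xy}t_{yz}=\Omega(x,y,z)\,t_{xz}$ pointwise.
\item Trace preservation holds because the crossed-product trace keeps only the diagonal part $x=y$ with fiber trace. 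In $\Pcal_X$, the trace of $a\,u_g$ is $\int_{\operatorname{Fix}(g)}a\,d\mu$. So both sides pick out exactly the isotropy coefficients, weighted by $\mu$.
\end{itemize}
By trace preservation, $\Theta$ extends to a normal injective trace-preserving $*$-homomorphism.

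\emph{Surjectivity.} Any $a\,u_g$ with $a\in\A_X$ splits according to whether the group element used at each point agrees with $t(gy,y)$. On the set where it agrees, $a\,u_g$ lies in the image of the $v_\phi$ for $\phi=g|$. Elsewhere, $u_g=u_{t}\cdot u_{t^{-1}g}$ with $t^{-1}g\in H_y$, and $u_{t^{-1}g}$ restricted to that set lies in $\mathcal B$. Since $\FF_r$ is countable, measurable partitions handle this. So the image contains $\A_X$ and all $u_g$, hence equals $\Pcal_X$.

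\emph{QWEP conclusions.} $\Pcal_X$ is QWEP by Lemma~\ref{lem:freeaction}, and $\mathcal B$ is QWEP as already shown. Hence the crossed product is QWEP. $\L(\mathcal R)$ is non-QWEP as recalled above.

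\emph{Non-removability.} Suppose a measurable field $b_{xy}\in\cU(\mathcal B_x)$ made $\Omega$ scalar-valued, i.e.\ valued in $\T$ fiberwise. The action would still preserve the fiber traces. An analogue of the expectation in Lemma~\ref{lem:scalar} and Theorem~\ref{thm:scalarfactorization} then applies: apply the fiber traces $\tau_x$, which is possible because the twisted action is trace-preserving. This gives a normal conditional expectation onto the scalar-twisted relation algebra $\L_{\omega}(\mathcal R)$, where $\omega$ is a $\T$-valued $2$-cocycle. Next use the diagonal embedding of Proposition~\ref{prop:diagonal}, with $\L(\mathcal R)\hookrightarrow\L_\omega(\mathcal R)\barot\L_{\bar\omega}(\mathcal R)$ and its expectation onto the diagonal; this diagonal is now the diagonal groupoid $\mathcal R\subset\mathcal R\times\mathcal R$. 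The conclusion would be that $\L(\mathcal R)$ is QWEP, a contradiction.

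\emph{Tensored version.} Replace $\mathcal B_x$ by $\mathcal B_x\barot\mathcal C$ and extend $\alpha$ by the identity on $\mathcal C$. The crossed product becomes $\Pcal_X\barot\mathcal C$, which is QWEP. The same trace-slicing argument applies and again yields a contradiction.

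The main obstacle I expect is the groupoid version of the diagonal-embedding and expectation step, i.e.\ Proposition~\ref{prop:diagonal} for measured equivalence relations. One must verify that the diagonal groupoid embedding is trace-preserving and expected in the framework of \cite{PSV20}. Measurability bookkeeping for $\Theta$ on partial bijections is routine by comparison.
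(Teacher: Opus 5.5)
Your construction of the isomorphism is the paper's argument written on generators. The paper splits each $g\in\FF_r$ with $gy=x$ as $h_xt_{xy}$ with $h_x\in H_x$. Your surjectivity step, writing $u_g1_S=(u_k1_S)(1_Su_{k^{-1}g})$ on the piece $S$ where $t(gy,y)=k$, is the same splitting with the isotropy factor moved to the source.

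The trace check, however, rests on a wrong formula. $\Pcal_X=\A_X\rtimes\FF_r$ carries the crossed-product trace $\tau(au_g)=\delta_{g,e}\int_Xa\,d\mu$; the formula $\int_{\operatorname{Fix}(g)}a\,d\mu$ is the trace of $\L(\mathcal R)$. With your formula $\Theta$ would not be trace-preserving. For example, take $1_Su_h\in\mathcal B$ with $h\neq e$ and $S\subset\operatorname{Fix}(h)$ non-null: its trace in the twisted crossed product is $\int_S\tau_x(\lambda(h))\,d\mu=0$, while your formula gives $\mu(S)$.

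The correct statement is that both sides retain only the identity coefficient. First, $\tau_x$ kills $\lambda(h)$ for $h\in H_x\setminus\{e\}$. Second, since $t_{xx}=e$, for $b=\sum_hb_hu_h\in\mathcal B$ the identity coefficient of $b\,\Theta(v_\phi)$ is $b_e1_{\operatorname{Fix}(\phi)}$. Hence $\tau(b\,\Theta(v_\phi))=\int_{\operatorname{Fix}(\phi)}\tau_x(b_x)\,d\mu$, which is what the paper means by preserving the identity-arrow trace.

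The genuine gap is in the non-removability step, which you set up in a stronger form than the statement. You perturb $\Omega$ to a $\T$-valued cocycle $\omega$ on $\mathcal R$, and then need $\L_\omega(\mathcal R)$ QWEP $\Rightarrow$ $\L(\mathcal R)$ QWEP. The groupoid version of Proposition~\ref{prop:diagonal} that you invoke does not exist in that form, for two reasons.

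\begin{itemize}
\item The diagonal subgroupoid of $\mathcal R\times\mathcal R$ lives over the diagonal of $X\times X$, which is $\mu\times\mu$-null for diffuse $\mu$.
\item The natural map $f\mapsto f\otimes1$, $v_\phi\mapsto v^\omega_\phi\otimes v^{\bar\omega}_\phi$ is neither multiplicative nor trace-preserving. Write $v^\omega_\phi v^\omega_\psi=\omega_{\phi,\psi}v^\omega_{\phi\psi}$ with $\omega_{\phi,\psi}\in\cU(\A_X)$. The two tensor legs evaluate the cocycle at independent points, producing the function $(x,x')\mapsto\omega_{\phi,\psi}(x)\overline{\omega_{\phi,\psi}(x')}$ instead of $1$. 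Also, the trace $\mu(\operatorname{Fix}\phi)$ is sent to $\mu(\operatorname{Fix}\phi)^2$.
\end{itemize}

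Seen from the crossed product, a $\T$-valued cocycle on $\mathcal R$ is $\cU(\A_X)$-valued rather than constant. So the cancellation $\sigma\bar\sigma=1$ behind Proposition~\ref{prop:diagonal} is unavailable, and nothing in the paper supplies this step.

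The step is also unnecessary. The statement only says that $\Omega$ cannot be removed, i.e.\ made trivial. In that case your own fiber-trace slice with $\omega=1$ already gives a trace-preserving expectation from the perturbed crossed product (still isomorphic to $\Pcal_X$) onto $\L(\mathcal R)$, contradicting QWEP of $\Pcal_X$. That is exactly the paper's argument. It handles the tensored version with $\Pcal_X\barot\mathcal C$ in the same way, so you should drop the diagonal step there as well.
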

\begin{proof}
Every arrow $g:y\to x$ has the unique decomposition
\[
g=h_xt_{xy},\qquad h_x=gt_{xy}^{-1}\in H_x.
\]
For composable arrows,
\[
(h_xt_{xy})(k_yt_{yz})
=h_x\bigl(t_{xy}k_yt_{xy}^{-1}\bigr)\bigl(t_{xy}t_{yz}t_{xz}^{-1}\bigr)t_{xz}.
\]
Thus the decomposition preserves convolution, involution and the
identity-arrow trace. Passing to the regular representations gives
$\mathcal B\rtimes_{\alpha,\Omega}\mathcal R\cong \Pcal_X$.

If coefficient unitaries removed $\Omega$, the perturbed action would
preserve the fiber traces. The untwisted crossed product would contain
$\L(\mathcal R)$ with trace-preserving expectation, contradicting
QWEP of $\Pcal_X$. After tensoring by $\mathcal D$, the same argument applies to
$\Pcal_X\barot \mathcal D$ and the product fiber traces.
\end{proof}

\medskip

\section{Statement on the use of generative AI}
During the development and preparation of this manuscript, the author used AI tools, including ChatGPT, to assist with language polishing, LaTeX editing, and exploratory mathematical discussions. Interactions with GPT-6 Astra also contributed some ideas to the proof-development process. The author critically assessed and reformulated these suggestions before incorporating them into the manuscript and independently verified all mathematical arguments presented. The author takes full responsibility for the mathematical content and accuracy of the final manuscript.

\section{Acknowledgements}
The author thanks Prof. Xiao Xiong and Prof. Quanhua Xu for their collaboration on
\emph{Harmonic analysis on twisted crossed products} \cite{xxz26} and for
discussions of the QWEP questions studied here.

The author was supported by the FWO Odysseus 1 grant G.0H94.18N: Analysis and
Partial Differential Equations, and the Methusalem programme of the Ghent
University Special Research Fund (BOF), Grant No.~01M01021.


\begin{thebibliography}{99}
\bibitem{ALT26} V. Alekseev, J. Liu and A. Thom, \emph{Spectral gap, internality of relative commutants, and non-hyperlinear groups}, preprint, 2026, arXiv:2609.37761v1.
\bibitem{BDJ08} N. P. Brown, K. J. Dykema and K. Jung, \emph{Free entropy dimension in amalgamated free products}, Proc. London Math. Soc. (3) \textbf{97} (2008), 339--367.
\bibitem{CF26} S. Chakraborty and F. Flores, \emph{Connes-embeddability and twisted group von Neumann algebras}, preprint, 2026, arXiv:2609.40246v1.
\bibitem{CXY12} Z. Chen, Q. Xu and Z. Yin, \emph{Harmonic analysis on quantum tori}, Comm. Math. Phys. \textbf{322} (2013), 755--805.
\bibitem{DV26} A. Dogon and I. Vigdorovich, \emph{Hyperlinearity, stability and asymptotic spectral gap of higher rank lattices}, preprint, 2026, arXiv:2506.20843v2.
\bibitem{Eckhardt25} C. Eckhardt, \emph{Residually finite amenable groups that are not Hilbert--Schmidt stable}, preprint, 2025, arXiv:2501.07791v2.
\bibitem{EJZ10} M. Ershov and A. Jaikin-Zapirain, \emph{Property \((T)\) for noncommutative universal lattices}, Invent. Math. \textbf{179} (2010), 303--347.
\bibitem{Gonz} A. M. Gonz\'alez-P\'erez, \emph{Crossed-product extensions of $L_p$-bounds for amenable actions}, J. Funct. Anal. \textbf{274} (2018), 2846--2883.
\bibitem{Junge04} M. Junge, \emph{Fubini's theorem for ultraproducts of noncommutative $L_p$-spaces}, Canad. J. Math. \textbf{56} (2004), 983--1021.
\bibitem{KN06} M. Kassabov and N. Nikolov, \emph{Universal lattices and property $\tau$}, Invent. Math. \textbf{165} (2006), 209--224.
\bibitem{KS09} M. Kassabov and M. Sapir, \emph{Nonlinearity of matrix groups}, J. Topol. Anal. \textbf{1} (2009), 251--260.
\bibitem{KT26} G. Kun and A. Thom, \emph{Nonsofic wreath products of residually finite groups}, preprint, 2026, arXiv:2608.06222v3.
\bibitem{KLPV25} B. Kunyavski\u\i, A. Lavrenov, E. Plotkin and N. Vavilov, \emph{Bounded generation of Steinberg groups over Dedekind rings of arithmetic type}, Eur. J. Math. \textbf{11} (2025), Article~23.
\bibitem{Manzoor26} A. Manzoor, \emph{There is an equivalence relation whose von Neumann algebra is not Connes embeddable}, preprint, arXiv:2502.06697v2.
\bibitem{NR11} S. Neuwirth and \'E. Ricard, \emph{Transfer of Fourier multipliers into Schur multipliers and sumsets in a discrete group}, Canad. J. Math. \textbf{63} (2011), 1161--1187.
\bibitem{Ozawa04} N. Ozawa, \emph{About the QWEP conjecture}, Internat. J. Math. \textbf{15} (2004), 501--530.
\bibitem{Pestov08} V. G. Pestov, \emph{Hyperlinear and sofic groups: a brief guide}, Bull. Symbolic Logic \textbf{14} (2008), 449--480.
\bibitem{Peterson10} J. Peterson, answer to \emph{Does a crossed product of the hyperfinite factor of type II$_1$ and a free group have QWEP?}, MathOverflow, 10 July 2010. \href{https://mathoverflow.net/questions/31278}{Question 31278}.
\bibitem{PSV20} S. Popa, D. Shlyakhtenko and S. Vaes, \emph{Classification of regular subalgebras of the hyperfinite II$_1$ factor}, J. Math. Pures Appl. \textbf{140} (2020), 280--308.
\bibitem{Serre80} J.-P. Serre, \emph{Trees}, translated by J. Stillwell, Springer-Verlag, Berlin--New York, 1980.
\bibitem{Sug21} R. Sugawara, \emph{Universal central extensions of linear groups over rings of non-commutative Laurent polynomials, associated $K_1$-groups and $K_2$-groups}, Tsukuba J. Math. \textbf{45} (2021), 13--36.
\bibitem{Thom10} A. Thom, \emph{Examples of hyperlinear groups without factorization property}, Groups Geom. Dyn. \textbf{4} (2010), 195--208.
\bibitem{Thom26} A. Thom, \emph{A conditional construction of a nonhyperlinear group and the centralizer problem}, preprint, 2026, arXiv:2609.27795v1.
\bibitem{xxz26} X. Xiong, Q. Xu, K. Zeng, \emph{Harmonic analysis on twisted crossed products}, work in preparation.
\bibitem{Zeng23} K. Zeng, \emph{Some problems in harmonic analysis on twisted crossed products}, doctoral thesis, Universit\'e de Bourgogne Franche-Comt\'e, 2023.
\bibitem{Zeng26} K. Zeng, \emph{Harmonic Analysis on Twisted Crossed Products}, in M. Ruzhansky and K. Van Bockstal (eds.), \emph{Ghent Analysis and PDE Seminar}, Trends in Mathematics \textbf{18}, Birkh\"auser, Cham, 2026, 107--116.
\end{thebibliography}
\end{document}